\DeclareMathOperator{\id}{id}
\DeclareMathOperator{\cone}{cone}
\DeclareMathOperator{\tr}{tr}
\DeclareMathOperator{\Ch}{Ch}
\DeclareMathOperator{\Sing}{Sing}
\DeclareMathOperator{\proj}{proj}
\DeclareMathOperator{\Hom}{Hom}
\DeclareMathOperator{\Span}{Span}
\DeclareMathOperator{\Map}{Map}
\DeclareMathOperator{\colim}{colim}
\DeclareMathOperator{\N}{N}
\DeclareMathOperator{\dg}{dg}
\DeclareMathOperator{\h}{h}
\DeclareMathOperator{\Ndg}{N_{dg}}
\DeclareMathOperator{\Spanf}{Span^f}
\DeclareMathOperator{\Aut}{Aut}
\DeclareMathOperator{\GL}{GL}
\DeclareMathOperator{\ob}{ob}
\DeclareMathOperator{\iso}{iso}
\renewcommand{\labelenumi}{(\theenumi)}
\theoremstyle{definition}
\numberwithin{equation}{section}
\newtheorem{exa}[equation]{Example}
\newtheorem{defi}[equation]{Definition}
\newtheorem{prop}[equation]{Proposition}
\newtheorem{theo}[equation]{Theorem}
\newtheorem{lem}[equation]{Lemma}
\newtheorem{cor}[equation]{Corollary}
\newtheorem{rem}[equation]{Remark}
\newtheorem{question}[equation]{Question}
\newcommand{\cn}{\langle n \rangle}
\newcommand{\cm}{\langle m \rangle}
\newcommand{\A}{\EuScript{A}}
\def\S{\EuScript{S}}
\newcommand{\X}{\EuScript{X}}
\newcommand{\Y}{\EuScript{Y}}
\newcommand{\Z}{\EuScript{Z}}
\newcommand{\B}{\EuScript{B}}
\newcommand{\C}{\EuScript{C}}
\def\P{\EuScript{P}}
\newcommand{\T}{\EuScript{T}}
\newcommand{\D}{\EuScript{D}}
\newcommand{\F}{\EuScript{F}}
\newcommand{\Zt}{\mathbb Z/(2)}
\newcommand{\one}{\mathbbm 1}
\def\cross{+}
\def\frame{\Box}
\def\square{\boxplus}
\def\Triv{\operatorname{\EuScript{T}riv}}
\def\Split{\operatorname{\EuScript{S}plit}}
\def\GExt{\operatorname{\EuScript{E}xt}}
\def\on{\operatorname}
\def\op{\on{op}}
\def\Ch{\mathbf{Ch}}
\def\k{\mathbf{k}}
\def\Grpd{\mathbf{Grpd}}
\def\Top{\mathbf{Top}}
\def\Vect{\mathbf{Vect}}
\def\dgcat{\mathbf{dgcat}}
\def\Vectone{\mathbf{Vect}_{\mathbb F_1}}
\def\Vectq{\mathbf{Vect}_{\mathbb F_q}}
\def\im{\on{im}}
\def\Ext{\on{Ext}}
\def\Fun{\on{Fun}}
\def\Hall{\on{Hall}}
\def\O{ {\mathcal O}}
\def\ZZ{ {\mathbb Z}}
\def\NN{ {\mathbb N}}
\def\FF{ {\mathbb F}}
\def\Fq{ {\mathbb F}_q}
\def\Fone{ {\mathbb F}_1}
\def\RR{ {\mathbb R}}
\def\QQ{ {\mathbb Q}}
\def\CC{ {\mathbb C}}
\def\lra{ \longrightarrow}
\newcommand{\m}{\frak{m}}
\title{Higher categorical aspects of Hall Algebras}
\author{T. Dyckerhoff}
\begin{document}

\maketitle

\begin{abstract}
	These are extended notes for a series of lectures on Hall algebras given at the CRM
	Barcelona in February 2015. The basic idea of the theory of Hall algebras is that the
	collection of flags in an exact category encodes an associative multiplication law. While
	introduced by Steinitz and Hall for the category of abelian p-groups, it has since become
	clear that the original construction can be applied in much greater generality and admits
	numerous useful variations. These notes focus on higher categorical aspects based on the 
	relation between Hall algebras and Waldhausen's S-construction.
\end{abstract}

\tableofcontents
 
\thispagestyle{empty}

\renewcommand{\theenumi}{\arabic{enumi}}
\renewcommand{\labelenumi}{(\theenumi)}

\section*{Introduction}
\addcontentsline{toc}{section}{Introduction}

While studying the combinatorics of flags
\[
	M = M_0 \supset M_1 \supset \dots \supset M_n = 0
\]
of abelian $p$-groups, Hall \cite{hall} (and, in fact, more than $50$ years earlier
Steinitz \cite{steinitz}) had the striking insight that the numbers obtained by counting
flags in the various abelian $p$-groups $M$ form the structure constants of an associative algebra: 
the Hall algebra. The combinatorics of these structure constants is quite subtle and relates
beautifully to the theory of symmetric functions \cite{macdonald}.

Ringel \cite{ringel-hall} noticed that Hall's construction can be applied to the category of
representations of a quiver over a finite field $\Fq$. He showed that, in the case of a simply laced
Dynkin quiver, the resulting associative algebra realizes the upper triangular part of the quantum
group classified by the graph underlying the quiver. In the same context, Lusztig
\cite{lusztig:quivers} defined a geometric variant of Hall's algebra using perverse 
sheaves on moduli spaces of flags. These groundbreaking discoveries created a surge
of activity providing new perspectives on the theory of quantum groups (see the survey papers \cite{ringel-lectures},
\cite{schiffmann}, \cite{hubery}, \cite{rouquier}).
More recently, a host of variants of Hall algebras have been used by various authors to study
counting invariants: Reineke \cite{reineke-harder,reineke-counting} used Ringel's Hall algebra to
count rational points and compute Betti numbers of quiver moduli, Joyce \cite{joyce-motivic}
introduced motivic Hall algebras to study generalized Donaldson-Thomas invariants,
Kontsevich-Soibelman \cite{kontsevich-soibelman:cohomological} introduce cohomological Hall algebras
for a similar purpose. To\"en \cite{toen} has introduced Hall algebras for derived categories.

The point of view taken in this course is based on the observation that the collections $\S_n$ of
flags of varying length $n \ge 0$ naturally organize into a simplicial object 
\[
	\S_{\bullet} : \Delta^{\op} \lra \D,\; [n] \mapsto \S_n
\]
where the target category $\D$ depends on the context. The above variants of Hall algebras $H$ can
be obtained as various specializations of the simplicial object $\S_{\bullet}$: 
\begin{enumerate}
	\item \label{h1} Ringel's Hall algebra: $\S_{\bullet}$ is a simplicial groupoid and $H$ is obtained
		by passing to functions on isomorphism classes
	\item \label{h2} Lusztig's geometric Hall algebra: $\S_{\bullet}$ is a simplicial stack and $H$ is obtained by passing to
		the Grothendieck group of perverse sheaves 
	\item \label{h3} Joyce's motivic Hall algebra: $\S_{\bullet}$ is a simplicial stack and $H$ is obtained
		by passing to relative Grothendieck groups
	\item \label{h4} Kontsevich-Soibelman's cohomological Hall algebra: $\S_{\bullet}$ is a simplicial stack and $H$ is obtained
		by passing to equivariant cohomology
	\item \label{h5} To\"en's derived Hall algebra: $\S_{\bullet}$ is a simplicial space and $H$ is obtained by passing to
		locally constant functions
\end{enumerate}
Remarkably, the simplicial object $\S_{\bullet}$ also plays a central role in the work of Waldhausen
\cite{waldhausen} on algebraic K-theory where it is simply called the {\em $\S_{\bullet}$-construction}. We
adopt this terminology even though the relation to algebraic K-theory will not play any role in
these notes.\\

The following question will serve as a guide for this course:

\begin{question}\label{question} To what extent can we study Hall algebras ``universally'' in 
terms of the simplicial object $\S_{\bullet}$ without passing to any of the above specializations?
\end{question}
A first answer is that associativity of the Hall algebra can be seen on a universal level: it
corresponds to the fact that $\S_{\bullet}$ satisfies certain natural conditions called 
{\em $2$-Segal conditions} in \cite{dk-segal}.\\

We outline the contents of these notes:\\

In Section \ref{section:classical}, after giving the historical definition of Hall's algebra of
partitions, we define Hall algebras for {\em finitary proto-abelian categories}. This allows us to
treat categories linear over $\Fq$ and categories ``linear over $\Fone$'' on the same footing. Via
two examples, we demonstrate the idea of \cite{szczesny} to interpret, in some cases, the Hall
algebra of an $\Fq$-linear category as a $q$-analog of the Hall algebra of a suitably defined
$\Fone$-linear category. The methods used in this section pay tribute to combinatorial aspects such
as statistics. The main reference is \cite{macdonald}.\\

The remaining sections are devoted to the analysis of Question \ref{question} for various target
categories $\D$:\\

Section \ref{section:groupoids}: $\D = \{ \text{groupoids} \}$. We introduce the simplicial groupoid
$\S_{\bullet}$ of flags in a proto-abelian category $\C$ and establish the $2$-Segal conditions. We
construct from $\S_{\bullet}$ an {\em abstract Hall algebra} in the monoidal category of spans of
groupoids. This is an instance of the Baez-Hoffnung-Walker {\em groupoidification} program
\cite{baez-groupoidification,walker}. We attempt to advertize the benefits of this point of view by
lifting the main part of the proof of Green's theorem \cite{green,ringel-green} into this language,
following proposals of Baez and Kapranov. The Hall algebra of Section \ref{section:classical} can,
for finitary $\C$, be obtained from the abstract Hall algebra by passing to groupoid functions as
indicated in (1). We conclude with the observation that the simplicial $2$-Segal groupoid
$\S_{\bullet}$ encodes structure of higher categorical nature which is lost by passing to functions:
we define {\em Hall monoidal categories} via a construction which fits into Day's \cite{day} theory
of monoidal convolution. For the category $\Vectone$, we recover a classical monoidal category:
Schur's \cite{schur} category of polynomial functors. For $\Vectq$, we obtain Joyal-Street's
\cite{joyal-street} category of representations of the general linear groupoid over a finite
field.\\

Section \ref{section:derived}: $\D = \{ \text{$\infty$-groupoids} \}$. We introduce the Waldhausen
$\S_{\bullet}$-construction of a pretriangulated differential graded category and establish the
$2$-Segal conditions. We explain how to obtain To\"en's derived Hall algebra by passing to locally 
constant functions. This is a reformulation of the results of \cite{toen} using the language of
$\infty$-categories which makes the constructions entirely parallel to the ones of Section
\ref{section:classical}. The material in this section is taken from \cite{dk-segal}.\\ 

Section \ref{section:triangulated}: $\D = \{ \text{differential $\Zt$-graded categories} \}$. One of
the motiviations for formulating Question \ref{question} is as follows: the $2$-Segal simplicial
space $\S_{\bullet}$ from Section \ref{section:derived} can be constructed for any stable
$\infty$-category $\C$ -- finiteness conditions on $\C$ are only needed when passing to functions.
For example, the $\S_{\bullet}$-construction of a $2$-periodic dg category makes perfect sense while
it is not clear how to pass to functions since the finiteness conditions are violated. In this example, we give
an answer to Question \ref{question} which can {\em only} be seen universally: the
$\S_{\bullet}$-construction has a canonical cyclic structure. Observed heuristically in
\cite{dk-segal}, this statement has since been established in \cite{nadler,dk-triangulated,
lurie-cyclic}. 

The cyclic structure should be regarded as a symmetric Frobenius structure on the abstract Hall
algebra of a $2$-periodic dg category. Instead of making this statement precise, we give an
application: using a categorification of the state sum formalism from $2$-dimensional topological
field theory, we obtain invariants of oriented surfaces (\cite{dk-triangulated}). Remarkably, a
universal variant of this construction recovers Kontsevich's construction \cite{kontsevich-fukaya}
of a version of the Fukaya category of a noncompact Riemann surface. 

A nonlinear generalization of the results discussed in this section, and beyond, has been given by Lurie
\cite{lurie-cyclic} who also provides an interpretation in terms of a rotation invariance statement for
algebraic $K$-theory.\\

Throughout this text, we consider categories which are small without explicitly mentioning this.
For example, when we speak about the category of all sets then we really mean the category of $U$-small sets for
a chosen Grothendieck universe which we leave implicit. Other examples of categories like groupoids,
vector spaces, etc, are defined as small categories in a similar way.\\

\noindent
{\bf Acknowledgements.} I would like to thank the organizers of the ``Advanced Course: (Re)emerging
methods in commutative algebra and representation theory'' at the CRM Barcelona for the invitation
to give this lecture series. I thank Mikhail Kapranov for the many inspiring discussions on the
subject. Much of the material presented in these notes originates either directly or indirectly from
our joint work. Many thanks to Pranav Pandit for influential comments and for leading the working
sessions. Further, I would like to thank Joachim Kock for interesting discussions. Last but not
least, I thank Malte Leip for correcting many typos in a first draft of these notes.

\newpage
\section{Classical Hall algebras}
\label{section:classical}

\subsection{Hall's algebra of partitions}

We outline the original context in which Hall algebras first appeared. Let $p$ be
a prime number, and let $M$ be a finite abelian $p$-group. By the classification theorem for
finitely generated abelian groups the group $M$ decomposes into a direct sum of cyclic $p$-groups.
Therefore, we have
\[
	M \cong \bigoplus_{i = 1}^{r} \ZZ/(p^{\lambda_i})
\]
where we may assume $\lambda_1 \ge \lambda_2 \ge \dots \ge \lambda_r$ so that the sequence
\[
	\lambda = (\lambda_1, \lambda_2, \dots, \lambda_r, 0, \dots)
\]
is a {\em partition}, i.e., a weakly decreasing sequence of natural numbers with finitely many
nonzero components. We call the partition $\lambda$ the {\em type} of $M$. 
The association
\[
	M \mapsto \text{type of $M$}
\]
provides a bijective correspondence between isomorphism classes of finite abelian $p$-groups 
and partitions.

Given partitions $\mu^{(1)}, \mu^{(2)}, \dots, \mu^{(s)}, \lambda$, we define
\[
	g_{\mu^{(1)} \mu^{(2)} \dots \mu^{(s)}}^{\lambda}(p)
\]
to be the number of flags
\[
	M = M_0 \supset M_1 \supset \dots \supset M_{s-1} \supset M_{s} = 0
\]
such that $M_{i-1}/M_i$ has type $\mu^{(i)}$ where $M$ is a fixed group of type $\lambda$. In this
context, Hall had the following insight:

\begin{theo}\label{theo:hall1} The numbers $g_{\mu \nu}^\lambda(p)$ form the structure constants of a unital
	associative algebra with basis $\{u_\lambda\}$ labelled by the set of all partitions. 
	More precisely, the $\ZZ$-linear extension of the formula
	\[
		u_{\mu} u_{\nu} = \sum_{\lambda} g_{\mu \nu}^\lambda(p) u_{\lambda}
	\]
	defines a unital associative multiplication on the abelian group $\bigoplus_{\lambda} \ZZ
	u_{\lambda}$. 
\end{theo}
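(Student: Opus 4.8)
The plan is to derive unitality and associativity directly from the flag-counting numbers, making essential use of the higher structure constants $g_{\mu^{(1)}\cdots\mu^{(s)}}^\lambda(p)$ for flags of length $s$ that are already at our disposal. First I would record two preliminary points. Since any two finite abelian $p$-groups of the same type are isomorphic, and an isomorphism carries flags to flags preserving the types of all successive quotients, the number $g_{\mu\nu}^\lambda(p)$ depends only on $\lambda$ and not on the chosen representative $M$. Moreover, a flag $M \supset M_1 \supset 0$ with $M/M_1$ of type $\mu$ and $M_1$ of type $\nu$ forces $|M| = p^{|\mu|+|\nu|}$, where $|\mu|$ denotes the size of the partition $\mu$; hence $g_{\mu\nu}^\lambda(p) = 0$ unless $|\lambda| = |\mu| + |\nu|$. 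As only finitely many partitions have a given size, the sum $\sum_\lambda g_{\mu\nu}^\lambda(p)\, u_\lambda$ is finite and the product is well defined.

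For the unit I would take $u_0$, where $0$ is the type of the trivial group. A flag $M \supset M_1 \supset 0$ with $M/M_1$ of type $0$ forces $M_1 = M$, so that $g_{0\nu}^\lambda(p) = \delta_{\lambda\nu}$; symmetrically, a trivial bottom quotient forces $M_1 = 0$ and $g_{\nu 0}^\lambda(p) = \delta_{\lambda\nu}$. Thus $u_0 u_\nu = u_\nu = u_\nu u_0$, and $u_0$ is a two-sided unit.

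The heart of the matter is associativity. Expanding both triple products, the coefficient of $u_\lambda$ in $(u_\mu u_\nu)u_\rho$ is $\sum_\sigma g_{\mu\nu}^\sigma(p)\, g_{\sigma\rho}^\lambda(p)$, while in $u_\mu(u_\nu u_\rho)$ it is $\sum_\tau g_{\nu\rho}^\tau(p)\, g_{\mu\tau}^\lambda(p)$. I claim both equal the length-three structure constant $g_{\mu\nu\rho}^\lambda(p)$. For the left-hand sum, fix $M$ of type $\lambda$: a term records a subgroup $M_2 \subseteq M$ with $M_2$ of type $\rho$ and $M/M_2$ of type $\sigma$, together, via the correspondence theorem, with a subgroup $M_1$ satisfying $M_2 \subseteq M_1 \subseteq M$, $M_1/M_2$ of type $\nu$, and $M/M_1$ of type $\mu$. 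The type $\sigma$ of $M/M_2$ is determined by the resulting flag $M \supset M_1 \supset M_2 \supset 0$, so summing over $\sigma$ sets up a bijection with length-three flags having successive quotient types $\mu, \nu, \rho$; this yields $g_{\mu\nu\rho}^\lambda(p)$. An entirely symmetric argument---now cutting off the top quotient $M/M_1$ of type $\mu$ first and refining the subgroup $M_1$ of type $\tau$---identifies the right-hand sum with the same number. Hence the two coefficients agree and the multiplication is associative. The only real care needed is bookkeeping: matching each quotient-type label to the index convention for the $\mu^{(i)}$ and verifying that the refinement/forgetting assignment is genuinely bijective. This cutting-and-refining of flags is precisely the combinatorial shadow of the $2$-Segal conditions advertised in the introduction.
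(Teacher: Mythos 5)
Your proposal is correct and follows exactly the strategy the paper indicates: showing that both bracketings of a triple product compute the length-three structure constant $g_{\mu\nu\rho}^{\lambda}(p)$, via the correspondence-theorem bijection between pairs (subgroup, flag in the quotient or sub) and length-three flags. The paper leaves these details as an exercise (deferring the general argument to Section 2), and your write-up supplies precisely the missing bookkeeping, together with the unitality and finiteness checks.
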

\begin{proof} One shows that the product 
	\[
		u_{\mu} u_{\nu} u_{\lambda},
	\] 
	with any chosen bracketing, is equal to 
	\[
		\sum_{\pi} g_{\mu \nu \lambda}^{\pi}(p) u_{\pi}. 
	\]
	We will provide a proof of this statement in greater generality in Section
	\ref{section:groupoids} so that, at
	this point, we leave the details as an exercise.
\end{proof}

The resulting associative algebra is called {\em Hall's algebra of partitions}.
We compute some examples of products $u_{\mu} u_{\nu}$. Fixing an abelian $p$-group $M$ of type
$\lambda$, the number $g_{\mu \nu}^{\lambda}(p)$
is the number of subgroups $N \subset M$ such that $N$ has type $\nu$ and $M/N$ has type $\mu$.
In particular, we obtain that $g_{\mu \nu}^{\lambda}(p)$ is nonzero if and only if $M$ is an
extension of a $p$-group $N'$ of type $\mu$ by a $p$-group $N$ of type $\nu$.
\begin{enumerate}
	\item We compute 
		\[
			u_{(1)} u_{(1)} = g_{(1)(1)}^{(1,1)}(p) u_{(1,1)} + g_{(1)(1)}^{(2)}(p) u_{(2)}.
		\]
		Further, $g_{(1)(1)}^{(1,1)}(p)$ is the number of subgroups 
		\[
			N \subset M = \ZZ/(p) \oplus \ZZ/(p) 
		\]
		such that $N \cong \ZZ/(p)$ and $M/N \cong \ZZ/(p)$. This coincides with the number
		of $1$-dimensional subspaces in the $\FF_p$-vector space $(\FF_p)^2$ (here $\FF_p$
		denotes the field with $p$ elements) of which there are $p+1$. The number
		$g_{(1)(1)}^{(2)}(p)$ is the number of subgroups
		\[
			N \subset M = \ZZ/(p^2)
		\]
		such that $N \cong \ZZ/(p)$ and $M/N \cong \ZZ/(p)$. Any such $N$ must lie in the
		$p$-torsion subgroup of $M$ which is $p \ZZ/(p^2)$. But $p \ZZ/(p^2) \cong \ZZ/(p)$
		and so $N = p\ZZ/(p^2)$ which implies $g_{(1)(1)}^{(2)}(p) = 1$. In conclusion, we have
		\[
			u_{(1)} u_{(1)} = (p+1) u_{(1,1)} + u_{(2)}.
		\]
	\item We compute 
		\[
			u_{(1,1)} u_{(1)} = g_{(1,1)(1)}^{(1,1,1)}(p) u_{(1,1,1)} +
			g_{(1,1)(1)}^{(2,1)}(p) u_{(2,1)}.
		\]
		 To compute $g_{(1,1)(1)}^{(1,1,1)}(p)$ we have to determine the number of subgroups
		 \[
			 N \subset M = \ZZ/(p) \oplus \ZZ/(p) \oplus \ZZ/(p)
		 \]
		 such that $N \cong \ZZ/(p)$ and $M/N \cong \ZZ/(p) \oplus \ZZ/(p)$. This is
		 equivalent to counting $1$-dimensional subspaces of $\FF_p^3$ of which there are
		 $p^2 + p + 1$. Further, the number $g_{(1,1)(1)}^{(2,1)}(p)$ is the number of
		 subgroups
		 \[
			 N \subset M = \ZZ/(p^2) \oplus \ZZ/(p)
		 \]
		 such that $N \cong \ZZ/(p)$ and $M/N \cong \ZZ/(p) \oplus \ZZ/(p)$. As above, the
		 subgroup $N$ must be contained in the $p$-torsion subgroup of $M$ which is
		 $p\ZZ/(p^2) \oplus \ZZ/(p)$. There are $p+1$ such subgroups, but only one of them satisfies the
		 condition $M/N \cong \ZZ/(p) \oplus \ZZ/(p)$: $N = p \ZZ/(p^2)$, contained in the
		 first summand of $M$. Therefore, we have $g_{(1,1)(1)}^{(2,1)}(p) = 1$ so that 
		\[
			u_{(1,1)} u_{(1)} = (p^2 + p + 1) u_{(1,1,1)} + u_{(2,1)}.
		\]
\end{enumerate}

\subsection{Proto-abelian categories}

Our goal in this section will be to introduce a certain class of categories to which Hall's
associative multiplication law can be generalized.

\begin{defi}\label{defi:proto} A category $\C$ is called {\em proto-abelian} if the following conditions hold.
	\begin{enumerate}
		\item The category $\C$ is pointed. 
		\item 
	\begin{enumerate}
		\item Every diagram in $\C$ of the form
		\[
			\xymatrix{
				A \ar@{^{(}->}[r] \ar@{->>}[d] & B\\
				C  & 
			}
		\]
		can be completed to a pushout square of the form
		\[
			\xymatrix{
				A \ar@{^{(}->}[r] \ar@{->>}[d] & B\ar@{->>}[d] \\
				C \ar@{^{(}->}[r] &  D.
			}
		\]
		\item Every diagram in $\C$ of the form
		\[
			\xymatrix{
					& B \ar@{->>}[d]\\
				C \ar@{^{(}->}[r]  & D
			}
		\]
		can be completed to a pullback square of the form
		\[
			\xymatrix{
				A \ar@{^{(}->}[r] \ar@{->>}[d] & B\ar@{->>}[d] \\
				C \ar@{^{(}->}[r] &  D.
			}
		\]
		\end{enumerate}
		\item A commutative square in $\C$ of the form
		\[
			\xymatrix{
				A \ar@{^{(}->}[r] \ar@{->>}[d] & B\ar@{->>}[d] \\
				C \ar@{^{(}->}[r] &  D
			}
		\]
		is a pushout square if and only if it is a pullback square. We also call such a square
		{\em biCartesian}.
\end{enumerate}
\end{defi}

\begin{exa} A pushout diagram of the form
		\[
			\xymatrix{
				A \ar@{^{(}->}[r] \ar@{->>}[d] & B\ar@{->>}[d] \\
				0 \ar@{^{(}->}[r] &  A'
			}
		\]
		is called a {\em short exact sequence}, or, an {\em extension of $A'$ by $A$}.
\end{exa}

\begin{exa} 
	\begin{enumerate}
		\item Abelian categories are proto-abelian. 
		\item We introduce a category $\Vectone$ of {\em finite dimensional vector spaces
			over $\Fone$} as follows. An object of $\Vectone$ is a
			finite set $K$ equipped with a marked point $\ast \in K$ (a {\em pointed set}). A morphism $(K,\ast) \to
			(L,\ast)$ is a map $f: K \to L$ of underlying sets such that $f(\ast) = \ast$ and the restriction
			$f|_{K \setminus f^{-1}(\ast)}$ is injective. The category $\Vectone$ is
			proto-abelian.
		\item Let $\C$ be a proto-abelian category. Then the opposite category $\C^{\op}$ is
			proto-abelian. Given a category $I$, the category $\Fun(I, \C)$ of $I$-diagrams in
			$\C$ is proto-abelian.
	\end{enumerate}
\end{exa}

To define the Hall algebra of a proto-abelian category $\C$, we have to impose additional
finiteness conditions.
Two extensions $A \hookrightarrow B \twoheadrightarrow A'$
and $A \hookrightarrow C \twoheadrightarrow A'$ of $A'$ by $A$ are called equivalent if there exists a
commutative diagram
\[
	\xymatrix{ A \ar[d]_{\id} \ar@{^{(}->}[r] & B\ar[d]^{\cong} \ar@{->>}[r]& A' \ar[d]^{\id}\\
	A\ar@{^{(}->}[r] & C \ar@{->>}[r]& A'.}
\]
We denote by $\Ext_{\C}(A',A)$ the set of equivalence classes of extensions of $A'$ by $A$. 

\begin{defi}\label{defi:finitary} A proto-abelian category $\C$ is called {\em finitary} if, for every pair of objects
	$A$,$A'$, the sets $\Hom_{\C}(A',A)$ and $\Ext_{\C}(A',A)$ have finite cardinality.
\end{defi}

\begin{theo} Let $\C$ be a finitary proto-abelian category. Consider the free abelian group 
	\[
		\Hall(\C) = \bigoplus_{[M] \in \iso(\C)} \ZZ [M]
	\]
	on the set of isomorphism classes of objects in $\C$. Then the
	bilinear extension of the formula
	\[
		[N] \cdot [L] = \sum_{[M] \in \iso(\C)} g^M_{N,L} [M]
	\]
	where $g^M_{N,L}$ denotes the number of subobjects $A \subset M$ such that $A \cong L$ and
	$M/A \cong N$ defines a unital associative multiplication law on $\Hall(\C)$.  
\end{theo}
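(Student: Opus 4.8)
The plan is to prove associativity by showing that both bracketings of a triple product $([N]\cdot[L])\cdot[P]$ and $[N]\cdot([L]\cdot[P])$ compute the same structure constants, namely the numbers counting length-three flags, exactly as indicated in the sketch for Theorem \ref{theo:hall1}. Concretely, I would introduce the triple structure constant
\[
	g^M_{N,L,P} = \#\{\, A \subset B \subset M : B/A \cong L,\ M/B \cong N,\ A \cong P \,\},
\]
and aim to establish that
\[
	([N]\cdot[L])\cdot[P] = \sum_{[M]} g^M_{N,L,P}\,[M] = [N]\cdot([L]\cdot[P]),
\]
which gives associativity once the common value is identified. The unit is the class $[0]$ of the zero object, and checking $[0]\cdot[M]=[M]=[M]\cdot[0]$ is routine from the definition of $g$, so the real content is associativity.

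First I would expand the left-hand bracketing. By definition $([N]\cdot[L])\cdot[P] = \sum_{[M']} g^{M'}_{N,L}\,[M']\cdot[P] = \sum_{[M]}\bigl(\sum_{[M']} g^{M'}_{N,L}\, g^{M}_{M',P}\bigr)[M]$. The inner sum counts pairs consisting of a subobject $A \subset M$ with $A \cong P$ and $M/A \cong M'$, together with a subobject of $M'$ isomorphic to $P$... more precisely a subobject $B/A \subset M/A \cong M'$ realizing the type $L$ with quotient $N$. The key combinatorial step is to reinterpret this double count as a count of flags $A \subset B \subset M$ in $M$ directly: a subobject $A \cong P$ together with a subobject of $M/A$ of type $L$ with quotient $N$ corresponds, under the subobject-correspondence $B \leftrightarrow B/A$, to a subobject $B$ with $A \subset B$, $B/A \cong L$, and $M/B \cong N$. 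This correspondence is exactly where the proto-abelian axioms enter: I need that subobjects of the quotient $M/A$ are in bijection with subobjects $B$ of $M$ containing $A$, with matching quotients $M/B \cong (M/A)/(B/A)$, which follows from the existence and biCartesian characterization of pushout/pullback squares in Definition \ref{defi:proto}. Summing over $[M']$ collapses the intermediate isomorphism type and yields $\sum_{[M]} g^M_{N,L,P}[M]$.

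By a symmetric argument I would expand the right-hand bracketing $[N]\cdot([L]\cdot[P])$. Here $([L]\cdot[P]) = \sum_{[B]} g^B_{L,P}[B]$ counts subobjects $A \subset B$ with $A \cong P$ and $B/A \cong L$, and then $[N]\cdot[B]$ counts subobjects $B \subset M$ with $M/B \cong N$. The same flag reinterpretation shows this equals $\sum_{[M]} g^M_{N,L,P}[M]$, since a subobject $B$ of $M$ of a given isomorphism type equipped with an internal subobject $A$ of type $P$ and $B/A \cong L$ is precisely the data of a flag $A \subset B \subset M$ with the prescribed subquotient types. Thus both bracketings agree with the triple count.

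The main obstacle is the flag-counting bijection in a general finitary proto-abelian category, where we cannot freely invoke the standard lattice-isomorphism theorems of abelian categories. I would have to justify carefully that the assignment $B \mapsto B/A$ gives a well-defined bijection between subobjects $B$ with $A \subset B \subset M$ and subobjects of $M/A$, that this bijection preserves the relevant isomorphism types of subquotients, and that equivalence classes of the intermediate extensions are counted correctly. This rests on the axioms that every admissible mono/epi pair completes to a biCartesian square and that pushout and pullback squares coincide, which let me construct quotients $M/A$, identify $M/B$ with $(M/A)/(B/A)$, and pass between the two bracketings. I expect that the cleanest route is to phrase this at the level of the simplicial groupoid $\S_\bullet$ of flags and the $2$-Segal conditions, deferring the detailed verification to the general treatment promised in Section \ref{section:groupoids}; the essential point is that the $2$-Segal property of $\S_\bullet$ is precisely the categorical shadow of the associativity being proved here.
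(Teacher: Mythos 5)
Your proposal is correct and follows essentially the same route as the paper: the paper's proof is a one-line deferral to the argument sketched for Theorem \ref{theo:hall1}, namely that both bracketings of a triple product compute the flag-counting constants $g^M_{N,L,P}$, with the details postponed to the $2$-Segal treatment of $\S_{\bullet}$ in Section \ref{section:groupoids}. You in fact supply more detail than the paper does, correctly identifying the subobject correspondence $B \leftrightarrow B/A$ (justified by the biCartesian axioms of Definition \ref{defi:proto}) as the point where the proto-abelian structure is used.
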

\begin{proof} The axioms for a proto-abelian category are chosen so that the argument for Theorem
	\ref{theo:hall1} generalizes verbatim.
\end{proof}

\subsection{A first example} 
\label{sec:first}

\subsubsection{The categories $\Vectq$ and $\Vectone$}

Let $\Vectq$ denote the category of finite dimensional vector spaces over the field $\Fq$
where $q$ is some prime power. The category $\Vectq$ is finitary proto-abelian, and we have
\[
	\Hall(\Vectq) = \bigoplus_{n \in \NN} \ZZ [\Fq^n]
\]
with multiplication given by 
\[
	[\Fq^n][\Fq^m] = g_{n,m}^{n+m}(q) [\Fq^{n+m}]
\]
where
\begin{align*}
	g_{n,m}^{n+m}(q) 
	& = |\{ V \subset \Fq^{n+m}\; | \; V \cong \Fq^m, \Fq^{n+m}/V \cong \Fq^n \}|\\
	& = \frac{|\{ \Fq^m \hookrightarrow \Fq^{n+m} \}|}{|\GL_n(\Fq)|}\\
	& = \frac{(q^{n+m} - 1)(q^{n+m}-q) \cdots (q^{n+m}-q^{m-1})}{(q^{m} - 1)(q^{m}-q) \cdots (q^{m}-q^{m-1})}\\
	& = \frac{(q^{n+m} - 1)(q^{n+m-1}-1) \cdots (q^{n+1}-1)}{(q^{m} - 1)(q^{m-1}-1) \cdots (q-1)}\\
	& = \left[ \begin{array}{c} n+m\\ m \end{array} \right]_q.
\end{align*}
The symbol appearing in the last line of the calculation denotes a $q$-binomial coefficient which is
defined as follows: We first define the $q$-analog of a natural number $n \in \NN$ to be
\[
	[n]_q = \frac{q^n - 1}{q-1} = 1 + q + q^2 + \dots + q^{n-1}.
\]
The $q$-factorial of $n$ is defined as
\[
	[n]_q! = [n]_q [n-1]_q \dots [1]_q
\]
and we finally set
\[
	\left[ \begin{array}{c} n+m\\ m \end{array} \right]_q = \frac{[n+m]_q!}{[m]_q! [n]_q!}.
\]
We have an isomorphism of $\ZZ$-algebras
\begin{align*}
	\Hall(\Vectq) & \overset{\cong}{\lra} \ZZ[x,\frac{x^2}{[2]_q!},\frac{x^3}{[3]_q!}, \dots]
	\subset \QQ[x],\\
	[\Fq^n] & \mapsto \frac{x^n}{[n]_q!}.
\end{align*}
On the other hand, consider the category $\Vectone$ which is easily seen to be finitary and
proto-abelian. We have 
\[
	\Hall(\Vectone) = \bigoplus_{n \in \NN} \ZZ [\{\ast, 1,2,\dots,n\}]
\]
with multiplication
\[
	[\{\ast, 1,2,\dots,n\}][\{\ast, 1,2,\dots,m\}] = \lambda_{n,m}^{n+m} [\{\ast, 1,2,\dots,n+m\}]
\]
where 
\begin{align*}
	\lambda_{n,m}^{n+m} 
	& = \frac{|\{ \{\ast, 1,2,\dots,m\} \hookrightarrow \{\ast, 1,2,\dots,n+m\}\}|}{|S_m|}\\
	& = \left( \begin{array}{c} n+m\\ m \end{array} \right).
\end{align*}
We have an isomorphism of algebras
\begin{align*}
	\Hall(\Vectone) & \overset{\cong}{\lra} \ZZ[x,\frac{x^2}{2!},\frac{x^3}{3!}, \dots]
	\subset \QQ[x],\\
	[\Fone^n] & \mapsto \frac{x^n}{n!}.
\end{align*}
Therefore, the algebra $\Hall(\Vectone)$ is the free divided power algebra on one generator.
Note that, interpreting $q$ as a formal variable, we have 
\begin{equation}\label{eq:q-analog}
		\lambda_{n,m}^{n+m} = g_{n,m}^{n+m}(1)
\end{equation}
so that $\Hall(\Vectq)$ can be regarded as a $q$-analog of $\Hall(\Vectone)$. 

\begin{rem} We can give a natural explanation for the commutativity of both Hall algebras computed
	in this section: Assume that a finitary proto-abelian category $\C$ is equipped with an exact
	equivalence $D: \C^{\op} \to \C$ such that, for every object $M$, we have $D(M) \cong M$.
	Then $\Hall(\C)$ is a commutative algebra. Now we have:
	\begin{enumerate}
		\item The category $\Vectq$ is equipped with the exact duality $V \mapsto V^* = \Hom_{\Vectq}(V,
			\Fq)$ which satisfies $V^* \cong V$.
		\item The category $\Vectone$ is equipped with the exact duality $K \mapsto K^* =
			\Hom_{\Vectone}(K,\{1,\ast\})$. Curiously, in contrast to $\Vectq$, the dual $K^*$
			can be canonically identified with $K$.
	\end{enumerate}
\end{rem}

\subsubsection{Statistical interpretation of $q$-analogs}
\label{subsec:statistic}

We introduce terminology to discuss the phenomenon of equation \eqref{eq:q-analog} somewhat
more systematically.  
Let $S$ be a finite set. A {\em statistic} on $S$ is a function
\[
	f: S \lra \NN.
\]
Given a statistic $f$, we define the corresponding {\em partition function} to be
\[
	Z(q) = \sum_{s \in S} q^{f(s)}.
\]

\begin{rem}
Evaluation of the partition function at $q=1$ yields the cardinality of the set $S$ so that $Z(q)$
can be interpreted as a $q$-analog of $|S|$. Note that, any $q$-analog obtained in this way from a statistic will
therefore, by construction, be polynomial in $q$.
\end{rem}

\begin{exa}
	\begin{enumerate}
\item Consider the set $S = \{1,\dots,n\}$. We define a statistic on $S$ via
	\[
		f: S \lra \NN,\; i \mapsto i-1.
	\]
	The corresponding partition function is
	\[
		Z(q) = 1 + q + \dots + q^{n-1} = [n]_q.
	\]
\item Consider the set $S_n$ underlying the symmetric group on $n$ letters. We define the {\em
	inversion statistic} on $S_n$ as
	\[
		\on{inv}: S_n \lra \NN,\; \sigma = \left( \begin{array}{cccc} 1 & 2 & \dots & n\\ \sigma_1 &
			\sigma_2 & \dots & \sigma_n \end{array} \right) \mapsto |\{ (i,j)\; |\; i
			<j, \; \sigma_i > \sigma_j \}|
	\]
	We claim that we have
	\[
		\sum_{\sigma \in S_n} q^{\on{inv}(\sigma)} = [n]_q!.
	\]
	To show this, we interpret the summands in the expansion of the product
	\[
		[n]_q! = 1 (1+q) (1+q+q^2) \dots (1 + q + \dots + q^{n-1}).
	\]
	Given a summand $q^a$ of the product, it must arise as a product $q^{i_1}q^{i_2}\dots
	q^{i_n}$ with $0 \le i_k \le k-1$. We produce a corresponding permutation $\sigma$ by providing an
	algorithm to write the list $\sigma_1, \dots, \sigma_n$. In step $1$, we start by writing the number $1$. In step $2$, we write the number $2$ to the
	left of $1$ if $i_2 = 1$ or to the right of $1$ if $i_2 = 0$. At step $k$, there will be
	$k-1$ numbers and we label the gaps between the numbers by $0, \dots, k-1$ from right to left. We
	fill in the number $k$ into the gap with label $i_k$. This algorithm produces a permutation
	$\sigma$ with $\on{inv}(\sigma) = a$. The claim follows immediately from this construction.
\end{enumerate}
\end{exa}

Finally, we would like to find a statistical interpretation of the $q$-binomial coefficient. We will
achieve this by defining a statistic on the set $P(m,n+m)$ of subsets of $\{1,\dots,n+m\}$ of
cardinality $m$. A lattice path in the rectangle of size $(n,m)$ is path in $\RR^2$ which begins at
$(0,0)$, ends at $(n,m)$, and is obtained by sequence of steps moving either one integer step to the
east or to the north. Given $K \in P(m,n+m)$, we can construct a lattice path by the following rule:
at step $i$, we move east if $i \notin K$, and north if $i \in K$. It is immediate that this
construction provides a bijective correspondence between $P(m,n+m)$ and lattice paths in the
rectangle of size $(n,m)$. We now define the statistic as
\[
	a: P(m,n+m) \lra \NN, \; K \mapsto a(K)
\]
where $a(K)$ denotes the area of the part of the rectangle of size $(n,m)$ which lies above the
lattice path corresponding to $K$.

\begin{prop} We have 
	\[
		\sum_{K \in P(m,n+m)} q^{a(K)} = \left[ \begin{array}{c} n+m\\ m \end{array} \right]_q.
	\]
\end{prop}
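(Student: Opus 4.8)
The plan is to prove the identity by induction, using the standard Pascal-type recurrence for $q$-binomial coefficients together with a geometric decomposition of the set of lattice paths. First I would record the recurrence
\[
	\left[ \begin{array}{c} n+m\\ m \end{array} \right]_q = \left[ \begin{array}{c} n+m-1\\ m-1 \end{array} \right]_q + q^{m}\left[ \begin{array}{c} n+m-1\\ m \end{array} \right]_q,
\]
which follows immediately from the defining formula in terms of $q$-factorials by clearing denominators. The strategy is then to show that the generating function $Z_{n,m}(q) = \sum_{K \in P(m,n+m)} q^{a(K)}$ satisfies the same recurrence and the same base cases.

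Next I would set up the geometric decomposition matching this recurrence. Every lattice path from $(0,0)$ to $(n,m)$ ends with either a north step or an east step. Partitioning $P(m,n+m)$ according to the direction of the \emph{last} step gives the two summands. If the last step is north, deleting it yields a path from $(0,0)$ to $(n,m-1)$; the area above the path is unchanged, so these paths contribute $Z_{n,m-1}(q)$, corresponding to the first term. If the last step is east, deleting it yields a path from $(0,0)$ to $(n-1,m)$, but removing the final east step at height $m$ strips off a rectangular strip of area exactly $m$ lying above it; hence the area decreases by $m$, and these paths contribute $q^{m} Z_{n-1,m}(q)$, corresponding to the second term. (One should verify the area bookkeeping carefully in each case, since this is where a sign or off-by-one error would creep in; I regard this as the main obstacle, though it is a routine check once the picture is drawn.)

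Finally I would dispatch the base cases. When $m = 0$ (or symmetrically $n = 0$) there is a unique lattice path, hugging the boundary, whose area above it is $0$, so $Z_{n,0}(q) = 1 = \left[ \begin{array}{c} n\\ 0 \end{array} \right]_q$. Since $Z_{n,m}(q)$ and the $q$-binomial coefficient satisfy the same recurrence and agree on the boundary of the $(n,m)$-grid, induction on $n+m$ yields their equality for all $n,m \ge 0$, completing the proof. An alternative, more conceptual route would be to expand the product $[n+m]_q!/([m]_q![n]_q!)$ directly and match monomials $q^{a}$ with lattice paths enclosing area $a$ via the standard bijection between Young diagrams fitting in the $(n,m)$-box and such paths; but the inductive argument above is cleaner and avoids an explicit combinatorial bijection.
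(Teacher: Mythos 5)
Your overall strategy is the same as the paper's: show that the lattice-path generating function and the $q$-binomial coefficient satisfy the same Pascal-type recurrence and agree on the boundary. The gap is in the area bookkeeping, exactly where you flagged the risk: with $a(K)$ the area \emph{above} the path, your two cases are swapped. Note that $a(K)$ equals the sum of the horizontal positions $x$ of the north steps, or equivalently $\sum (m-y)$ over east steps at height $y$. If the last step is north it sits at $x=n$ and contributes $n$, so deleting it \emph{decreases} the area above by $n$ (each of the $n$ columns loses its top unit cell when the rectangle drops from height $m$ to $m-1$); the area is not unchanged. If the last step is east it sits at height $m$, so the strip above it has area $m-m=0$ and deleting it leaves the area above unchanged; the strip of area $m$ that you strip off lies \emph{below} that step. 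Hence the last-step decomposition actually yields
\[
	Z_{n,m}(q) \;=\; q^{n}\, Z_{n,m-1}(q) \;+\; Z_{n-1,m}(q),
\]
which is the recursion the paper verifies against the companion $q$-Pascal identity
$\left[ \begin{array}{c} n+m\\ m \end{array} \right]_q = q^{n}\left[ \begin{array}{c} n+m-1\\ m-1 \end{array} \right]_q + \left[ \begin{array}{c} n+m-1\\ m \end{array} \right]_q$,
not the recurrence you wrote down. What your case analysis literally establishes is the recurrence for the area \emph{below} the path.

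The fix is one line: either decompose by the \emph{first} step instead of the last (a first north step is at $x=0$ and contributes nothing to the area above, while a first east step is at height $0$ and contributes exactly $m$ --- this is precisely your bookkeeping and matches your recurrence $Z_{n,m}=Z_{n,m-1}+q^{m}Z_{n-1,m}$), or keep the last-step decomposition and check the other $q$-Pascal identity, as the paper does. As written, the recurrence you state does happen to hold for the area-above statistic as well, but only because area-above and area-below are equidistributed (rotate the rectangle by $180^{\circ}$), an argument you have not supplied. Your base cases are fine.
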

\begin{proof} We prove this equality by showing that both sides satisfy the recursion
	\[
		Q(m,n)(q) = q^n Q(m-1,n)(q) + Q(m,n-1).
	\]
	On the left hand side, the term $q^n Q(m-1,n)(q)$ is the contribution from subsets $K$ such
	that $n+m \in K$, the term $Q(m,n-1)$ is the contribution from subsets such that $n+m \notin
	K$. The right hand side satisfies the recursion by a straightforward calculation.
\end{proof}

In conclusion, we obtain that the structure constants of the Hall algebra of $\Vectq$ have a
statistical interpretation and are hence polynomial in $q$. The value at $q = 1$ is realized by the
structure constants of the Hall algebra of $\Vectone$.

The phenomenon observed seems to be a general feature: the Hall algebra of a finitary $\FF_q$-linear
abelian category often describes a $q$-analog of an algebra of more classical nature obtained by
specializing at $q=1$. The idea to describe this latter algebra as the Hall algebra of a
category of combinatorial nature is quite recent and due to M. Szczesny
\cite{szczesny}. 

In the next section, we will discuss this phenomenon in the classical context:
We will see that Hall's algebra of partitions is isomorphic to the ring $\Lambda$ of symmetric functions
and equips it with an interesting $\ZZ$-basis given, up to some renormalization, by the so-called
{\em Hall-Littlewood symmetric functions}. This basis is a $q$-analog of the classical basis of
$\Lambda$ given by the {\em monomial symmetric functions} which can be obtained via the Hall
algebra of a proto-abelian category of combinatorial nature: the category of vector spaces over
$\FF_1$ equipped with a nilpotent endomorphism. 

\subsection{Hall's algebra of partitions and symmetric functions}

Let $R$ be a discrete valuation ring with (a principal ideal domain with exactly one nonzero
maximal ideal $\m$) with residue field of finite cardinality $q$. It turns out that the Hall algebra 
of the category of finite $R$-modules only depends on $q$. For the $p$-adic integers $\ZZ_p$,
we recover the category of abelian $p$-groups and thus Hall's classical algebra of partitions.
For $R = \FF_q[ [t]]$ we obtain another construction of the same Hall algebra.

We will analyze the algebra $\Hall(\FF_q[ [t]])$ by establishing an interpretation similar to the one
provided in Section \ref{sec:first}: The case $q=1$ corresponds to the Hall algebra of a
proto-abelian category of combinatorial nature and $\Hall(R)$ is studied as a $q$-analog by
giving a statistical interpretation of (some of) its structure constants.

\subsubsection{The Hall algebra of $\FF_1[ [t]]$} 
\label{subsec:hallf1}

\begin{defi}
A {\em finite $\FF_1[ [t]]$-module} is an object of $\Vectone$ equipped with a nilpotent endomorphism.  
\end{defi}

The isomorphism classes of finite $\FF_1[ [t]]$-modules are naturally labelled by
the set of all partitions: a pointed set $K$ equipped with a nilpotent endomorphism $T$, 
corresponds to rooted tree with vertices given by the elements of $K$ and an edge from $k$ to $k'$
if $T(k) = k'$. We obtain a partition by reordering the tupel of lengths of the branches.

\begin{exa} The finite $\FF_1[ [t]]$-module corresponding to the partition $(3,2,2,1,0,\dots)$ is
	represented by the rooted tree
	\[
		\xymatrix{ & & \ast &  \\
			\bullet \ar[urr] & \bullet \ar[ur] & \bullet \ar[u] & \bullet \ar[ul]. \\
			\bullet \ar[u] & \bullet \ar[u] & \bullet \ar[u] &  \\
		\bullet \ar[u] &  &  &  }
	\]
\end{exa}

We write  
\[
	\Hall(\FF_1[ [t]]) = \bigoplus_{\lambda} \ZZ u_\lambda
\]
for the Hall algebra of the category of finite $\FF_1[ [t]]$-modules where $\lambda$ runs over all
partitions.

\begin{exa} An example of a short exact sequence of finite $\FF_1[ [t]]$-modules is given by 
	\[
		\xymatrix{ & & \ast &  \\
			\bullet \ar[urr] & \bullet \ar[ur] &  & \bullet \ar[ul] \\
			\bullet \ar[u] &  & &  \\
		&  &  &  } \hookrightarrow
		\xymatrix{ & & \ast &  \\
			\bullet \ar[urr] & \bullet \ar[ur] & \bullet \ar[u] & \bullet \ar[ul] \\
			\bullet \ar[u] & \bullet \ar[u] & \bullet \ar[u] &  \\
		\bullet \ar[u] &  &  &  } \twoheadrightarrow
		\xymatrix{ & & \ast &  \\
			\bullet \ar[urr] & \bullet \ar[ur] & \bullet \ar[u] &  \\
			&& \bullet \ar[u] &  \\
		&  &  &  }
	\]
\end{exa}

Given a finite $\FF_1[ [t]]$-module $K$ with nilpotent endomorphism $T$, the dual
$D(K) = \Hom(K, \{1,\ast\})$ is naturally equipped with a nilpotent endomorphism obtained by
precomposing with $T$. The corresponding rooted tree $\Gamma_{D(K)}$ is obtained from
$\Gamma_{K}$ by removing $\ast$, reversing the orientation of all edges, and adding $\ast$
as a root. This description implies that $D(K)$ has the same branch lengths and is hence
isomorphic to $K$. We deduce that $\Hall(\FF_1[ [t]])$ is commutative.

\begin{prop}\label{prop:hallf1} Let $\lambda$ be a partition of length $s$. Then, in $\Hall(\FF_1[ [t]])$, we have
	\begin{equation}\label{eq:formula}
		u_{(1^{\lambda_1})}u_{(1^{\lambda_2})} \cdots u_{(1^{\lambda_s})}= \sum_{\mu} a_{\lambda \mu} u_{\mu}
	\end{equation}
	where $a_{\lambda \mu}$ denotes the number of $\NN$-by-$\NN$ matrices with entries in
	$\{0,1\}$ with column sums $\lambda$ and row sums $\mu$.
\end{prop}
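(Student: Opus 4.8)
The plan is to reduce the product to a flag count and then set up an explicit bijection between such flags and $\{0,1\}$-matrices. First I would invoke the multifold version of the Hall product (the generalization of Theorem \ref{theo:hall1} promised in Section \ref{section:groupoids}): fixing a module $M$ of type $\mu$, the coefficient $a_{\lambda\mu}$ of $u_\mu$ on the left-hand side of \eqref{eq:formula} is the number of flags
\[
M = M_0 \supseteq M_1 \supseteq \cdots \supseteq M_s = 0
\]
of submodules such that the successive quotient $M_{i-1}/M_i$ has type $(1^{\lambda_i})$. The key observation is that $(1^{\lambda_i})$ is precisely the trivial module of dimension $\lambda_i$, i.e.\ the module on which the nilpotent endomorphism $T$ acts by zero. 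Hence ``$M_{i-1}/M_i$ has type $(1^{\lambda_i})$'' is equivalent to the two conditions $T(M_{i-1}) \subseteq M_i$ and $|M_{i-1}\setminus M_i| = \lambda_i$.

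Next I would translate a flag into a labelling of the rooted forest $\Gamma_M$ associated to $M$. Recall that the non-basepoint elements of $M$ are the vertices of $\Gamma_M$, that $T$ sends each vertex to its parent (toward the root $\ast$), and that the branches of $\Gamma_M$ have lengths $\mu_1 \geq \mu_2 \geq \cdots$. To a flag I associate the ``removal time'' function $\phi\colon M\setminus\{\ast\}\to\{1,\dots,s\}$ defined by $\phi(x)=i$ when $x\in M_{i-1}\setminus M_i$. Unwinding the two conditions above shows that flags correspond bijectively to functions $\phi$ satisfying (a) $\phi(Tx)>\phi(x)$ whenever $Tx\neq\ast$, and (b) $|\phi^{-1}(i)|=\lambda_i$ for all $i$; here $M_i=\{\ast\}\cup\{x:\phi(x)>i\}$ recovers the flag, and $T$-invariance of the $M_i$ together with the prescribed quotient dimensions is exactly (a) plus (b).

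Finally I would read off the matrix. Condition (a) says $\phi$ is strictly increasing along each branch as one moves toward the root, so on the $j$-th branch the datum of $\phi$ is equivalent to a subset $S_j\subseteq\{1,\dots,s\}$ of size $\mu_j$ (the set of values taken, the strictly increasing order being forced). Encoding these subsets as the rows of a matrix $A$ with $A_{ji}=1$ iff $i\in S_j$ produces a $\{0,1\}$-matrix whose $j$-th row sum is $|S_j|=\mu_j$ and whose $i$-th column sum is $|\{j:i\in S_j\}|=|\phi^{-1}(i)|$, which by (b) equals $\lambda_i$. This assignment is a bijection between flags and $\NN$-by-$\NN$ matrices with entries in $\{0,1\}$, row sums $\mu$ and column sums $\lambda$, giving the claimed formula for $a_{\lambda\mu}$. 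The main thing to get right --- and the only real obstacle --- is the bookkeeping: fixing the sub/quotient convention of the Hall product, checking that trivial successive quotients correspond exactly to the $T$-compatibility $T(M_{i-1})\subseteq M_i$, and confirming that monotonicity forces $\phi$ on a branch to be determined by an unordered subset, so that distinct branches (even of equal length) give genuinely distinct, independently chosen rows and nothing is over- or under-counted.
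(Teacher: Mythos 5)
Your proof is correct and follows essentially the same route as the paper: both arguments identify $a_{\lambda\mu}$ with a count of flags via the multifold Hall product and then biject flags with $\{0,1\}$-matrices by recording, for each branch of the rooted forest of $K$, the steps at which its elements are removed (the paper builds the matrix column-by-column from the tips removed at each stage; your ``removal time'' function $\phi$ builds it row-by-row, which is just the transposed bookkeeping). Your version is slightly more explicit about why trivial successive quotients force $T(M_{i-1})\subseteq M_i$ and why strict monotonicity along a branch reduces the data to an unordered subset, but the underlying bijection is identical.
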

\begin{proof}
	By definition, the coefficient $a_{\lambda \mu}$ is the number of flags
	\[
		K = K_0 \supset K_1 \supset \dots \supset K_{s-1} \supset K_s = \{\ast\}
	\]
	where $K$ is fixed of type $\mu$ and $K_{i-1}/K_i$ has type $(1^{\lambda_i})$. We represent
	$K$ as an oriented graph with branches labelled by $1, \dots, s$. Let $K_1
	\subset K$ be a submodule such that $K/K_1$ has type $(1^{\lambda_1})$. Then the set $K
	\setminus K_1$ consists of exactly $\lambda_1$ elements which form the tips of pairwise
	disjoint branches. We may encode this in a vector $v_1$ in $\{0,1\}^s$ where we mark those branches of
	$K$ which contain a point in $K \setminus K_1$ by $1$ and all remaining branches by $0$.
	note that the sum over all entries in $v_1$ equals $\lambda_1$. We
	repeat this construction for each $K_i \subset K_{i-1}$ and organize the resulting vectors
	$v_1, v_2, \dots, v_s$ as the columns of a matrix. By construction, this matrix has column sums
	$\lambda$ and row sums $\mu$. This construction establishes a bijection between flags of
	the above type and $\{0,1\}$-matrices with column sums $\lambda$ and row sums $\mu$.
\end{proof}

To analyze the nature of the matrix $(a_{\lambda \mu})$ indexed by the set of all partitions, we
introduce some terminology: We define two orders on the set $P_n$ of partitions of a natural number $n$:
\begin{enumerate}
	\item The {\em lexicographic order}: $\mu \le^l \lambda$ if $\mu = \lambda$ or the first
		nonzero difference $\mu_i - \lambda_i$ is negative. 
	\item The {\em dominance order}: $\mu \le^d \lambda$ if, for all $i \ge 1$, 
		\[
			\mu_1 + \dots + \mu_i \le \lambda_1 + \dots + \lambda_i.
		\]
\end{enumerate}
Observe that $\mu \le^d \lambda$ implies $\mu \le^l \lambda$. 

\begin{prop} Define $\widetilde{a}_{\lambda \mu} = a_{\lambda' \mu}$. Then, we have $\widetilde{a}_{\lambda \lambda} = 1$, and
	$\widetilde{a}_{\lambda \mu} = 0$ unless $|\lambda| = |\mu|$ and $\mu \le^d \lambda$.
\end{prop}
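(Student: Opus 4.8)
The plan is to translate the combinatorial quantity into a counting problem and then run a single majorization estimate that delivers both assertions at once. By definition $\widetilde{a}_{\lambda\mu} = a_{\lambda'\mu}$ counts $\{0,1\}$-matrices whose column sums are $\lambda'$ and whose row sums are $\mu$. Counting the total number of entries equal to $1$ in two ways gives $\sum_j \lambda'_j = \sum_i \mu_i$, i.e. $|\lambda'| = |\mu|$; since conjugation preserves size, $|\lambda'| = |\lambda|$, so the set of such matrices is empty unless $|\lambda| = |\mu|$. This disposes of the size condition immediately.

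The heart of the argument is the following estimate, which I would isolate as a lemma. Let $M$ be any $\{0,1\}$-matrix whose row sums form the partition $\mu = (\mu_1 \ge \mu_2 \ge \cdots)$, and let $c$ be its column-sum vector. For every $k \ge 1$, the sum of the $k$ largest entries of $c$ is bounded by $\sum_i \min(\mu_i, k)$: restricting attention to any $k$ columns, row $i$ can contribute at most $\min(\mu_i, k)$ ones (at most $\mu_i$ across the whole row, and at most $k$ among $k$ columns). Recognizing $\sum_i \min(\mu_i, k) = \sum_{j \le k} \mu'_j$, this says exactly that the sorted column-sum vector is dominated by $\mu'$, so $c \le^d \mu'$. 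Applying this with $c = \lambda'$ yields $\lambda' \le^d \mu'$, and since conjugation reverses the dominance order on partitions of a fixed size, this is equivalent to $\mu \le^d \lambda$. Hence whenever $\mu \not\le^d \lambda$ the matrix set is empty and $\widetilde{a}_{\lambda\mu} = 0$.

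For the diagonal value I would argue existence and uniqueness separately. Existence: the Young-diagram matrix $M$ defined by $M_{ij} = 1 \iff j \le \lambda_i$ has row $i$ summing to $\lambda_i$ and column $j$ summing to $\#\{i : \lambda_i \ge j\} = \lambda'_j$, so it realizes the margins $(\lambda, \lambda')$ and $\widetilde{a}_{\lambda\lambda} \ge 1$. Uniqueness: when $\mu = \lambda$ the prescribed column sums are $c = \lambda'$, so the dominance inequality above becomes an equality for every $k$. Equality of the two sums forces equality in each summand $\min(\lambda_i, k)$, for all $i$ and all $k$, meaning row $i$ must place exactly $\min(\lambda_i,k)$ ones among its first $k$ columns; letting $k$ increase then pins the support of row $i$ to be precisely columns $1, \dots, \lambda_i$. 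Thus $M$ is the only matrix with these margins and $\widetilde{a}_{\lambda\lambda} = 1$.

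I expect the main obstacle to be the uniqueness bootstrap: passing from ``equality of the two sums for each $k$'' to ``every entry is determined'' requires both the observation that equality in a sum of termwise-bounded quantities forces termwise equality, and a careful combination of these per-$k$ equalities across increasing $k$ to reconstruct each row's support. I would also record as a small lemma the fact that conjugation reverses dominance on partitions of a fixed size, and I would note that the estimate of the second paragraph is exactly one direction of the Gale--Ryser theorem, so the whole argument is self-contained rather than invoking that result as a black box.
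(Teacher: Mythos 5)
Your proof is correct, but it takes a genuinely different route from the paper's. The paper argues by an iterative swap: it defines a \emph{gap} in a column (a $0$ lying above a $1$), observes that a gap-free matrix with column sums $\lambda'$ must be the Young-diagram matrix of $\lambda$ (giving both $\widetilde{a}_{\lambda\lambda}=1$ and the diagonal identification), and then repeatedly swaps a gap's $0$ with the lowest $1$ in its column, checking that each swap preserves column sums and moves the row-sum vector upward in the dominance order (suitably extended to non-partition sequences); iterating until no gaps remain produces a chain $\mu \le^d \alpha \le^d \cdots \le^d \lambda$. You instead prove the dominance inequality in one shot via the Gale--Ryser necessity estimate: any $k$ columns receive at most $\min(\mu_i,k)$ ones from row $i$, hence $\lambda' \le^d \mu'$, and conjugation reverses dominance; the diagonal case then falls out of the equality analysis of that same inequality, which pins every row's support to an initial segment. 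Your argument buys a cleaner and fully self-contained inequality (no need to extend the dominance order to arbitrary sequences, and no termination argument for the swap process, which the paper leaves implicit), at the cost of two auxiliary facts you correctly flag as lemmas: $\sum_i \min(\mu_i,k)=\sum_{j\le k}\mu'_j$ and the order-reversal of conjugation. The paper's swap argument is more in the bijective spirit of the surrounding section and makes the extremal role of the Young-diagram matrix vivid, but both proofs are complete and establish exactly the claimed statement.
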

\begin{proof} First assume that $(A_{ij})$ is a $\{0,1\}$-matrix with column sums $\lambda'$ and
	row sums $\mu$ so that $(A_{ij})$ has no {\em gaps}. Here a gap is a $0$ entry in a column
	which is followed below by an entry $1$ in the same column. The condition that
	$(A_{ij})$ has no gaps means that the $1$-entries of the matrix constitute a Young diagram
	of the partition $\mu$ whose transpose is $\lambda'$. This implies $\lambda = \mu$. Vice
	versa, it is easy to see that this ``Young'' matrix is the unique $\{0,1\}$-matrix with
	column sums $\lambda'$ and row sums $\lambda$. Therefore, we obtain $\widetilde{a}_{\lambda\lambda} = 1$.
	Now suppose that $(A_{ij})$ is a $\{0,1\}$-matrix with column sums $\lambda'$ and row sums
	$\mu$ which has a gap. Pick a column with a gap and swap the $0$ forming the gap with the
	lowest $1$ in the same column, thus obtaining a new matrix $(\widetilde{A}_{ij})$. The
	column sums of the new matrix have not changed. The sequence of row sums $\alpha =
	(\alpha_1,\alpha_2,\dots)$ has changed, in particular, it may not form a partition. But,
	enlarging the definition of the dominance order from partitions to arbitrary sequences with
	entries in $\NN$, it is immediate to verify $\alpha \ge^d \mu$. We obtain a modified matrix
	$(\widetilde{A}_{ij})$ with less gaps, column sums $\lambda'$, and row sums given by $\alpha
	\ge^d \mu$. 
	If the matrix $(\widetilde{A}_{ij})$ has no gaps, then the above argument shows
	that $\alpha = \lambda$. Otherwise we iterate, producing a totally ordered chain
	of sequences in $\NN$
	\[
		\mu \le^d \alpha \le^d \cdots \le^d \lambda
	\]
	showing that $\mu \le^d \lambda$.
\end{proof}

The proposition implies that the matrix $(\widetilde{a}_{\lambda \mu})$ is upper unitriangular with
respect to the lexicographic order so that it is invertible over $\ZZ$. We deduce the invertibility
of the matrix $(a_{\lambda \mu})$ and conclude:

\begin{cor} The set $\{u_{(1^r)} |\; r > 0\}$ is algebraically independent and generates
	$\Hall(\FF_1[ [t]])$ as a $\ZZ$-algebra. In other words, 
	\[
		\Hall(\FF_1[ [t]]) = \ZZ[ u_{(1)}, u_{(1^2)}, \dots]
	\]
	is a polynomial ring in countably many variables.
\end{cor}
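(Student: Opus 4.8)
The plan is to leverage the grading of $\Hall(\FF_1[ [t]])$ by partition size together with the invertibility of the transition matrix just established. First I would note that, since the total length of an $\FF_1[ [t]]$-module is additive in short exact sequences, the product $u_\nu \cdot u_\lambda$ is supported on partitions $\mu$ with $|\mu| = |\nu| + |\lambda|$; hence $\Hall(\FF_1[ [t]]) = \bigoplus_{n \ge 0} H_n$ is a graded ring, where $H_n$ is free abelian on the basis $\{u_\mu : \mu \in P_n\}$.

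Next, using the commutativity established above, I would record that a monomial in the generators $u_{(1^r)}$ is determined by the multiset of exponents that occur, i.e.\ by a partition. Writing $m_\lambda := u_{(1^{\lambda_1})} u_{(1^{\lambda_2})} \cdots u_{(1^{\lambda_s})}$ for $\lambda = (\lambda_1, \ldots, \lambda_s)$, this monomial lies in $H_{|\lambda|}$, and Proposition \ref{prop:hallf1} expresses it as $m_\lambda = \sum_{\mu \in P_{|\lambda|}} a_{\lambda\mu} u_\mu$. Thus, for each fixed $n$, the family $\{m_\lambda : \lambda \in P_n\}$ is obtained from the basis $\{u_\mu : \mu \in P_n\}$ of $H_n$ by the integer matrix $(a_{\lambda\mu})_{\lambda,\mu \in P_n}$, whose invertibility over $\ZZ$ we have just deduced. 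Therefore $\{m_\lambda : \lambda \in P_n\}$ is again a $\ZZ$-basis of $H_n$, for every $n$.

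Both claims then drop out. For generation, each $u_\mu$ is a $\ZZ$-combination of the $m_\lambda$ and so lies in the subring generated by $\{u_{(1^r)} : r > 0\}$; as the $u_\mu$ span $\Hall(\FF_1[ [t]])$, that subring is the whole algebra. For algebraic independence, a nonzero polynomial relation among the $u_{(1^r)}$ expands into a nonzero $\ZZ$-linear relation among the monomials $m_\lambda$; grouping by degree and using that monomials of distinct size lie in distinct summands $H_n$, this would yield a nontrivial linear dependence among $\{m_\lambda : \lambda \in P_n\}$ for some $n$, contradicting that they form a basis of $H_n$. Hence $\Hall(\FF_1[ [t]]) = \ZZ[u_{(1)}, u_{(1^2)}, \ldots]$ as claimed.

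The argument is essentially the standard ``an invertible graded change-of-basis matrix exhibits a polynomial ring'' principle, so I do not expect a serious obstacle. The two points that actually require the preceding results --- and where care is needed --- are the reduction from the computed matrix $(\widetilde a_{\lambda\mu})$ to $(a_{\lambda\mu})$ via the transpose involution $\lambda \mapsto \lambda'$ on $P_n$, and the clean indexing of monomials by partitions, which hinges on commutativity.
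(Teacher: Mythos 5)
Your proof is correct and takes essentially the same route as the paper: the paper deduces the corollary directly from the invertibility over $\ZZ$ of the matrix $(a_{\lambda\mu})$ (obtained from the unitriangularity of $(\widetilde a_{\lambda\mu})$ via the transpose involution), and you have simply made explicit the grading, the role of commutativity in indexing monomials by partitions, and the standard change-of-basis conclusion that the paper leaves implicit.
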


\subsubsection{Relation to symmetric functions}

Let $\ZZ[x_1,\dots,x_n]$ denote the ring of polynomials with integer coefficients. The symmetric
group $S_n$ acts by permuting the variables and the polynomials which are invariant under this action
are called {\em symmetric polynomials}. They form a ring which we denote by
\[
	\Lambda_n = \ZZ[x_1,\dots,x_n]^{S_n}.
\]
The ring $\Lambda_n$ is graded by total degree so that we have
\[
	\Lambda_n^k = \bigoplus_{k \ge 0} \Lambda_n^k.
\]
Given a tupel $\alpha = (\alpha_1,\alpha_2,\dots,\alpha_n) \in \NN^n$, we obtain a monomial
\[
	x^\alpha = x_1^{\alpha_1} x_2^{\alpha_2} \cdots x_n^{\alpha_n}.
\]
For a partition $\lambda$ of length $l(\lambda) \le n$, we define
\[
	m_\lambda(x_1,\dots,x_n) = \sum_\alpha x^\alpha
\]
where the sum ranges over all distinct permutations $\alpha$ of $(\lambda_1, \lambda_2, \dots, \lambda_n)$.
For example, we have
	\begin{align*}
		m_{(1,1,\dots,1)}(x_1,\dots,x_n) & = x_1 x_2 \cdots x_n\\
		m_{(1,1,0,\dots,0)}(x_1,\dots,x_n) & = \sum_{i < j} x_i x_j\\
		m_{(k,0,0,\dots,0)}(x_1,\dots,x_n) & = x_1^k + x_2^k + \dots + x_n^k.
	\end{align*}
It is immediate that the collection of polynomials $\{m_\lambda(x_1,\dots,x_n) | l(\lambda) \le n
\}$ forms a $\ZZ$-basis of $\Lambda_n$. In particular, the set $\{m_\lambda(x_1,\dots,x_n) |
l(\lambda) \le n,\; |\lambda| = k\}$ forms a $\ZZ$-basis of $\Lambda_n^k$. 

Many statements and formulas involving symmetric polynomials hold independently of the number of
variables $n$. This can be naturally incorporated by introducing {\em symmetric functions} which formalize the
notion of symmetric polynomials in countably many variables. Fix $k \ge 0$ and consider the projective 
system of abelian groups
\[
	\dots \lra \Lambda_{n+1}^k \overset{\rho_{n+1}}{\lra} \Lambda_n^k \overset{\rho_{n}}{\lra} \Lambda_{n-1}^k \lra \dots
\]
where the map $\rho_{n+1}: \Lambda_{n+1}^k \lra \Lambda_n^k$ is obtained by sending $x_{n+1}$ to $0$. We denote
the inverse limit of the projective system by
\[
	\Lambda^k = \underset{\longleftarrow}{\lim}\; \Lambda_{\bullet}^k.
\]

\begin{exa}\label{ex:monomials} Let $\lambda$ be a partition and let $n > l(\lambda)$. Then we have 
	\[
		m_\lambda(x_1,\dots,x_{n-1},0) = 
		m_\lambda(x_1,\dots,x_{n-1}). 
	\]
	Therefore, the sequence $\{ m_\lambda(x_1,\dots,x_n) | n > l(\lambda)\}$ defines an element
	of $\Lambda^k$ which we denote by $m_\lambda$. We call the symmetric functions
	$\{m_\lambda\}$
	the {\em monomial symmetric functions}.
\end{exa}

We finally define 
\[
	\Lambda = \bigoplus_{k \ge 0} \Lambda^k
\]
called the {\em ring of symmetric functions}. There is an apparent bilinear multiplication map $\Lambda^k
\times \Lambda^{k'} \lra \Lambda^{k+k'}$ which makes $\Lambda$ a graded ring. The set $\{ m_\lambda\}$ where $\lambda$ ranges over all partitions
forms a basis of the ring $\Lambda$ of symmetric functions.

We introduce another family of symmetric functions. For $r>0$, let 
\[
	e_r = m_{(1^r)} = \sum_{i_1 < i_2 < \dots < i_r} x_{i_1} x_{i_2} \cdots x_{i_r}
\]
and for a partition $\lambda$, we let
\[
	e_\lambda  = e_{\lambda_1}e_{\lambda_2} \cdots e_{\lambda_s}
\]
where $s = l(\lambda)$. The symmetric functions $\{e_{\lambda}\}$ are called {\em elementary
symmetric functions}. 

\begin{prop}\label{prop:key} 
	Let $\lambda$ be a partition. We have
	\[
			e_{\lambda} = \sum_{\mu} a_{\lambda \mu} m_{\mu}
	\]
	where, as above, $a_{\lambda \mu}$ denotes the number of $\{0,1\}$-matrices
	with column sums $\lambda$ and row sums $\mu$.
\end{prop}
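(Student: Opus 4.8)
The plan is to prove the identity by expanding the product $e_\lambda = e_{\lambda_1} e_{\lambda_2} \cdots e_{\lambda_s}$ directly and reading off the coefficient of each monomial in combinatorial terms. First I would use the definition $e_r = \sum_{i_1 < \dots < i_r} x_{i_1} \cdots x_{i_r}$ to write
\[
	e_\lambda = \prod_{j=1}^s \Bigl( \sum_{S_j} \prod_{i \in S_j} x_i \Bigr),
\]
where, for each $j$, the subset $S_j$ ranges over all $\lambda_j$-element subsets of the index set $\{1,2,\dots\}$. Expanding the product, each term is specified by a choice of one subset $S_j$ for every $j$, and it contributes the monomial $x^\alpha$ whose exponent $\alpha_k$ is the number of indices $j$ with $k \in S_j$.

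The key step is to encode a choice $(S_1,\dots,S_s)$ as a $\{0,1\}$-matrix $A = (A_{kj})$ with rows indexed by $k \in \{1,2,\dots\}$ and columns indexed by $j \in \{1,\dots,s\}$, where $A_{kj} = 1$ exactly when $k \in S_j$. The constraint $|S_j| = \lambda_j$ says precisely that the $j$-th column sum equals $\lambda_j$, so these matrices are exactly the $\{0,1\}$-matrices with column sums $\lambda$; moreover the $k$-th row sum equals the exponent $\alpha_k$. Consequently the coefficient of $x^\alpha$ in $e_\lambda$ is the number of $\{0,1\}$-matrices with column sums $\lambda$ and row sums $\alpha$, which is the same bookkeeping that produced the structure constants $a_{\lambda\mu}$ in Proposition \ref{prop:hallf1}.

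It remains to collect monomials into the $m_\mu$. Each exponent vector $\alpha$ (a sequence in $\NN$ with finite support) sorts to a unique partition $\mu$, and permuting the rows of a matrix by the corresponding permutation gives a bijection between $\{0,1\}$-matrices with column sums $\lambda$ and row sums $\alpha$ and those with column sums $\lambda$ and row sums any rearrangement of $\alpha$. Hence the coefficient of $x^\alpha$ depends only on $\mu$ and equals $a_{\lambda\mu}$. Grouping the distinct permutations $\alpha$ of each $\mu$ reconstitutes the monomial symmetric function, giving
\[
	e_\lambda = \sum_\mu a_{\lambda\mu} \Bigl( \sum_{\alpha} x^\alpha \Bigr) = \sum_\mu a_{\lambda\mu}\, m_\mu,
\]
where for each $\mu$ the inner sum ranges over the distinct permutations $\alpha$ of $(\mu_1,\mu_2,\dots)$, as in Example \ref{ex:monomials}.

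The argument is essentially a dictionary between the terms of the expanded product and the matrices of Proposition \ref{prop:hallf1}, so there is no genuine obstacle; the only point requiring care is to run it with countably many variables. Concretely I would work in $\Lambda_n$ for $n$ large enough that $l(\mu) \le n$ for every $\mu$ occurring, and then pass to the inverse limit, so that the matrices indeed have the $\NN$-indexed rows of Proposition \ref{prop:hallf1}, of which only finitely many are nonzero.
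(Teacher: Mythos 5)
Your proposal is correct and follows essentially the same route as the paper's proof: expanding the product $e_{\lambda_1}\cdots e_{\lambda_s}$ and matching each term with a $\{0,1\}$-matrix whose columns record the chosen subsets, so that column sums give $\lambda$ and row sums give the exponent vector. The extra care you take in collecting non-sorted exponent vectors into $m_\mu$ and in passing from $\Lambda_n$ to the inverse limit is a welcome tightening of details the paper leaves implicit, but it is the same argument.
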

\begin{proof} Let $x^\mu$, where $\mu$ is a partition, be a monomial which appears in the
	product expansion of
	\[
		e_{\lambda} = e_{\lambda_1}e_{\lambda_2} \cdots e_{\lambda_s}.
	\]
	This means that $x^\mu$ must be of the form
	\begin{equation}\label{eq:mu}
			x^\mu = y_1 y_2 \dots y_s
	\end{equation}
	where $y_j$ is a monomial term of $e_{\lambda_j}$. We write each monomial $y_j$ as
	\[
		y_j = \prod_i x_i^{A_{i j}}
	\]
	with $A_{ij} \in \NN$. We then observe that the condition that $y_j$ be a
	monomial term of $e_{\lambda_j}$ simply translates into the condition that the $j$th column of
	the matrix $(A_{ij})$ has entries in $\{0,1\}$ and satisfies $\sum_i A_{ij} = \lambda_j$.
	Similarly, the condition that equation \eqref{eq:mu} holds translates into the condition
	that, for every $i$, the $i$th row of the matrix $(A_{ij})$ sums to $\mu_i$. This implies
	that the number of terms $x^\mu$ (and hence, since the result is symmetric,
	the number of terms $m_\mu$) in $e_{\lambda}$ is given by $a_{\lambda \mu}$.
\end{proof}

We obtain immediate corollaries:

\begin{cor}[\cite{szczesny}]\label{cor:hallf1} There is a $\ZZ$-algebra isomorphism
	\[
		\varphi: \Hall(\FF_1[ [t]]) \overset{\cong}{\lra} \Lambda
	\]
	determined by $\varphi(u_{(1^r)}) = e_r$. We further have $\varphi(u_{\lambda}) =
	m_{\lambda}$ for an arbitrary partition $\lambda$.
\end{cor}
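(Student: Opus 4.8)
The plan is to define $\varphi$ on generators and then pin down its values on the whole basis $\{u_\mu\}$ by comparing the two triangular expansion formulas already in hand. Since the preceding corollary identifies $\Hall(\FF_1[ [t]]) = \ZZ[u_{(1)}, u_{(1^2)}, \dots]$ as a polynomial ring on the generators $u_{(1^r)}$, the universal property of polynomial rings guarantees that the assignment $u_{(1^r)} \mapsto e_r$ extends uniquely to a $\ZZ$-algebra homomorphism $\varphi \colon \Hall(\FF_1[ [t]]) \to \Lambda$. No relations need to be verified at this stage; the only input is the algebraic independence of the $u_{(1^r)}$.

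Next I would feed the multiplication rule of Proposition \ref{prop:hallf1} through $\varphi$. For a partition $\lambda$ of length $s$, applying the ring map $\varphi$ to \eqref{eq:formula} gives
\[
  e_{\lambda_1} e_{\lambda_2} \cdots e_{\lambda_s} = \varphi(u_{(1^{\lambda_1})}) \cdots \varphi(u_{(1^{\lambda_s})}) = \sum_\mu a_{\lambda\mu}\, \varphi(u_\mu).
\]
On the other hand, Proposition \ref{prop:key} expands the left-hand side as $e_\lambda = \sum_\mu a_{\lambda\mu}\, m_\mu$ with \emph{exactly the same} coefficients $a_{\lambda\mu}$. Subtracting, I obtain, for every partition $\lambda$,
\[
  \sum_\mu a_{\lambda\mu}\,\bigl(\varphi(u_\mu) - m_\mu\bigr) = 0.
\]

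The crux is then to invert this linear system. Since a $\{0,1\}$-matrix with column sums $\lambda$ and row sums $\mu$ exists only when $|\lambda| = |\mu|$, the matrix $(a_{\lambda\mu})$ is block diagonal, with one block for each degree $n$ indexed by the partitions of $n$. Within each degree the matrix $(\widetilde{a}_{\lambda\mu}) = (a_{\lambda'\mu})$ was shown above to be upper unitriangular for the lexicographic order, hence of determinant $\pm 1$; as transposition $\lambda \mapsto \lambda'$ merely permutes rows, $(a_{\lambda\mu})$ is itself invertible over $\ZZ$ in each degree. Applying the inverse to the vector $(\varphi(u_\mu) - m_\mu)_\mu$ forces $\varphi(u_\mu) = m_\mu$ for every partition $\mu$. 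This is the step I expect to demand the most care, since it is where the combinatorics of the two previous propositions is genuinely used, and one must check the degree-by-degree block structure to legitimately invert.

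Finally, knowing $\varphi(u_\mu) = m_\mu$ for all $\mu$ closes the argument: $\varphi$ carries the $\ZZ$-basis $\{u_\mu\}$ of $\Hall(\FF_1[ [t]])$ bijectively onto the $\ZZ$-basis $\{m_\mu\}$ of $\Lambda$, so it is an isomorphism of abelian groups, and being a ring homomorphism it is an isomorphism of $\ZZ$-algebras. The stated normalization $\varphi(u_{(1^r)}) = e_r$ is recovered as the special case $\mu = (1^r)$, using $e_r = m_{(1^r)}$.
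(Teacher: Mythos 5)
Your argument is correct and is precisely the derivation the paper intends when it calls this an ``immediate corollary'': define $\varphi$ on the free generators $u_{(1^r)}$ using the preceding polynomial-ring corollary, then match the identical expansion coefficients $a_{\lambda\mu}$ from Proposition \ref{prop:hallf1} and Proposition \ref{prop:key} and invert the (degree-by-degree, unitriangular up to the permutation $\lambda\mapsto\lambda'$) matrix $(a_{\lambda\mu})$ over $\ZZ$ to conclude $\varphi(u_\mu)=m_\mu$. Nothing is missing; the care you flag about the block structure and $\ZZ$-invertibility is exactly the right point to check, and it goes through.
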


\begin{cor} The set $\{e_{\lambda}\}$ forms a $\ZZ$-basis of $\Lambda$. In particular, the set
	$\{e_1, e_2, \dots\}$ is algebraically independent and generates $\Lambda$ as a ring so that 
	\[
		\Lambda = \ZZ[e_1, e_2, \dots]
	\]
	is a polynomial ring in countably many variables.
\end{cor}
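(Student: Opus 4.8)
The plan is to read off both assertions directly from Proposition~\ref{prop:key} together with the invertibility of the matrix $(a_{\lambda\mu})$, which has already been established. Recall that $\{m_\mu\}$ is known to be a $\ZZ$-basis of $\Lambda$, and that Proposition~\ref{prop:key} expresses the transition to the elementary symmetric functions as $e_\lambda = \sum_\mu a_{\lambda\mu} m_\mu$. Since $a_{\lambda\mu} = 0$ unless $|\lambda| = |\mu|$, this transition respects the grading, so it suffices to work degree by degree: in each degree $k$ the partitions of $k$ form a finite index set and $(a_{\lambda\mu})_{|\lambda|=|\mu|=k}$ is a finite square matrix.

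First I would record that this finite matrix is invertible over $\ZZ$. This was argued above by passing to $\widetilde{a}_{\lambda\mu} = a_{\lambda'\mu}$, which is upper unitriangular for the lexicographic order; the only point to note is that $(\widetilde{a}_{\lambda\mu})$ differs from $(a_{\lambda\mu})$ only by the row permutation $\lambda \mapsto \lambda'$, and since conjugation is an involution on the partitions of $k$ this is a genuine permutation, so invertibility over $\ZZ$ transfers from one matrix to the other. A $\ZZ$-linear change of basis by a matrix invertible over $\ZZ$ carries a basis to a basis, so $\{e_\lambda : |\lambda| = k\}$ is a $\ZZ$-basis of $\Lambda^k$; summing over $k$ shows that $\{e_\lambda\}$ is a $\ZZ$-basis of $\Lambda$.

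For the second assertion I would observe that the functions $e_\lambda$ are exactly the monomials in the generators $e_1, e_2, \dots$: by definition $e_\lambda = e_{\lambda_1} \cdots e_{\lambda_s}$, and as $\lambda$ ranges over all partitions the product $\prod_r e_r^{c_r}$, where $c_r$ is the number of parts of $\lambda$ equal to $r$, ranges over all monomials in the $e_r$. Thus the statement that $\{e_\lambda\}$ is a $\ZZ$-basis is precisely the statement that the distinct monomials in $e_1, e_2, \dots$ are $\ZZ$-linearly independent and span, i.e.\ that $e_1, e_2, \dots$ are algebraically independent and generate $\Lambda$ as a ring, so that $\Lambda = \ZZ[e_1, e_2, \dots]$. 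Alternatively one can transport this directly through the isomorphism $\varphi$ of Corollary~\ref{cor:hallf1}, since $\varphi(u_{(1^r)}) = e_r$ and $\Hall(\FF_1[ [t]])$ was shown to be a polynomial ring in the $u_{(1^r)}$.

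There is no real obstacle here: the substantive work, namely the triangularity of $(\widetilde{a}_{\lambda\mu})$, is already done upstream. The only things to handle carefully are the bookkeeping that reduces the infinite transition matrix to finite invertible blocks, and the elementary but easily overlooked identification of the $e_\lambda$ with the monomials in the $e_r$, which is exactly what upgrades ``is a basis'' to ``is a polynomial ring''.
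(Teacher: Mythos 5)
Your proposal is correct and follows the same route the paper intends: the corollary is drawn immediately from Proposition~\ref{prop:key} together with the $\ZZ$-invertibility of $(a_{\lambda\mu})$ established just before it, and your identification of the $e_\lambda$ with the monomials in the $e_r$ is exactly the step that upgrades ``basis'' to ``polynomial ring''. The only difference is that you spell out the degree-by-degree reduction and the row-permutation relating $(a_{\lambda\mu})$ to $(\widetilde{a}_{\lambda\mu})$, which the paper leaves implicit.
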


The statement of this last corollary is known as the {\em fundamental theorem on symmetric
functions}.

\subsubsection{Zelevinsky's statistic}

Let $R$ is a discrete valutation ring with residue field of cardinality $q$. Our discussion of
$\Hall(R)$ will be based on $q$-analogs of the arguments in Section \ref{subsec:hallf1} which are
obtained via a statistic introduced by Zelevinsky.

We start with some preparatory remarks. Recall that the length $l(M)$ of an
$R$-module $M$ is defined to be the length of a composition series of $M$. For example, the cyclic
module $R/\m^n$ has composition series
\[
	R/\m^n \supset \m/\m^n \supset \m^2/\m^n \supset \cdots \supset \m^{n-1}/\m^n \supset 0
\]
so that we have $l(R/\m^n) = n$. From this, we deduce that a finite module $M$ of type $\lambda$ has length
given by $l(M) = |\lambda|$. 
Further, given a finite module $M$, the sequence $(\mu_1,\mu_2,\dots)$
with
\[
	\mu_i = \dim_k (\m^{i-1}M/\m^iM)
\]
is the conjugate partition of $\lambda$. Finally, recall that the length is additive in short exact
sequences so that, for a submodule $N \subset M$, we have $l(M) = l(N) + l(M/N)$.

Let $\lambda$ be a partition of length $s$. We introduce numbers $b_{\lambda \mu}$ via the
equation
\[
u_{(1^{\lambda_1})}u_{(1^{\lambda_2})} \cdots u_{(1^{\lambda_s})} = \sum_{\mu} b_{\lambda \mu}
u_{\mu}.
\]
We will interpret the numbers $b_{\lambda \mu}$ as $q$-analogs of the numbers
$a_{\lambda \mu}$ from Proposition \ref{prop:hallf1} by introducing a statistic on the set
$\A_{\lambda \mu}$ of $\{0,1\}$-matrices with column sums $\lambda$ and row sums $\mu$. To this end,
we have to introduce some terminology.
For compositions $\alpha$, $\beta$, we define an array $A$ of shape $\alpha$ and weight $\beta$ to be a
labelling of the squares of the diagram of $\alpha$ such that the number $i$ appears $\beta_i$
times. We express an array as a function
\[
	A: \alpha \lra \NN^+
\]
where we consider (the diagram of) $\alpha$ as a subset of $\NN^+ \times \NN^+$. We will extend $A$ to all of
$\NN^+ \times \NN^+$ by letting elements in the complement of $\alpha$ have value $\infty$. For $x =
(i,j) \in \NN^+ \times \NN^+$, we denote $x^{\to} = (i,j+1)$. We call an array row-ordered (resp.
row-strict) if, for every $x \in \alpha$, $A(x^{\to}) \ge A(x)$ (resp. $A(x^{\to}) > A(x)$). Analogously, we define
column-ordered and column-strict arrays. 

\begin{rem}
	Given a row-strict array of shape $\mu$ and weight $\lambda$, we introduce a
	$\{0,1\}$-matrix which has entry $1$ at the positions $\{(i,A(x))\}$ where $x = (i,j)$
	runs over all $x \in \alpha$, and entry $0$ elsewhere. It is immediate that this construction provides
	a bijection between row-strict arrays of shape $\mu$ and weight $\lambda$ and the set
	$\A_{\lambda \mu}$ of $\{0,1\}$-matrices with column sums $\lambda$ and row sums $\mu$. 
\end{rem}

We define a lexicographic
order on $\NN^+ \times \NN^+$ via
\[
	(i,j) < (i',j') \quad \Longleftrightarrow \quad \text{either $j < j'$, or $j = j'$ and $i >
	i'$}.
\]
For a row-strict array $A$ of shape $\alpha$, we define
\[
	d(A) = |\{(x,y) \in \alpha \times \alpha | \; \text{$y < x$ and $A(x) < A(y) < A(x^{\to})$}
\} |.
\]
By the above remark, we obtain a statistic
\[
	d: \A_{\lambda \mu} \lra \NN, A \mapsto d(A)
\]
on the set of $\{0,1\}$-matrices with column sums $\lambda$ and row sums $\mu$ 
which we call {\em Zelevinsky's statistic}. It provides the following statistical 
interpretation of the coefficients $b_{\lambda \mu}$:

\begin{theo}[Zelevinsky] \label{theo:stat} We have
	\[
		b_{\lambda \mu} = \sum_A q^{d(A)}
	\]
	where $A$ ranges over all row-strict arrays of shape $\mu$ and weight $\lambda$.
\end{theo}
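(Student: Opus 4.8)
The plan is to compute $b_{\lambda\mu}$ directly as a count of flags and to organize that count according to a combinatorial invariant which turns out to be precisely a row-strict array. By the definition of the Hall product, $b_{\lambda\mu}$ is the number of flags
\[
M = M_0 \supset M_1 \supset \dots \supset M_s = 0
\]
in a fixed module $M$ of type $\mu$ for which each subquotient $M_{i-1}/M_i$ is isomorphic to $(R/\m)^{\lambda_i}$. Since such a subquotient is killed by $\m$ exactly when $\m M_{i-1} \subseteq M_i$, choosing $M_i$ inside $M_{i-1}$ amounts to choosing a subspace $W_i = M_i/\m M_{i-1}$ of codimension $\lambda_i$ over the residue field $k = R/\m$ inside $M_{i-1}/\m M_{i-1}$. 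The goal is then to distribute the set of all such flags over the set $\A_{\lambda\mu}$ of row-strict arrays (equivalently, by the bijection of the preceding remark, over $\{0,1\}$-matrices with column sums $\lambda$ and row sums $\mu$) and to show that the fiber over a given array $A$ has exactly $q^{d(A)}$ elements.

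First I would fix the $\m$-adic filtration of $M$ together with a basis adapted to a decomposition $M \cong \bigoplus_j R/\m^{\mu_j}$, so that the basis vectors are indexed by the boxes of the diagram of $\mu$, the $j$-th summand providing the $j$-th row. To a flag I would attach its array of removal steps: reducing each subspace $W_i \subseteq M_{i-1}/\m M_{i-1}$ to echelon form relative to the induced basis, the pivot positions record which layers are removed at step $i$, and assembling these over $i = 1, \dots, s$ produces a $\{0,1\}$-matrix with column sums $\lambda$ and row sums $\mu$, that is, a row-strict array $A$ of shape $\mu$ and weight $\lambda$. This is meant to be the exact $q$-analog of the argument proving Proposition \ref{prop:hallf1}: over $\FF_1$ there is no room for non-pivot entries and each array supports a single flag, whereas over $R$ the non-pivot entries of the echelon forms are free parameters ranging over $k$, so the fiber over $A$ should be an affine space over $k$ whose dimension equals the total number of free entries.

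Two things then remain. The routine point is well-definedness: one must check that the isomorphism types of all the $M_i$, and hence the admissibility of the successive choices, depend only on the pivot data $A$ and not on the free parameters, these being unipotent modifications that do not alter iso-type. The hard part will be the exponent count, namely that the total number of free entries equals Zelevinsky's statistic $d(A)$. At a single step the number of admissible subspaces $W_i$ is a Gaussian binomial coefficient, which by the lattice-path interpretation of the Gaussian binomial established above equals $\sum_K q^{a(K)}$; thus each step contributes an area, and the claim is that these areas sum to $|\{(x,y) : y < x,\ A(x) < A(y) < A(x^\to)\}|$. The genuine obstacle is bookkeeping this sum correctly: as $i$ increases the ambient spaces $M_{i-1}/\m M_{i-1}$ shrink and the available pivots truncate, so a free entry created at step $i$ survives only for certain configurations of the later pivots. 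I expect the lexicographic order with its reversal in the row index, together with the cutoff condition $A(y) < A(x^\to)$, to encode exactly which off-diagonal positions remain free, thereby matching the linear-algebra degrees of freedom to the pairs of boxes counted by $d(A)$. An alternative would be an induction on $s$, peeling off the factor $u_{(1^{\lambda_s})}$ and matching a recursion for $\sum_A q^{d(A)}$ obtained by inserting a new maximal entry into the arrays; but the direct fibration argument keeps the combinatorial meaning of $d$ transparent, and it is the matching of free entries to box-pairs that I anticipate being the crux either way.
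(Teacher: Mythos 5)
Your overall strategy---stratify the set of flags by the pivot data of the successive subspace choices, show each stratum is an affine space over $k$, and identify the dimension with $d(A)$---is the same in spirit as the paper's proof, run in the opposite direction: you build the flag from the top through cosocles $M_{i-1}/\m M_{i-1}$, while the paper peels off the innermost term $M_{s-1}$ (which, having type $(1^{\lambda_s})$, is an $r$-dimensional subspace of the socle of $M$) and inducts on $s$. But the proposal stops exactly where the theorem begins. Everything up to the exponent count is the $q=1$ argument of Proposition \ref{prop:hallf1} plus the general fact that Schubert strata are affine; the \emph{entire} content of Zelevinsky's theorem is that the total number of free parameters over a given array $A$ equals $d(A)$, and you explicitly leave this as something you ``expect'' and ``anticipate being the crux.'' That is a genuine gap, not a routine verification: $d(A)$ is defined through the peculiar lexicographic order $(i,j)<(i',j')$ iff $j<j'$ or ($j=j'$ and $i>i'$), and nothing in your sketch forces that particular order to appear. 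Moreover, your intermediate claim that ``at a single step the number of admissible subspaces $W_i$ is a Gaussian binomial coefficient'' is off once you fix the isomorphism type of $M_i$: the admissible $W_i$ form a \emph{union of certain Schubert cells}, not the whole Grassmannian, so the lattice-path/area statistic $a(K)$ is not the right per-step invariant.

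The paper closes this gap with two pieces of bookkeeping that your outline would also need. First, it rewrites $d(A)$ as an additive quantity over the steps of the flag: row-strict arrays of shape $\alpha$ and weight $\lambda$ biject with chains $0=\alpha^{(0)}\dashv\alpha^{(1)}\dashv\cdots\dashv\alpha^{(s)}=\alpha$ of compositions, and under this bijection $d(A)=\sum_i d(\alpha^{(i)},\alpha^{(i-1)})$, where $d(\alpha,\beta)$ counts pairs $(i,j)$ with $\beta_i=\alpha_i$, $\beta_j=\alpha_j-1$ and $(j,\alpha_j)<(i,\alpha_i)$. This is precisely the ``which free entries survive the later truncations'' problem you flag, solved in advance on the combinatorial side. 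Second, for the single step it computes $g^{\mu}_{\nu\,(1^r)}$ by decomposing the Grassmannian of $r$-planes in the socle of $M\cong\bigoplus_{i\in I}R/\m^{\alpha_i}$ into Schubert cells $C_J$ with $|C_J|=q^{d(J)}$, where $I$ is ordered by $j<i$ iff $(j,\alpha_j)<(i,\alpha_i)$; with \emph{this} ordering one gets both that the quotient type $M/N$ is constant on each cell and that $d(J)=d(\alpha,\beta)$. Note that the point you call routine (iso-type depends only on the pivot data) and the point you call hard (the exponent) are settled by the same choice of ordering, so they cannot be separated the way your outline suggests; and since you work with cosocles of submodules rather than the socle of $M$, you would additionally have to check that your pivot data is well defined, as $M_{i-1}/\m M_{i-1}$ carries no canonical basis induced from $M$. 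Until the identity ``number of free parameters $=d(A)$'' is actually proved, by your route or the paper's, the argument is an outline rather than a proof.
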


Before giving a proof, we analyze consequences of the theorem. We introduce the polynomials
$a_{\lambda\mu}(t) = \sum_A t^{d(A)}$ so that $b_{\lambda \mu} = a_{\lambda \mu}(q)$. We have:
\begin{enumerate}
	\item The polynomial $a_{\lambda\mu}(t)$ has nonnegative integral coefficients.
	\item $a_{\lambda\mu}(1)$ is the number of $\{0,1\}$-matrices with column sums $\lambda$ and
		row sums $\mu$.
	\item $a_{\lambda \mu}(t) = 0$ unless $\mu \le^d \lambda'$. Moreover, $a_{\lambda' \lambda}(t) = 1$.
\end{enumerate}
The statements of (1) and (2) are immediate. To see (3), note that we have $a_{\lambda \mu}(1) = 0$
unless $\mu \le^d \lambda'$ which implies the first statement.  $a_{\lambda' \lambda}(t) = 1$
follows from direct computation, using the fact that there is precisely one $\{0,1\}$-matrix with
column sums $\lambda'$ and row sums $\lambda$ and the corresponding array $A$ satisfies $d(A) = 0$.

We deduce that the matrix $(a_{\lambda\mu}(t))$ has an inverse over $\ZZ[t]$ and have the
following immediate consequences.

\begin{theo}\label{theo:hallfq} Let $R$ be a discrete valuation ring with residue field of cardinality $q$. Then the
	following hold:
	\begin{enumerate}
		\item The set $\{u_{(1^r)} |\; r > 0\}$ is algebraically independent and generates 
			$\Hall(R)$ as a $\ZZ$-algebra. In particular, there is a $\ZZ$-algebra isomorphism 
			\[
				\psi: \Hall(R) \overset{\cong}{\lra} \Lambda
			\]
			determined by $\psi(u_{(1^r)}) := m_{(1^r)}$. 
		\item The structure constants of $\Hall(R)$ are polynomial in $q$ so that 
			there exist polynomials $g_{\mu \nu}^\lambda(t)$ such that
			\[
				u_\mu u_\nu = \sum_\lambda g_{\mu \nu}^\lambda(q) u_\lambda.
			\]
			The numbers $g_{\mu \nu}^\lambda(1)$ are the structure constants of
			$\Hall(\FF_1[ [t]])$ and, hence, the structure constants of the $\ZZ$-basis
			of $\Lambda$ given by the monomial symmetric functions.
	\end{enumerate}
\end{theo}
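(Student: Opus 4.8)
The plan is to route both parts through a single device: the base-change matrix $(a_{\lambda\mu}(t))$ between the monomials $E_\lambda := u_{(1^{\lambda_1})}\cdots u_{(1^{\lambda_s})}$ and the standard basis $\{u_\mu\}$, which by the definition of $b_{\lambda\mu}$ and Theorem~\ref{theo:stat} satisfies $E_\lambda = \sum_\mu a_{\lambda\mu}(q)\, u_\mu$. For part (1) I would first record that $\Hall(R)$ is commutative: as with $\Hall(\Fone[ [t]])$, the type-preserving Matlis duality on finite-length $R$-modules gives an exact self-duality $D$ with $D(M)\cong M$, so the commutativity criterion from the remark in Section~\ref{sec:first} applies. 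Commutativity makes $\{E_\lambda\}$ literally the set of monomials in the generators $\{u_{(1^r)}\}$. Now I invoke the invertibility of $(a_{\lambda\mu}(t))$ over $\ZZ[t]$ established above, which comes from consequence (3) of Theorem~\ref{theo:stat}: the reindexed matrix $(a_{\lambda'\mu}(t))$ is unitriangular for the dominance order $\le^d$, hence invertible, and conjugation $\lambda\mapsto\lambda'$ is a bijection of partitions. Since $a_{\lambda\mu}(t)=0$ unless $|\lambda|=|\mu|$, the matrix is block-diagonal with finite blocks, and evaluation at $t=q$ preserves integral invertibility on each block. Thus $\{E_\lambda\}$ is a $\ZZ$-basis of $\Hall(R)$; a set of monomials that is simultaneously spanning and linearly independent is exactly the assertion that the $u_{(1^r)}$ are algebraically independent and generate, i.e. $\Hall(R)=\ZZ[u_{(1)},u_{(1^2)},\dots]$. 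Because the fundamental theorem on symmetric functions gives $\Lambda=\ZZ[e_1,e_2,\dots]$ with $e_r=m_{(1^r)}$, the assignment $u_{(1^r)}\mapsto m_{(1^r)}$ extends uniquely to a $\ZZ$-algebra isomorphism $\psi$.

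For part (2) I would transport the multiplication through this change of basis. Let $N(t)=(a_{\lambda\mu}(t))^{-1}$, which again has entries in $\ZZ[t]$ since the reindexed matrix is unitriangular; then $u_\mu=\sum_\lambda N_{\mu\lambda}(q) E_\lambda$. Using commutativity, $E_\lambda E_\rho = E_{\lambda\cup\rho}$, where $\lambda\cup\rho$ is the partition whose parts are the multiset union of those of $\lambda$ and $\rho$, and re-expanding via $E_{\lambda\cup\rho}=\sum_\sigma a_{(\lambda\cup\rho)\sigma}(q) u_\sigma$ yields
\[
u_\mu u_\nu = \sum_\sigma \Big(\sum_{\lambda,\rho} N_{\mu\lambda}(q)\, N_{\nu\rho}(q)\, a_{(\lambda\cup\rho)\sigma}(q)\Big) u_\sigma .
\]
Hence the structure constant is the value at $t=q$ of the polynomial $g_{\mu\nu}^\sigma(t):=\sum_{\lambda,\rho} N_{\mu\lambda}(t) N_{\nu\rho}(t)\, a_{(\lambda\cup\rho)\sigma}(t)\in\ZZ[t]$, which proves polynomiality of the structure constants of $\Hall(R)$ in $q$.

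Finally I would read off the value at $t=1$. By consequence (2) of Theorem~\ref{theo:stat}, $a_{\lambda\mu}(1)$ is exactly the number of $\{0,1\}$-matrices with column sums $\lambda$ and row sums $\mu$, i.e. the coefficient of Proposition~\ref{prop:hallf1} governing $\Hall(\Fone[ [t]])$; moreover $\Hall(\Fone[ [t]])$ is commutative with the same monomial basis $E_\lambda$ and transition matrix $(a_{\lambda\mu}(1))$, whose inverse is precisely $N(1)$ since $N(t)(a_{\lambda\mu}(t))=I$ over $\ZZ[t]$ specializes. Therefore the identical computation at $t=1$ shows that $g_{\mu\nu}^\sigma(1)$ is the structure constant of $\Hall(\Fone[ [t]])$ in the basis $\{u_\mu\}$, which under the isomorphism $\varphi$ of Corollary~\ref{cor:hallf1} is the structure constant of the monomial symmetric functions. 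The only genuinely substantive input is Theorem~\ref{theo:stat} with its triangularity corollary, both assumed; the remaining work is bookkeeping, and the points needing care are that all matrices are finite and integrally invertible on each graded piece, that $N(t)$ stays in $\ZZ[t]$ so that inversion commutes with evaluation, and that one does not overclaim $\psi(u_\mu)=m_\mu$, an equality that holds only at $q=1$.
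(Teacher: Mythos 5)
Your proposal is correct and follows exactly the route the paper intends: the theorem is presented there as an ``immediate consequence'' of Zelevinsky's statistic, via the unitriangularity and hence $\ZZ[t]$-invertibility of the transition matrix $(a_{\lambda\mu}(t))$ between the products $u_{(1^{\lambda_1})}\cdots u_{(1^{\lambda_s})}$ and the basis $\{u_\mu\}$, specialized at $t=q$ and $t=1$. You have merely written out the bookkeeping the paper leaves implicit (including the commutativity of $\Hall(R)$ and the caveat that $\psi(u_\mu)\ne m_\mu$ for general $q$), all of which is accurate.
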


From the theorem, we deduce that we have constructed a family of $\ZZ$-bases $\{\psi(u_\lambda)\}$
of $\Lambda$ which varies polynomially in $q$ and specializes for $q=1$ to the basis given by the
monomial symmetric functions. Up to some renormalization, the symmetric functions comprising this
family are known as the {\em Hall-Littlewood symmetric functions}. Remarkably, this family of bases also relates to another natural
basis of $\Lambda$: the {\em Schur functions}. This can be seen from a more refined analysis of
Zelevinsky's statistic relating it to {\em Kostka numbers} (\cite[Appendix A]{macdonald}). 
We only mention the final result of this discussion which leads to the following augmentation of Theorem \ref{theo:hallfq}:
\begin{enumerate}
	\item[(3)] The polynomial $g_{\mu \nu}^\lambda(t)$ has degree $\le n(\lambda) - n(\mu) - n(\nu)$ and
	the various coefficients of $t^{n(\lambda) - n(\mu) - n(\nu)}$ are the structure constants of
	$\Lambda$ with respect to the Schur basis $\{s_\lambda\}$.
\end{enumerate}
The results of Theorem \ref{theo:hallfq} have been known independently to Hall (1959) and Steinitz
(1901). 
We conclude with a proof of Zelevinsky's theorem: 
\begin{proof}
	We prove a slightly stronger version which will allow for an inductive argument:
	Let $\mu$, $\lambda$ be partitions and let $\alpha \sim \mu$ be a composition which is a
	permutation of $\mu$. Then we will show that
	\[
		b_{\lambda \mu} = \sum_A q^{d(A)}
	\]
	where $A$ ranges over all arrays of shape $\alpha$ and weight $\lambda$.\\

{\bf Step 1.} We reformulate the formula in terms of sequences of compositions: For $\alpha$,
$\beta$ compositions, we write $\beta \dashv \alpha$ if, for all $i$, $\alpha_i -1 \le \beta_i \le
\alpha_i$. For $\beta \dashv \alpha$, we define
\[
	d(\alpha,\beta) = | \{ (i,j) | \; \text{$\beta_i = \alpha_i$, $\beta_j = \alpha_j -1$, and
	$(j,\alpha_j) < (i,\alpha_i)$} \} |
\]
We then observe that, given $\alpha$ composition and $\lambda$ partition of length $\le s$,
we have a natural bijection between 
\[
	\left\{ \text{row-strict arrays $A$ of shape $\alpha$ and weight $\lambda$} \right\}
\]
and 
\[
	\left\{ \text{sequences $0 = \alpha^{(0)} \dashv \alpha^{(1)} \dashv \cdots \dashv
	\alpha^{(s)} = \alpha$ with $|\alpha^{(i)}| - |\alpha^{(i-1)}| = \lambda_i$} \right\}.
\]
Under this correspondence, we have
\[
	d(A) = \sum_{i \ge 0} d(\alpha^{(i)}, \alpha^{(i-1)}).
\]

{\bf Step 2.} Induction: Assume that 
\[
	u_{(1^{\lambda_1})} \cdots u_{(1^{\lambda_{s-1}})} = \sum_\nu \left(\sum_{0 = \beta^{(0)}
	\dashv \cdots \dashv \beta^{(s-1)} = \beta} \prod_{i \ge 1} q^{d(\beta^{(i)},
	\beta^{(i-1)})}\right) u_\nu
\]
where, for each $\nu$, $\beta$ is a fixed permutation of $\nu$. Then to show that  
\[
	u_{(1^{\lambda_1})} \cdots u_{(1^{\lambda_{s}})} = \sum_\mu \left(\sum_{0 = \alpha^{(0)}
	\dashv \cdots \dashv \alpha^{(s)} = \alpha} \prod_{i \ge 1} q^{d(\alpha^{(i)},
	\alpha^{(i-1)})}\right) u_\mu
\]
where, for each $\mu$, $\alpha$ is a fixed permutation of $\mu$, it suffices to show 
\[
	u_\nu u_{(1^r)} = \sum_\mu \left(\sum_{\beta \dashv \alpha, |\alpha| - |\beta| = r, \beta \sim
	\nu} q^{d(\alpha,\beta)}\right) u_\mu
\]
where, for each $\mu$, $\alpha$ is any fixed permutation of $\mu$.

{\bf Step 3.} In other words, we have to verify for the structure constant $g_{\nu (1^r)}^\mu$ of
$\Hall(R)$, the formula 
\[
	g_{\nu (1^r)}^\mu = \sum_{\beta} q^{d(\alpha,\beta)}
\] 
where $\alpha$ is is a fixed permutation of $\mu$ and $\beta$ runs through all permutations of $\nu$
such that $\beta \dashv \alpha$, and $|\alpha| - |\beta| = r$. Recall that, for a fixed $R$-module
$M$ of type $\mu$ the number $g_{\nu (1^r)}^\mu$ is the number of submodules $N \subset M$ such that
$N$ has type $(1^r)$ and $M/N$ has type $\nu$. In particular, we have $\m N = 0$, so that $N$ is a
$r$-dimensional $k$-subspace of the $k$-vector space given by the socle $S = \{x \in M | \m x = 0\}$
of $M$. We denote by $G_r(S)$ the set of all $r$-dimensional subspaces of $S$. For every choice of a
basis $\{v_i | i \in I\}$ of $S$ together with a total order of $I$, the set $G_r(S)$ is the
disjoint union of {\em Schubert cells} defined as follows: We have one Schubert cell $C_J$ for every
$r$-subset $J$ of $I$. The elements of $C_J$ have coordinates $(c_{ij} \in k | j \in J, i \in I
\setminus J, i >j)$ where the subspace of $S$ corresponding to a coordinate $(c_{ij})$ has basis
$\{v_j + \sum_{i} c_{ij} v_i\}_{j \in J}$.

Therefore, we have
\[
	|C_J| = q^{d(J)}
\]
where $d(J)$ is the number of pairs $(i,j)$ such that $j \in J$, $i \in J \setminus I$, $i >j$.
Suppose
\[
	M \cong \bigoplus_{i \in I} R/ \m^{\alpha_i}
\]
We order $I$ so that $j < i $ iff $(j, \alpha_j) < (i, \alpha_i)$. Then we have a bijective
correspondence between subsets $J \subset I$ and compositions $\beta \dashv \alpha$ (where $\beta_i
= \alpha_{i-1}$ iff $i \in J$). Under this correspondence, we have $d(J) = d(\alpha, \beta)$.
Further, for all $k$-subspaces $N \subset S \subset M$ which lie in a fixed Schubert cell $C_J$, the
quotient $M/N$ has the same type $\lambda \sim \beta$. Therefore, only those Schubert cells so that
$M/N$ has type $\nu$ contribute to the count and we obtain precisely the claimed formula.
\end{proof}

\newpage
\section{Hall algebras via groupoids}
\label{section:groupoids}

\subsection{Groupoids of flags}
\label{subsec:flags}

A {\em groupoid} is a category in which all morphisms are invertible. 

\begin{exa} 
	\begin{enumerate}
		\item Let $G$ be a group. Then we can define a groupoid $BG$ which has one object
			$\ast$ and $\Hom(\ast, \ast) = G$ where the composition of morphisms is
			given by the group law. 
		\item Let $X$ be a topological space. Then we can define the fundamental groupoid
			$\Pi(X)$ whose objects are the points of $X$ and a morphism between $x$ and
			$y$ is a homotopy class of continuous paths connecting $x$ to $y$.
		\item Let $\C$ be a category. Then we can form the {\em maximal groupoid}
			$\C^{\cong}$ in $\C$ by simply discarding all noninvertible morphisms in
			$\C$.
	\end{enumerate}
\end{exa}

We introduce a family of groupoids which will be of central relevance for this section. Let $\C$ be a
proto-abelian category and let $\S_n = \S_n(\C)$ denote the maximal groupoid in the category of
diagrams of the form
\begin{equation}\label{eq:sdiag}
		\xymatrix{ 0 \ar@{^{(}->}[r] & A_{0,1} \ar@{^{(}->}[r] \ar@{->>}[d] & A_{0,2}
		\ar@{^{(}->}[r] \ar@{->>}[d] & \dots & A_{0,n-1} \ar@{^{(}->}[r] \ar@{->>}[d]&
		A_{0,n} \ar@{->>}[d]\\
		&0 \ar@{^{(}->}[r] & A_{1,2} \ar@{^{(}->}[r] \ar@{->>}[d] &\dots & A_{1,n-1} \ar@{^{(}->}[r] \ar@{->>}[d]
		& A_{1,n} \ar@{->>}[d]\\
		&& 0 & \ddots &\vdots & \vdots &\\
		&&&& A_{n-2,n-1} \ar@{^{(}->}[r]\ar@{->>}[d] & A_{n-2,n}\ar@{->>}[d]\\
		&& & & 0 \ar@{^{(}->}[r] & A_{n-1,n}\ar@{->>}[d]\\
		&& & & & 0
	}
\end{equation}
in $\C$ where $0$ is a fixed zero object in $\C$ and all squares are required to be biCartesian. 
The crucial observation is that the various groupoids $\S_{\bullet}$ are related to one another: 
for every $0 \le k \le n$, we have a functor
\[
	\partial_k : \S_n \to \S_{n-1}
\]
obtained by omitting in the diagram \eqref{eq:sdiag} the objects in the $k$th row and $k$th column and forming the composite of the
remaining morphisms. Similarly, for every $0 \le k \le n$, we have functors
\[
	\sigma_k : \S_{n} \to \S_{n+1}
\]
given by replacing the $k$th row by two rows connected via identity maps and replacing the $k$th
column by two columns connected via identity maps.

Recall the definition of the {\em simplex category} $\Delta$. The objects are given by the standard
ordinals $[n] = \{0,1,\dots,n\}$, $n \ge 0$, and a morphism from $[m]$ to $[n]$ is a map
\[
\{0,1,\dots,m\} \to \{0,1,\dots,n\}
\]
of underlying sets which preserves $\le$. 
\begin{prop}
	The collection of groupoids $\S_{\bullet}$ naturally forms a simplicial groupoid, i.e., a
	functor
	\[
		\S_{\bullet}: \Delta^{\op} \lra \Grpd
	\]
	with values in the category of groupoids.
\end{prop}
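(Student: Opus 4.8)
The plan is to exhibit the assignment $[n] \mapsto \S_n$ as a functor $\Delta^{\op} \to \Grpd$ by first defining it on the generating morphisms of $\Delta$ and then verifying the simplicial identities. Recall that $\Delta$ is generated, under composition, by the coface maps $\delta^k : [n-1] \to [n]$ (the order-preserving injection omitting the value $k$) and the codegeneracy maps $\sigma^k : [n+1] \to [n]$ (the order-preserving surjection repeating the value $k$), subject to the well-known cosimplicial relations. Dually, a simplicial object is determined by face functors $\partial_k = \S(\delta^k)$ and degeneracy functors $\sigma_k = \S(\sigma^k)$ satisfying the simplicial identities. The functors $\partial_k : \S_n \to \S_{n-1}$ and $\sigma_k : \S_n \to \S_{n+1}$ are already described in the excerpt, so the task reduces to two things: checking that these assignments are well-defined (land in $\S_{n\mp 1}$), and checking the simplicial identities.

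First I would verify well-definedness of the face functor $\partial_k$. Deleting the $k$th row and $k$th column from a diagram of the form \eqref{eq:sdiag} and composing the straddling morphisms produces a smaller staircase diagram; one must check that all squares in the new diagram remain biCartesian. The only new squares are those formed by composing a morphism into $A_{\bullet, k}$ with one out of it (and dually for rows), so the content is the standard pasting lemma: a square obtained by horizontally or vertically pasting two biCartesian squares is again biCartesian. Since in a proto-abelian category pushout and pullback squares coincide (axiom (3) of Definition \ref{defi:proto}), and pushouts paste to pushouts, the pasted square is biCartesian. For $\sigma_k$, inserting a duplicated row and column joined by identity maps is immediate, since squares one of whose parallel pairs of edges are identities are automatically biCartesian. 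Functoriality of each $\partial_k$ and $\sigma_k$ on morphisms (isomorphisms of diagrams) is clear, as an isomorphism of staircases restricts to an isomorphism of the truncated or expanded staircase.

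The main step is then to check the simplicial identities, for instance $\partial_i \partial_j = \partial_{j-1} \partial_i$ for $i < j$, together with the mixed and degeneracy relations. Here I expect the cleanest route is not to manipulate the staircase pictures directly but to reinterpret $\S_n$ functorially. A diagram of the form \eqref{eq:sdiag} is the same data as a functor from the poset of pairs $\{(i,j) : 0 \le i \le j \le n\}$ (with the evident order) into $\C$ that sends the diagonal to $0$ and carries every square to a biCartesian one. This poset is a well-known combinatorial gadget: it is the arrow category, or more precisely the twisted arrow/subdivision category, associated to $[n]$, and the assignment $[n] \mapsto (\text{this poset})$ is itself functorial in $[n]$. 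Granting this, the face and degeneracy functors are simply precomposition with the induced poset maps, and the simplicial identities for $\S_\bullet$ follow formally from the cosimplicial identities in $\Delta$, with no case-by-case diagram chase. The genuine obstacle is the bookkeeping needed to confirm that under this reinterpretation the concrete recipes for $\partial_k$ (delete row/column and compose) and $\sigma_k$ (duplicate with identities) agree with precomposition along $\delta^k$ and $\sigma^k$; once that identification is made, the biCartesian condition is preserved because the relevant poset maps send squares to squares (or degenerate them), and functoriality is automatic.
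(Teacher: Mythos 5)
Your argument is correct, and it is worth noting that the paper does not actually supply a proof of this proposition: it describes the face and degeneracy functors and then simply asserts the result. Your write-up fills in exactly the missing content, and your key structural move --- reinterpreting $\S_n$ as (the maximal groupoid of) diagrams indexed by the poset of pairs $(i,j)$ with $0 \le i \le j \le n$, functorially in $[n]$, so that the simplicial identities are inherited from the cosimplicial identities --- is precisely the formalization the paper itself adopts later, in Section \ref{section:derived}, where $\S_n$ is defined via $T^n = \Fun([1],[n])$. Two small points. First, the indexing poset is the arrow category $\Fun([1],[n])$, not the twisted arrow category (whose order is reversed in one coordinate); your hedge ``twisted arrow/subdivision'' should be resolved in favor of the former. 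Second, your functorial description of $\S_n$ omits the requirement, visible in diagram \eqref{eq:sdiag}, that morphisms in the $j$-direction be admissible monomorphisms and those in the $i$-direction admissible epimorphisms; for restriction along $\delta^k$ to preserve this you need these classes to be closed under composition (and to contain identities, for $\sigma_k$), which the paper's informal Definition \ref{defi:proto} tacitly assumes. With that caveat your pasting argument for the biCartesian condition, and the observation that squares with a parallel pair of identity edges are automatically biCartesian, are exactly what is needed.
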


\begin{rem}\label{rem:notation} For convenience reasons, we sometimes implicitly replace the simplex category by the
larger category of {\em all} finite nonempty linearly ordered sets. This is harmless since every
such a set is isomorphic to some standard ordinal $[n]$ via a unique isomorphism. For example, we
write $\S_{\{0,1,2\}} \to \S_{\{0,2\}}$ to refer to the face map $\partial_1: \S_2 \to \S_1$
leaving the natural inclusion $\{0,2\} \to \{0,1,2\}$ and the identification with $[1] \to [2]$
implicit.
\end{rem}

Given a diagram
\[
	\xymatrix{ \A \ar[r]^F & \C & \ar[l]_G \B}.
\]
of groupoids, we introduce the {\em $2$-pullback} to be the groupoid 
\[
	\A \times_{\C}^{(2)} \B 
\]
with objects given by triples $(a,b,\varphi)$ where $a \in \A$, $b \in \B$ and $\varphi: F(a)
\overset{\cong}{\to} G(b)$. A morphism $(a,b,\varphi) \to (a',b',\varphi')$ is given by a pair of
morphisms $f: a \to a'$, $g: b \to b'$ such that the diagram
\[
	\xymatrix{
		F(a) \ar[r]^{\varphi}\ar[d]^{F(f)} & G(b)\ar[d]^{G(g)}\\ 
		F(a') \ar[r]^{\varphi'} & G(b') }
\]
commutes. We call a diagram
\[
	\xymatrix{
		\X \ar[r]^{F'}\ar[d]_{G'} & \B \ar[d]^{G} \\
		\A \ar[r]^F & \C, \ultwocell<\omit>{\eta}
	} 
\]
where $\eta: F \circ G' \Rightarrow G \circ F'$ is a natural isomorphism, a {\em $2$-pullback diagram} if the functor 
\[
	\X \to \A \times^{(2)}_{\C} \B, x \mapsto (G'(x),F'(x), \eta(x))
\]
is an equivalence. The property which makes $2$-pullbacks
better-behaved than ordinary ones is that an equivalence of diagrams of groupoids induces an equivalence of their $2$-pullbacks.

It turns out that in some cases, ordinary pullback squares of categories are actually $2$-pullback
squares: A functor $F: \A \to \B$ is called {\em isofibration} if, for every object $a \in \A$ and every
isomorphism $\varphi: F(a) \overset{\cong}{\to} b$ in $\B$, there exists an isomorphism
$\widetilde{\varphi}: a \to a'$ in $\A$ such that $F(\widetilde{\varphi}) = \varphi$. 

\begin{prop} Let 
	\[
	\xymatrix{
		\X \ar[r]\ar[d] & \B \ar[d]\\
	\A \ar[r]^F & \C }
	\]
	be a commutative square of categories and assume that $F$ is an isofibration. Then the
	square is a $2$-pullback square.
\end{prop}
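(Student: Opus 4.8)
The plan is to show directly that the comparison functor
\[
	\Phi: \X \lra \A \times^{(2)}_{\C} \B, \quad x \mapsto (G'(x), F'(x), \id),
\]
is an equivalence of categories, where I write $G': \X \to \A$ and $F': \X \to \B$ for the left and top edges of the square and $G: \B \to \C$ for the right edge. Because the square commutes on the nose we have $F \circ G' = G \circ F'$, so the comparison isomorphism $\eta$ is the identity and $\Phi$ is well-defined. I will use the criterion that $\Phi$ is an equivalence precisely when it is fully faithful and essentially surjective, and treat these two properties separately. Throughout I identify $\X$ with the ordinary pullback $\A \times_{\C} \B$, whose objects are pairs $(a,b)$ with $F(a) = G(b)$ and whose morphisms are pairs $(f,g)$ with $F(f) = G(g)$.

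Fully faithfulness is formal and does not use the isofibration hypothesis. Given objects $(a,b)$ and $(a',b')$ of $\X$, a morphism in $\X$ is a pair $(f,g)$ with $F(f) = G(g)$, while a morphism $\Phi(a,b) \to \Phi(a',b')$ in the $2$-pullback is a pair $(f,g)$ satisfying $G(g) \circ \id = \id \circ F(f)$, that is, again $F(f) = G(g)$. The two hom-sets therefore coincide and $\Phi$ acts as the identity between them, so $\Phi$ is fully faithful.

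The substance of the argument, and the only place the isofibration enters, is essential surjectivity. Let $(a,b,\varphi)$ be an object of $\A \times^{(2)}_{\C} \B$, so that $\varphi: F(a) \overset{\cong}{\to} G(b)$. Applying the isofibration property of $F$ to the object $a$ and the isomorphism $\varphi$, I obtain an isomorphism $\widetilde{\varphi}: a \overset{\cong}{\to} a'$ in $\A$ with $F(\widetilde{\varphi}) = \varphi$; in particular $F(a') = G(b)$, so $(a',b)$ is an object of the ordinary pullback $\X$ and $\Phi(a',b) = (a',b,\id)$. I then check that the pair $(\widetilde{\varphi}, \id_b)$ defines an isomorphism $(a,b,\varphi) \to (a',b,\id)$ in the $2$-pullback: the required compatibility is $G(\id_b) \circ \varphi = \id \circ F(\widetilde{\varphi})$, which holds since both sides equal $\varphi$. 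As $\widetilde{\varphi}$ and $\id_b$ are isomorphisms, this exhibits $(a,b,\varphi)$ as isomorphic to an object in the image of $\Phi$.

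The main, and really the only, obstacle is the bookkeeping in essential surjectivity: one must lift $\varphi$ in the correct direction and confirm that the resulting object $(a',b)$ satisfies the strict equality $F(a') = G(b)$ demanded by the ordinary pullback, so that the lifted morphism genuinely lands in the $2$-pullback. Once this is arranged, fully faithfulness and essential surjectivity together show that $\Phi$ is an equivalence, which is precisely the assertion that the square is a $2$-pullback square.
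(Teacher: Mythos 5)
Your proof is correct. The paper states this proposition without proof, so there is nothing to compare against, but your argument is the standard one and it is complete: the comparison functor is the identity on hom-sets (fully faithfulness is purely formal), and the isofibration hypothesis is used exactly once, to lift the isomorphism $\varphi\colon F(a)\to G(b)$ to an isomorphism $\widetilde{\varphi}\colon a\to a'$ with $F(a')=G(b)$ on the nose, which produces an object of the strict pullback isomorphic to $(a,b,\varphi)$. You were also right to read ``commutative square'' as ``ordinary pullback square'' (i.e.\ to identify $\X$ with $\A\times_{\C}\B$), as the surrounding text intends; the literal statement would be false for an arbitrary commutative square.
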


We come to a key property of the simplicial groupoid $\S_{\bullet}$: it satisfies the {\em $2$-Segal
conditions} which we now introduce. Let $\X_{\bullet}$ be a simplicial object in $\Grpd$.
\begin{enumerate}
	\item Consider a planar $n+1$-gon $P$ with vertices labelled cyclically by the set $\{0,1,\dots,n\}$. Let $i <
		j$ be the vertices of a diagonal of $P$ which subdivides the polygon into two polygons with labels
		$\{0,1,\dots,i,j,j+1,\dots,n\}$ and $\{i,i+1,\dots,j\}$. We obtain a corresponding
		commutative square of groupoids
		\begin{equation}\label{eq:segal}
			\xymatrix{ \X_{\{0,1,\dots,n\}} \ar[d]\ar[r] & \X_{\{0,1,\dots,i,j,\dots,n\}}\ar[d]\\
			\X_{\{i,i+1,\dots,j\}} \ar[r] & \X_{\{i,j\}}. }
		\end{equation}
	\item For every $0 \le i < n$, there is commutative square
		\begin{equation}\label{eq:unital}
			\xymatrix{ \X_{\{0,1,\dots,n-1\}} \ar[d]^{\sigma_i}\ar[r] & \X_{\{i\}}\ar[d]\\
			\X_{\{0,1,\dots,n\}} \ar[r] & \X_{\{i,i+1\}} }
		\end{equation}
		where $\sigma_i$ denotes the $i$th degeneracy map.
\end{enumerate}

The following definition was introduced in \cite{dk-segal} (an equivalent condition was independently discovered and studied in \cite{kock-decomposition}).

\begin{defi}\label{defi:2segal} A simplicial object $\X_{\bullet}$ in $\Grpd$ is called {\em $2$-Segal} if, 
	\begin{enumerate}
		\item for every polygonal subdivision, the corresponding square $\eqref{eq:segal}$
			is a $2$-pullback square,
		\item all squares \eqref{eq:unital} are $2$-pullback squares.
	\end{enumerate}
\end{defi}

\begin{rem}\label{rem:segal} To explain the terminology, recall that a simplicial set $K$ is called {\em Segal} 
	if, for every $0 < k < n$, the square
	\[
		\xymatrix{
			K_{\{0,1,\dots,n\}} \ar[r] \ar[d] & K_{\{0,1,\dots,k\}} \ar[d]\\
			K_{\{k,k+1,\dots,n\}} \ar[r] & K_{\{k\}}}
	\]
	is a pullback square. We can interpret the datum $0 < k < n$ as a subdivision of the
	interval $[0,n]$ into intervals $[0,k]$ and $[k,n]$ so that the $2$-Segal condition is a
	$2$-dimensional analog of the Segal condition.
\end{rem}

\begin{theo} The simplicial groupoid $\S_{\bullet}$ of flags in a proto-abelian category $\C$ is $2$-Segal.
\end{theo}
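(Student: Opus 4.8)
The plan is to first recast the groupoids $\S_n$ in terms of flags. An object of $\S_n$ is, by the biCartesian requirement on the array \eqref{eq:sdiag}, equivalent data to a single flag of admissible monomorphisms $0 = A_{0,0} \hookrightarrow A_{0,1} \hookrightarrow \cdots \hookrightarrow A_{0,n}$: the remaining entries $A_{i,j}$ are forced to be the successive cofibers, and by the pushout-completion axiom (2a) together with the biCartesian axiom (3) they are determined up to unique isomorphism. Thus the forgetful functor from $\S_n$ to the maximal groupoid of length-$n$ flags is an equivalence, and under this identification the face map $\partial_k$ either deletes a term of the flag (for $0<k<n$), quotients out the bottom (for $k=0$), or forgets the top (for $k=n$). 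I would then verify the two families of conditions in Definition \ref{defi:2segal} through the comparison functor into the $2$-pullback, repeatedly using the criterion that an ordinary pullback square along an isofibration is automatically a $2$-pullback, so that the $2$-categorical bookkeeping reduces to checking a universal property on objects and morphisms.

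\textbf{Reduction to elementary diagonals.} For the polygonal conditions \eqref{eq:segal}, I would invoke the standard pasting argument of \cite{dk-segal}: a composite of $2$-pullback squares is again a $2$-pullback, and any diagonal subdivision of the $(n+1)$-gon can be refined to a triangulation, so it suffices to treat the diagonals that cut off a single triangle, for instance $(0,2)$ and $(1,n)$. These reduce by induction to the two diagonals of the square $(n=3)$, namely the squares
\[
	\xymatrix{ \S_{\{0,1,2,3\}} \ar[d]\ar[r] & \S_{\{0,1,3\}}\ar[d]\\ \S_{\{1,2,3\}} \ar[r] & \S_{\{1,3\}} } \qquad\qquad \xymatrix{ \S_{\{0,1,2,3\}} \ar[d]\ar[r] & \S_{\{0,2,3\}}\ar[d]\\ \S_{\{0,1,2\}} \ar[r] & \S_{\{0,2\}}. }
\]

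\textbf{The core verification.} For the first square above, an object of the $2$-pullback $\S_{\{0,1,3\}} \times^{(2)}_{\S_{\{1,3\}}} \S_{\{1,2,3\}}$ consists of a flag $A_{0,1} \hookrightarrow A_{0,3}$, a one-step refinement $B \hookrightarrow A_{0,3}/A_{0,1}$ of the subquotient, and an identification of the two copies of $A_{0,3}/A_{0,1}$. I would reconstruct the missing term $A_{0,2}$ as the preimage of $B$ under the admissible epimorphism $A_{0,3} \twoheadrightarrow A_{0,3}/A_{0,1}$, which exists by the pullback-completion axiom (2b), with the resulting square biCartesian by axiom (3); uniqueness of this completion up to unique isomorphism yields essential surjectivity and full faithfulness of the comparison functor, hence an equivalence. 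The second square is handled dually, the missing cofiber being produced by the pushout-completion axiom (2a) — so both halves of the proto-abelian axioms are genuinely used. The unitality squares \eqref{eq:unital} are easier: since $\S_{\{i\}}$ is a point selecting the zero object of $\S_{\{i,i+1\}} \simeq \C^{\cong}$, the square expresses that a flag in $\S_n$ whose $i$-th step $A_{i,i+1}$ vanishes is the same datum as a flag in $\S_{n-1}$ (delete the repeated term), which is again an equivalence onto the relevant $2$-pullback.

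\textbf{Main obstacle.} The formal parts — the isofibration reduction and the pasting of $2$-pullbacks — are routine. The essential content, and the step I expect to require the most care, is verifying that the reconstruction of $A_{0,2}$ (and dually the cofiber) is functorial on the level of groupoids and realizes an honest equivalence onto the $2$-pullback; this is exactly where the existence-and-uniqueness of biCartesian completions furnished by the proto-abelian axioms must be deployed, and where one must check compatibility with all isomorphisms rather than merely with objects.
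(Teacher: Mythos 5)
Your proof is correct and follows essentially the same route as the paper's: reduce the $2$-pullback condition to an ordinary pullback via the isofibration criterion, identify the comparison functor into the fiber product as a forgetful functor that drops the diagram entries indexed by diagonals crossing $(i,j)$, and recover those entries using the biCartesian completion axioms (2a)/(2b) of a proto-abelian category. The only organizational difference is that you first reduce to triangle-cutting diagonals by pasting $2$-pullbacks, whereas the paper treats an arbitrary diagonal $(i,j)$ directly by filling in all crossed entries at once; your explicit identification of $\S_n$ with the groupoid of flags and the reconstruction of $A_{0,2}$ as a preimage under the quotient map is exactly the content the paper leaves implicit.
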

\begin{proof} We note that all maps in the diagram \eqref{eq:segal} are isofibrations so that it
	suffices to check that the squares are ordinary pullback squares. The functor
	\[
		\S_{\{0,1,\dots,n\}} \lra \S_{\{0,1,\dots,i,j,\dots,n\}}\times_{\S_{\{i,j\}}} \S_{\{i,i+1,\dots,j\}}
	\]
	is a forgetful functor which forgets those objects in the diagram \eqref{eq:sdiag} whose
	indices $(x,y)$ correspond to diagonals of $P_n$ which cross the diagonal $(i,j)$. But
	these objects can be filled back in by forming pullbacks or pushouts, using the axioms of a
	proto-abelian category.
	We leave the verification of \eqref{eq:unital} to the reader.
\end{proof}

Below, we will interpret the lowest $2$-Segal conditions \eqref{eq:segal} and \eqref{eq:unital} as associativity and
unitality, respectively, of the {\em abstract Hall algebra} of a proto-abelian category $\C$.

\subsection{Spans of groupoids and the abstract Hall algebra}
\label{subsec:spans}

We introduce the category $\Span(\Grpd)$ of {\em spans in groupoids}. The objects are given by groupoids. 
The set of morphisms between groupoids $\A,\B$, is defined to be
\[
	\Hom(\A,\B) = \left\{ \vcenter{\xymatrix{ & \ar[dl] \X \ar[dr] &\\ \A & & \B}} \right\}/\sim
\]
where two {\em spans} $\A \leftarrow \X \rightarrow \B$ and $\A \leftarrow \X' \rightarrow \B$ are
considered equivalent if there exists a diagram
\[
	\xymatrix{
		& \ar[dl] \ddltwocell<\omit> \X \ar[dd]|F \ar[dr] & \\
		\A & & \B\\
	&\X'\ar[ul] \ar[ur] & \uultwocell<\omit> }
\]
where $F$ is an equivalence. The composition of
morphisms is given by forming $2$-pullbacks: Given morphisms $f:\A \to \B$ and $g:\B \to \C$ in
$\Span(\Grpd)$, we
represent them by spans $\A \leftarrow \X \rightarrow \B$ and $\B \leftarrow \Y \rightarrow \C$ and
form the diagram
\[
	\xymatrix{
		& &\ar[dl] \Z\ar[dr] & &\\ 
		& \X \ar[dr] \ar[dl]& & \Y \ar[dl] \ar[dr]& \\
	\A & & \B\uutwocell<\omit> & & \C }
\]
where the square is a $2$-pullback square. We then define the composite $g \circ f$ to be the morphism from $\A$
to $\C$ represented by the span
\[\xymatrix{ & \ar[dl] \Z \ar[dr] &\\ \A & & \B.}
\]
It follows from the invariance properties of $2$-pullbacks that this operation is
well-defined. 

The category $\Span(\Grpd)$ has a natural monoidal structure: The tensor
product is defined on objects via $\A \otimes \B = \A \times \B$ and on morphisms via
\[
	\vcenter{\xymatrix{ & \ar[dl] \X \ar[dr] &\\ \A & & \B}} \otimes \vcenter{\xymatrix{ & \ar[dl] \X' \ar[dr] &\\
	\A' & & \B'}} = \vcenter{\xymatrix{ & \ar[dl] \X \times \X' \ar[dr] &\\ \A \times \A' & & \B \times
	\B'.}}
\]

\begin{theo}\label{theo:abshall}
	Let $\C$ be a proto-abelian category and let $\S_{\bullet}$ be the corresponding groupoids
	of flags in $\C$. 
	The morphisms in $\Span(\Grpd)$ represented by the spans
		\[
			\mu: \vcenter{\xymatrix{ & \ar[dl]_G \S_{\{0,1,2\}} \ar[dr]^F & \\
			\S_{\{0,1\}} \times \S_{\{1,2\}} & & \S_{\{0,2\}} }}
		\]
	and 
		\[
			e: \vcenter{\xymatrix{ & \ar[dl]_{\id} \S_0 \ar[dr]^{0 \mapsto 0}
		&\\ \S_0 & & \S_1}}
		\]
	make $\S_1$ an algebra object in $\Span(\Grpd)$. We call $(\S_1,\mu,e)$ the
	{\em abstract Hall algebra} of $\C$.
\end{theo}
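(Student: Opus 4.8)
The plan is to read off the two algebra axioms—associativity and unitality—directly from the two families of $2$-Segal squares established in the previous theorem. Recall that composition in $\Span(\Grpd)$ is computed by forming $2$-pullbacks, and that equivalent spans represent the same morphism; since the squares \eqref{eq:segal} and \eqref{eq:unital} are already known to be $2$-pullbacks, the entire argument reduces to formal manipulation of spans, with the geometric input already supplied.

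For associativity I would work with the $4$-gon on the vertex set $\{0,1,2,3\}$, whose short edges $\{0,1\}$, $\{1,2\}$, $\{2,3\}$ carry the three input copies of $\S_1$ and whose long edge $\{0,3\}$ carries the output. The left bracketing $\mu\circ(\mu\otimes\id)$ multiplies first along the triangle $\{0,1,2\}$ and then along $\{0,2,3\}$; composing the two spans amounts to forming the $2$-pullback $\S_{\{0,1,2\}}\times^{(2)}_{\S_{\{0,2\}}}\S_{\{0,2,3\}}$, after cancelling the common $\S_{\{2,3\}}$-factor carried along by the identity. By the $2$-Segal condition for the diagonal $(0,2)$ this $2$-pullback is canonically equivalent to $\S_{\{0,1,2,3\}}=\S_3$. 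Symmetrically, the right bracketing $\mu\circ(\id\otimes\mu)$ uses the triangles $\{1,2,3\}$ and $\{0,1,3\}$, and the $2$-Segal condition for the diagonal $(1,3)$ identifies its apex $\S_{\{1,2,3\}}\times^{(2)}_{\S_{\{1,3\}}}\S_{\{0,1,3\}}$ with the same $\S_3$. Having matched the two apices, I would then verify that the two composites coincide as morphisms $\S_{\{0,1\}}\times\S_{\{1,2\}}\times\S_{\{2,3\}}\to\S_{\{0,3\}}$, not merely that their apices agree: both legs are induced by simplicial structure maps—the left leg is the product of the three face maps recording the short edges, the right leg is the face map to the edge $\{0,3\}$—and these are determined by the simplicial identities independently of the bracketing. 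Once the canonical equivalences with $\S_3$ are seen to be compatible with these projections, which follows from the functoriality of $\S_\bullet$ together with the uniqueness statement of Remark \ref{rem:notation}, the two bracketings represent the same element of $\Hom(\S_1^{\otimes 3},\S_1)$.

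For unitality I would use the unital squares \eqref{eq:unital} in the lowest case $n=2$. The unit $e$ is the span whose nontrivial leg is the degeneracy $\sigma_0\colon\S_0\to\S_1$ picking out the zero object. Left unitality $\mu\circ(e\otimes\id)$ produces the $2$-pullback $\S_{\{0\}}\times^{(2)}_{\S_{\{0,1\}}}\S_{\{0,1,2\}}$, whose objects are flags with $A_{0,1}\cong 0$; the square \eqref{eq:unital} with $i=0$ identifies this with $\S_{\{1,2\}}\simeq\S_1$ via $\sigma_0$, and tracing the output leg to the edge $\{0,2\}$ shows the composite is the identity span. Right unitality is entirely analogous using \eqref{eq:unital} with $i=1$ and $\sigma_1$.

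The main obstacle will not be any of the $2$-pullback identifications—those are exactly the $2$-Segal conditions already in hand—but rather the bookkeeping in the associativity step: upgrading the equality of apices to an equality of spans by checking that the two canonical equivalences with $\S_3$ respect both the source and target legs. Making this compatibility precise is where one must carefully invoke the invariance of $2$-pullbacks under equivalence and the coherence of the simplicial face maps, rather than merely comparing objects of the apex groupoids.
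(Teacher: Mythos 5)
Your proposal is correct and follows essentially the same route as the paper: both bracketings are identified with the span having apex $\S_{\{0,1,2,3\}}$ via the two $2$-Segal squares for the two triangulations of the square (after cancelling the spectator $\S_{\{2,3\}}$, resp.\ $\S_{\{0,1\}}$, factor in the $2$-pullback), and unitality is read off from the lowest squares \eqref{eq:unital}. Your explicit insistence on checking that the two canonical equivalences with $\S_3$ intertwine the source and target legs, not just the apices, is a point the paper's proof leaves implicit, and is handled exactly as you say, by the simplicial functoriality of $\S_{\bullet}$.
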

\begin{proof}
	We have to verify $\mu \circ (\mu \otimes \id) = \mu \circ (\id \otimes \mu)$. To
	compute the left-hand side, we consider the commutative diagram 
	\[
		\xymatrix{
			\S_{\{0,1,2,3\}} \ar[d] \ar[r]& \S_{\{0,2,3\}} \ar[r]^F\ar[d]^G &
			\S_{\{0,3\}}\\
			\S_{\{0,1,2\}} \times \S_{\{2,3\}} \ar[r]^{F \times \id} \ar[d]^{G \times
			\id}& \S_{\{0,2\}} \times \S_{\{2,3\}} & \\
			\S_{\{0,1\}} \times \S_{\{1,2\}} \times \S_{\{2,3\}} & & }.
	\]
	We claim that the left-hand square is a pullback square: There is a sequence of natural
	functors
	\[
		\S_{\{0,1,2,3\}} \lra (\S_{\{0,1,2\}} \times \S_{\{2,3\}})
		\times^{(2)}_{\S_{\{0,2\}} \times \S_{\{2,3\}}} \S_{\{0,2,3\}}
		\lra \S_{\{0,1,2\}} \times^{(2)}_{\S_{\{0,2\}}} \S_{\{0,2,3\}}
	\]
	where the composite is an equivalence by the $2$-Segal condition \eqref{eq:segal}
	corresponding to the triangulation $\{0,1,2\}$,$\{0,2,3\}$ of the square, and the second
	functor is an equivalence by direct verification. It follows that the first functor is an
	equivalence which shows the claim. Therefore, we have 
	\[
		\mu \circ (\mu \otimes \id) = \vcenter{\xymatrix{ & \S_{\{0,1,2,3\}} \ar[dl]
		\ar[dr] &\\ \S_{\{0,3\}} \times
		\S_{\{0,1\}} \times \S_{\{1,2\}}& & \S_{\{2,3\}}.}}
	\]
	Similarly, using the $2$-Segal condition for the second triangulation of the square, we obtain
	that 
	\[
		\mu \circ (\id \otimes \mu) = \vcenter{\xymatrix{ & \S_3 \ar[dl]_R \ar[dr]^T &\\ \S_1 \times
		\S_1 \times \S_1 & & \S_1.}}
	\]
	so that we deduce associativity. Unitality follows similarly from the lowest two conditions
	\eqref{eq:unital}.
\end{proof}

\subsection{Groupoid functions}
\label{subsec:groupfunc}

We explain how, assuming $\C$ to be finitary, we can obtain the usual Hall algebra from the abstract
one by passing to groupoid functions.

Let $K$, $S$ be finite sets. We have 
\[
	|K \amalg S|  = |K|+|S|
\]
and 
\[
	|K \times S|  = |K| |S|
\]
so that the categorical operations $\amalg$ and $\times$ yield upon application of $|-|$ the
numerical operations of addition and multiplication. One may wonder if there is a categorical
analog of division (or subtraction). 

\begin{exa} 
	Consider the set $K = \{1,2,3,4\}$ equipped with the action of the cyclic group $C_2 =
	<\tau>$ of order $2$ where $\tau$ acts via the permutation $(14)(23)$. Then we can form the
	orbit set $K/C_2$ and
	have
	\[
		| K / C_2 | = 2 = |K| / |C_2|
	\]
	so that categorical construction of forming the quotient yields division by $2$ upon
	application of $|-|$. However, this interpretation fails as soon as the group action has
	nontrivial stabilizers: Letting the group $C_2$ act on the set $S  =\{1,2,3\}$ via the
	permutation $(13)$, we obtain a quotient $S / C_2$ of cardinality $2 \ne \frac{3}{2}$.
\end{exa}

We will now define a notion of cardinality for groupoids which solves the issue of the example. From
now on we use the notation $\pi_0(\A)$ for the set of isomorphism classes of objects in $\A$.
A groupoid $\A$ is called {\em finite} if
\begin{enumerate}
	\item the set $\pi_0(\A)$ of isomorphism classes of objects is finite,
	\item for every object $a \in \A$, the set of automorphisms of $a$ is finite.
\end{enumerate}
Given a finite groupoid $\A$, we introduce the {\em groupoid cardinality}
\[
	| \A | = \sum_{[a] \in \pi_0(\A)} \frac{1}{|\Aut(a)|}.
\]

\begin{rem} It is immediate from the definition, that groupoid cardinality is invariant under
	equivalences of finite groupoids.
\end{rem}

\begin{exa}
	Any finite set $K$ can be interpreted as a {\em discrete} groupoid with $K$ as
	its set of objects and morphisms given by identity morphisms only. The
	groupoid cardinality of the discrete groupoid associated with $K$ agrees
	with the cardinality of the set $K$.
\end{exa}

Let $K$ be a finite set equipped with a right action of a finite group $G$. We define
the {\em action groupoid $K // G$} to have $K$ as its set of objects and morphisms
between two elements $k$ and $k'$ given by elements $g \in G$ such that $k.g
= k'$. 

\begin{prop}
	We have 
	\[
		| K // G | = |K|/|G|.
	\]
\end{prop}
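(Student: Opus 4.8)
The plan is to compute the groupoid cardinality directly from the definition by understanding the structure of the action groupoid $K // G$. First I would recall that $|K // G| = \sum_{[k] \in \pi_0(K // G)} \frac{1}{|\Aut(k)|}$, so the task reduces to identifying two things: the isomorphism classes of objects in $K // G$, and the automorphism group of each object. By construction of the action groupoid, two elements $k, k'$ are isomorphic precisely when there is a $g \in G$ with $k.g = k'$, i.e. the isomorphism classes are exactly the orbits of the $G$-action on $K$. Thus $\pi_0(K // G) = K/G$, the orbit set. Similarly, the automorphisms of an object $k$ are the $g \in G$ with $k.g = k$, which is precisely the stabilizer subgroup $G_k = \Stab_G(k)$, so $\Aut(k) = G_k$.

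With these identifications in hand, the computation becomes
\[
	| K // G | = \sum_{[k] \in K/G} \frac{1}{|G_k|}.
\]
The key step is then to invoke the orbit-stabilizer theorem: for each orbit $\O_k$ of a representative $k$, we have $|\O_k| = |G|/|G_k|$, equivalently $\frac{1}{|G_k|} = \frac{|\O_k|}{|G|}$. Substituting gives
\[
	\sum_{[k] \in K/G} \frac{1}{|G_k|} = \sum_{[k] \in K/G} \frac{|\O_k|}{|G|} = \frac{1}{|G|} \sum_{[k] \in K/G} |\O_k| = \frac{|K|}{|G|},
\]
where the last equality holds because $K$ is the disjoint union of its orbits, so the orbit sizes sum to $|K|$.

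The main (and only genuine) obstacle is verifying the two structural identifications carefully, namely that $\pi_0(K // G)$ is the orbit set and that $\Aut(k)$ is the stabilizer; both follow immediately from unwinding the definition of morphisms in $K // G$ as group elements acting appropriately, so there is no real difficulty here. One should note that finiteness of $K$ and $G$ guarantees that $K // G$ is a finite groupoid (finitely many orbits, each stabilizer finite), so the groupoid cardinality is well-defined and the manipulation of the finite sum is legitimate. Everything else is the elementary orbit-stabilizer counting, which I would state but not belabor.
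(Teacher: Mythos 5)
Your proof is correct and follows essentially the same route as the paper's: identify $\pi_0(K//G)$ with the orbit set and $\Aut(k)$ with the stabilizer $G_k$, then apply orbit--stabilizer and sum over orbits. Nothing to add.
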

\begin{proof}
	The set of isomorphism classes $\pi_0(K // G)$ can be identified with the set of orbits of the
	action of $G$ on $K$. The automorphism group of an object $k$ of $K // G$ coincides with the
	stabilizer group $G_k = \{g \in G | k.g = k\} \subset G$. The orbit of an element $k$ under
	$G$ can be identified with the quotient set $G / G_k$ of cardinality $|G|/|G_k|$. We compute
	\begin{align*}
		| K // G | & = \sum_{[k] \in \pi_0(K // G)} \frac{1}{|G_k|}\\
		& = \frac{1}{|G|} \sum_{[k] \in \pi_0(K // G)} \frac{|G|}{|G_k|}\\
		& = \frac{|K|}{|G|} 
	\end{align*}
	where the last equality follows since the disjoint union of the orbits yields the set $K$.
\end{proof}

Given a set $K$ and a function $\varphi: K \to \QQ$ with finite support, we can introduce the integral
\[
	\int_K \varphi = \sum_{k \in K} \varphi(k).
\]
If $K$ is finite, then we have $\int_K \one = |K|$ where $\one$ denotes the constant function on $K$ with value $1$.  We
give a generalization to groupoids. 

Given a groupoid $\A$, we define $\F(\A)$ to be the $\QQ$-vector
space of functions $\varphi: \ob \A \to \QQ$ which are 
\begin{enumerate}
	\item constant on isomorphism classes, 
	\item nonzero on only finitely many isomorphism classes.
\end{enumerate}

We call $\A$ {\em locally finite} if every connected component $\A(a)$ is finite.
Given a locally finite groupoid $\A$ and $\varphi \in \F(\A)$, we define the {\em groupoid integral}
\[
	\int_{\A} \varphi = \sum_{[a] \in \pi_0(\A)} \frac{\varphi(a)}{|\Aut(a)|}.
\]
Note that, if $\A$ is finite, then we have $\int_{\A} \one = |\A|$.

We further introduce a relative version of the groupoid integral given by integration along the
fibers: A functor $F: \A \to \B$ of groupoids is called 
\begin{enumerate}
	\item {\em finite} if every $2$-fiber of $F$ is finite,
	\item {\em locally finite} if, for every $a \in \A$, the restriction of $F$ to $\A(a)$ is
		finite, 
	\item {\em $\pi_0$-finite} if the induced map of sets $\pi_0(\A) \to \pi_0(\B)$ has finite fibers.
\end{enumerate}

Given a locally finite functor $F: \A \to \B$ and a function $\varphi \in \F(\A)$, we define the
{\em pushforward} $F_! \varphi \in \F(\B)$ via
\[
	F_!\varphi(b) := \int_{\A_b} \varphi_{|\A_b}
\]
where $\A_b$ is the $2$-fiber of $F$ at $b$ and $\varphi_{|\A_b}$ denotes the pullback of $\varphi$ along the
natural functor $\A_b \to \A$.
We obtain a $\QQ$-linear map
\[
	F_!: \F(\A) \lra \F(\B).
\]

\begin{exa}
	For a locally finite groupoid $\A$ the constant functor $F: \A \to \{\ast\}$ is locally finite and we have
	$F_! \varphi (\ast) = \int_{\A} \varphi$.
\end{exa}

Given a $\pi_0$-finite functor $F: \A \to \B$ and a function $\varphi \in \F(\B)$, we define the
{\em pullback} $F^*\varphi \in \F(\A)$ via
\[
	F^*\varphi(a) := \varphi(F(a)).
\]
We obtain a $\QQ$-linear map
\[
	F^*: \F(\B) \lra \F(\A).
\]

The central properties of the pullback and pushforward operations are captured in the following Proposition.

\begin{prop}\label{prop:basefun}
	\begin{enumerate}
		\item {\em Functoriality.} 
			\begin{enumerate}
				\item Let $F: \A \to \B$ and $G: \B \to \C$ be $\pi_0$-finite functors
			of groupoids. Then the composite $G \circ F$ is $\pi_0$-finite and we have
			\[
				(G \circ F)^* = F^* \circ G^*.
			\]
				\item Let $F: \A \to \B$ and $G: \B \to \C$ be locally finite functors
			of groupoids. Then the composite $G \circ F$ is locally finite and we have
			\[
				(G \circ F)_! = G_! \circ F_!.
			\]
			\end{enumerate}
		\item {\em Base change.} Let 
			\begin{equation}\label{eq:2pulkan}
					\xymatrix{ \X \ar[d]_{G'}\ar[r]^{F'} & \B\ar[d]^G \\
					\A \ar[r]^F & \C \ultwocell<\omit>}
			\end{equation}
			be a $2$-pullback square with $F$ locally finite and $G$ $\pi_0$-finite.
			Then $F'$ is locally finite, $G'$ is $\pi_0$-finite, and we have
			\[
				(F')_! \circ (G')^* = G^* \circ F_!.
			\]
	\end{enumerate}
\end{prop}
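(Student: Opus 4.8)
The plan is to prove the three assertions in sequence, checking first that each claimed functor has the finiteness property that makes the relevant operation defined, and then that the formulas hold objectwise by unwinding the definitions of the groupoid integral.

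For part (1)(a), I would first observe that if $F: \A \to \B$ and $G: \B \to \C$ are $\pi_0$-finite then $\pi_0(G \circ F) = \pi_0(G) \circ \pi_0(F)$ has finite fibers, since a composite of maps of sets with finite fibers has finite fibers; this shows $G \circ F$ is $\pi_0$-finite. The identity $(G \circ F)^* = F^* \circ G^*$ is then immediate from the pointwise definition $F^*\varphi(a) = \varphi(F(a))$, since both sides send $\varphi$ to $a \mapsto \varphi(G(F(a)))$. Part (1)(b) is the genuinely substantive functoriality statement. The key point is to identify the $2$-fiber $\X_c$ of $G \circ F$ over an object $c \in \C$ with an iterated fiber: there is an equivalence $(G \circ F)_c \simeq \coprod_{[b]} \A_b \times^{(2)}_{(\text{over } \B_c)} (\text{datum})$, or more cleanly, a fibration of groupoids $\A_c \to \B_c$ whose fiber over $b \in \B_c$ is $\A_b$. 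Granting this, the formula $(G \circ F)_! = G_! \circ F_!$ reduces to Fubini for the groupoid integral, i.e. $\int_{\A_c} \psi = \int_{\B_c} \big( b \mapsto \int_{\A_b} \psi_{|\A_b}\big)$, which one verifies by writing both sides as sums over isomorphism classes weighted by inverse automorphism orders and using multiplicativity of $|\Aut(-)|$ along the fibration.

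For part (2), the base change formula, I would proceed objectwise: fix $a \in \A$ and compute both $\big((F')_! (G')^* \varphi\big)(a)$ and $\big(G^* F_! \varphi\big)(a)$ as groupoid integrals. The right-hand side is $F_!\varphi(F(a)) = \int_{\B_{F(a)}} \varphi_{|\B_{F(a)}}$, the integral of $\varphi$ over the $2$-fiber of $G$ above $F(a) \in \C$. The left-hand side is $\int_{\X_a} (G')^*\varphi_{|\X_a}$, the integral over the $2$-fiber of $F'$ above $a$. The heart of the argument is the canonical equivalence of groupoids
\[
	\X_a \;\simeq\; \B_{F(a)}
\]
which is exactly what the $2$-pullback hypothesis on the square \eqref{eq:2pulkan} provides: the $2$-fiber of $F'$ over $a$ is equivalent to the $2$-fiber of $G$ over $F(a)$, compatibly with the restriction of $\varphi$ (pulled back along $G'$ on the left, directly on the right). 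Here the invariance of groupoid cardinality under equivalence, already recorded in the excerpt, guarantees the two integrals agree. I would also note that this same equivalence, together with the local finiteness of $F$, shows $F'$ is locally finite, and together with $\pi_0$-finiteness of $G$ shows $G'$ is $\pi_0$-finite, so that both operations in the claimed identity are in fact defined.

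The main obstacle I anticipate is establishing the two fiber equivalences cleanly — the identification $(G \circ F)_c \simeq$ (iterated fiber) for (1)(b), and $\X_a \simeq \B_{F(a)}$ for (2) — keeping careful track of the coherence isomorphisms $\varphi$ in the definition of the $2$-fiber. The arithmetic (Fubini for groupoid integrals, cancellation of automorphism orders) is routine once the equivalences are in hand; the delicate part is verifying that the structure maps and natural isomorphisms match up so that \emph{functions restrict correctly} across the equivalence, which is precisely where the distinction between ordinary pullbacks and $2$-pullbacks becomes essential and where the stated invariance of $2$-pullbacks under equivalence does the work.
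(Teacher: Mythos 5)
The paper states this proposition without proof, so there is no argument of the author's to compare against; your strategy --- verify the finiteness conditions first, then check the identities objectwise, reducing (1)(b) to a Fubini statement for the fibration of $2$-fibers $(G\circ F)_c \to \B_c$ with fibers $\A_b$, and (2) to an equivalence between $2$-fibers of parallel legs of the $2$-pullback square --- is exactly the standard argument, and parts (1)(a) and (1)(b) are fine as outlined.

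In part (2), however, the computation as written does not typecheck with the labelling of the square. Both $(F')_!\circ(G')^*$ and $G^*\circ F_!$ are maps $\F(\A)\to\F(\B)$, so they must be evaluated at an object $b\in\B$, not at $a\in\A$. With $F'\colon\X\to\B$ parallel to $F\colon\A\to\C$ and $G'\colon\X\to\A$ parallel to $G\colon\B\to\C$, one has $(G^*F_!\varphi)(b)=F_!\varphi(G(b))=\int_{\A_{G(b)}}\varphi$, an integral over the $2$-fiber of $F$ (not of $G$) at $G(b)$, while $((F')_!(G')^*\varphi)(b)$ is the integral of $(G')^*\varphi$ over the $2$-fiber $\X_b$ of $F'$ at $b$; the key equivalence is therefore $\X_b\simeq\A_{G(b)}$, compatibly with the projections to $\A$, i.e.\ the $2$-fiber of $F'$ over $b$ agrees with that of the \emph{parallel} leg $F$ over $G(b)$. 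The equivalence you display, ``$\X_a\simeq\B_{F(a)}$'' (fiber of $G'$ over $a$ versus fiber of $G$ over $F(a)$), is the analogous statement for the other pair of parallel legs: it is what you need to show $G'$ is $\pi_0$-finite, but it does not compute either side of the base-change formula. Relatedly, $\pi_0$-finiteness of $G'$ does not follow from $\pi_0$-finiteness of $G$ alone via that equivalence: $\pi_0(\B_{F(a)})$ can be infinite even when $G$ is $\pi_0$-finite (take $\B$ a point and $\C=BG$ with $G$ infinite), and the fiber of $\pi_0(\X)\to\pi_0(\A)$ over $[a]$ is only the quotient set $\coprod_{[b]}G(\Aut(b))\backslash\mathrm{Iso}(F(a),G(b))/F(\Aut(a))$, whose finiteness uses the local finiteness of $F$ (which forces $\Aut(F(a))/F(\Aut(a))$ to be finite). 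These are repairs of bookkeeping rather than of the underlying idea, but as stated the central equivalence in your part (2) is attached to the wrong leg of the square.
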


We define a subcategory of $\Spanf(\Grpd) \subset \Span(\Grpd)$ with the same objects but
morphisms given by spans
\[\xymatrix{ & \ar[dl]_L \X \ar[dr]^R &\\ \A & & \B.}
\]
such that $R$ locally finite and $L$ $\pi_0$-finite. The composition of such spans is well-defined by 
Proposition \ref{prop:basefun}. 

\begin{prop}
	Let $\C$ be a finitary proto-abelian category. 
	\begin{enumerate}
		\item The abstract Hall algebra $(\S_1,\mu,e)$ defines an algebra object in the
			monoidal category $\Spanf(\Grpd) \subset \Span(\Grpd)$.
		\item The association 
			\begin{align*}
				\A & \mapsto \quad \F(\A)\\
				\vcenter{\xymatrix{ & \ar[dl]_L \X \ar[dr]^R &\\ \A & & \B}} & \mapsto \quad \F(\A) \overset{R_! \circ
				L^*}{\lra} \F(\B)
			\end{align*}
			defines a monoidal functor
			\[
				\F: \Spanf(\Grpd) \to \Vect_{\QQ}.
			\]
			The resulting algebra object $\F(\S_1,\mu,e)$ in $\Vect_{\QQ}$ is isomorphic to the (opposite) Hall algebra of $\C$.
	\end{enumerate}
\end{prop}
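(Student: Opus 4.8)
The plan is to verify two things: first, that the abstract Hall algebra $(\S_1, \mu, e)$ lands inside the subcategory $\Spanf(\Grpd)$, and second, that applying the functor $\F$ to it recovers the opposite Hall algebra. For the first part, I must check that the multiplication span $\mu$, given by $\S_{\{0,1\}} \times \S_{\{1,2\}} \overset{G}{\leftarrow} \S_{\{0,1,2\}} \overset{F}{\rightarrow} \S_{\{0,2\}}$, has its right leg $F$ locally finite and its left leg $G$ being $\pi_0$-finite. Here the finitary hypothesis on $\C$ is exactly what is needed: $\pi_0$-finiteness of $G$ amounts to the statement that a fixed object $A_{0,2}$ admits only finitely many isomorphism classes of subobject-filtrations, which follows from finiteness of $\Hom$ and $\Ext$ sets, and local finiteness of $F$ is the corresponding automorphism-count finiteness. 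The unit span $e$ is trivially in $\Spanf(\Grpd)$.

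\textbf{The computation on objects.}
For the second part, I first compute $\F(\S_1)$ on objects. Since $\S_1$ is the maximal groupoid of objects of $\C$, its isomorphism classes are $\iso(\C)$, so $\F(\S_1)$ has a basis indexed by $[M] \in \iso(\C)$; I would identify the basis vector dual to $[M]$ appropriately (using the delta-type functions supported on one isomorphism class) so that the identification $\F(\S_1) \cong \Hall(\C)$ as $\QQ$-vector spaces is transparent after tensoring with $\QQ$. The key is then to evaluate the linear map $F_! \circ G^*$ attached to the span $\mu$ on a basis element. Given the indicator functions $\one_{[N]}$ and $\one_{[L]}$ on the two factors, pulling back along $G$ and pushing forward along $F$ produces, by the definition of the groupoid integral and the pushforward along fibers, a function on $\S_{\{0,2\}}$ whose value at $[M]$ is a weighted count of flags $0 \subset A \subset M$ with $A \cong L$ and $M/A \cong N$.

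\textbf{Reconciling the structure constants.}
The heart of the matter is that the groupoid-cardinality weighting must reproduce the integer $g^M_{N,L}$ counting subobjects. The $2$-fiber of $F$ over a fixed object of $\S_{\{0,2\}}$ of type $M$ is the groupoid of filtrations of $M$ with prescribed subquotient and quotient types; its groupoid cardinality, using Proposition~\ref{prop:basefun} and the action-groupoid cardinality formula $|K//G| = |K|/|G|$, converts the automorphism factors into precisely the honest subobject count $g^M_{N,L}$, because the automorphisms of the ambient object divide out correctly against the groupoid of isomorphisms of the flag. This is where I expect the main obstacle: carefully bookkeeping the automorphism groups of $M$, of $N$, of $L$, and of the flag itself, and checking that all but the subobject-counting factor cancel. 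The appearance of the \emph{opposite} algebra is explained by the ordering convention in the span $\mu$: the left leg records the pair $(\text{quotient}, \text{sub}) = (N, L)$ while the Hall product $[N]\cdot[L]$ sums over subobjects $A \cong L$ with $M/A \cong N$, so the roles of the two factors are swapped relative to the given multiplication formula, yielding the opposite algebra. Finally, unitality transports the span $e$ to the unit element $[0] \in \Hall(\C)$, and monoidality of $\F$ (which follows from the base-change and functoriality statements of Proposition~\ref{prop:basefun}) guarantees that the algebra structure is preserved, completing the identification.
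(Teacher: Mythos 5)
The paper states this proposition without proof, so there is no argument of the author's to compare against; your overall architecture (check the finiteness of the two legs of $\mu$, then compute $F_!\circ G^*$ on indicator functions and match the fibre cardinality with $g^M_{N,L}$) is certainly the intended one. But there are concrete problems. First, you have attached the finiteness conditions to the wrong legs: $\pi_0$-finiteness of $G\colon \S_2 \to \S_1\times\S_1$ concerns the fibres over a fixed pair $([N],[L])$ of sub/quotient types, i.e.\ finiteness of the set of isomorphism classes of short exact sequences with those prescribed ends, which is exactly where finiteness of $\Ext_{\C}(L,N)$ enters; the condition you describe --- fixing the total object $A_{0,2}$ and counting filtrations of it --- is the local finiteness of $F$, and it follows from finiteness of $\Hom$ (monomorphisms $L\to M$ form a subset of the finite set $\Hom_{\C}(L,M)$). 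Second, your explanation of the word ``opposite'' is internally inconsistent: if the left leg really recorded $(\text{quotient},\text{sub})$ as you claim, then $\F(\mu)(\one_{[N]}\otimes\one_{[L]})$ would count subobjects $\cong L$ with quotient $\cong N$, i.e.\ exactly $g^M_{N,L}$, and you would obtain the Hall algebra itself rather than its opposite. In the diagram \eqref{eq:sdiag} the factor $\S_{\{0,1\}}$ is the subobject $A_{0,1}$ and $\S_{\{1,2\}}$ is the quotient $A_{1,2}$, so the first tensor factor of the span multiplication plays the role of the \emph{second} argument of the Hall product; that swap is what produces the opposite algebra.

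Third, and most seriously, the central computation is announced but not carried out: you write that the ``automorphisms of the ambient object divide out correctly'' and that you ``expect the main obstacle'' there, but this is precisely the step that constitutes the proof. The clean statement is that the $2$-fibre of $F$ over $M$, restricted to the component with subobject of type $[N]$ and quotient of type $[L]$, has as its set of isomorphism classes exactly the set of subobjects $A\subset M$ with $A\cong N$ and $M/A\cong L$, and that every object of this fibre has \emph{trivial} automorphism group: an automorphism of the flag inducing the identity on $M$ restricts to an automorphism of $A_{0,1}$ commuting with a monomorphism into $M$ (hence the identity) and to an automorphism of $A_{1,2}$ commuting with an epimorphism from $M$ (hence the identity). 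So the groupoid cardinality is an honest integer count of subobjects, with no division by automorphism groups and no cancellation to arrange. Until this is written down, the identification of $\F(\S_1,\mu,e)$ with the opposite Hall algebra is asserted rather than proved.
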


\subsection{Green's theorem}

Green's theorem (cf. \cite{green}) states that, under certain assumptions on an abelian category $\C$, we can introduce
a {\em coproduct} on the Hall algebra $\Hall(\C)$ making it a bialgebra up to a certain twist.
In this section, we will use the abstract Hall algebra introduced in Section \ref{subsec:spans} to provide a
proof of this statement. 

Let $\C$ be a proto-abelian category and $\S_{\bullet}$ the corresponding groupoids of flags.
Instead of considering the span 
\begin{equation}\label{eq:mult}
		\mu: \vcenter{\xymatrix{ & \ar[dl]_G \S_2 \ar[dr]^F &\\ \S_1 \times
		\S_1 & & \S_1}}
\end{equation}
which represents the multiplication on the abstract Hall algebra, we may form the {\em reverse} span
\begin{equation}\label{eq:comult}
	\Delta: \vcenter{\xymatrix{ & \ar[dl]_F \S_2 \ar[dr]^{G} &\\ \S_1  & & \S_1 \times \S_1}}
\end{equation}
Also taking into account the reverse $c$ of the unit morphism $e$, it is immediate that $(\S_1,
\Delta, c)$ forms a {\em coalgebra object} in $\Span(\Grpd)$. The
proof consists of reading all span diagrams involved in the proof of Theorem \ref{theo:abshall} in
reverse direction.
	
Given an associative $k$-algebra $A$, equipped with a coproduct $\Delta: A \to A
\otimes A$, we may ask if multiplication and comultiplication are compatible in the sense
\begin{equation}\label{eq:compat}
		\Delta(a b) = \Delta(a) \Delta(b).
\end{equation}
In other words, introducing on $A \otimes A$ the algebra structure $(a \otimes b) (a' \otimes b) = a
a' \otimes b b'$, we ask if the coproduct $\Delta$ is a homomorphism of algebras.

\begin{exa} Let $G$ be a group and let 
	\[
		k[G] = \bigoplus_{g \in G} k g
	\]
	denote the group algebra over the field $k$.
	The $k$-linear extension of the formula $\Delta(g) = g \otimes g$ defines a coproduct
	\[
		\Delta: k[G] \to k[G] \otimes k[G] 
	\]
	on $k[G]$. It is immediate to verify \eqref{eq:compat}.
\end{exa}

We address the analogous compatibility question for the abstract Hall algebra. 

\subsubsection{Squares, frames, and crosses}

To analyze whether or not the equation \eqref{eq:compat} holds for the abstract Hall algebra, we
explicitly compute both sides using \eqref{eq:mult} and \eqref{eq:comult}. 
The left-hand side is given by the composite
\[
	\xymatrix{ \cross \ar[r]\ar[d] & \S_2 \ar[r]^G\ar[d]_F & \S_1 \times \S_1\\
		\S_2 \ar[r]^F\ar[d]_G & \S_1 & \\
		\S_1 \times \S_1. &&}
\]
which yields
\begin{equation}\label{eq:lhs}
		\xymatrix{ & \ar[dl]_L \cross \ar[dr]^R &\\ \S_1 \times
		\S_1 & & \S_1 \times \S_1}
\end{equation}
where $\cross$ denotes the groupoid of {\em exact crosses} in $\C$: diagrams
\[
		\xymatrix{ & B \ar[d] & \\
			A' \ar[r] & B' \ar[d]\ar[r] & C'\\
		& B'' & }
\]
consisting of two exact sequences $\C$ with common middle term. The functor $L$ associates to such a
cross the pair of objects $(B, B'')$, the functor $R$ assigns the pair $(A', C')$. Note that we can
compute the $2$-pullback $\cross$ as an ordinary pullback since the functor $F$ is an isofibration.
The right-hand side of \eqref{eq:compat} is given by 
\[
	\xymatrix{ \frame \ar[r]\ar[d] & \S_2 \times \S_2 \ar[r]^{F \times F} \ar[d]_{P} & \S_1 \times \S_1\\
		\S_2 \times \S_2 \ar[r]^-{G \times G}\ar[d]_{F \times F} & \S_1 \times \S_1 \times \S_1 \times \S_1 & \\
		\S_1 \times \S_1 & & }
\]
where the functor $P$ assigns to a pair $(A \to A' \to A'', C \to C' \to C'')$ of short exact
sequences the $4$-tupel $(A,C,A'',C'')$ of objects in $\S_1$. The composite is represented by the span
\begin{equation}\label{eq:rhs}
		\xymatrix{ & \ar[dl]_M \frame \ar[dr]^N &\\ \S_1 \times
		\S_1 & & \S_1 \times \S_1}
\end{equation}
where $\frame$ denotes the groupoid of {\em exact frames} in $\C$: diagrams
\[
	\xymatrix{A \ar[r]\ar[d] & B \ar[r] & C\ar[d] \\
		A' \ar[d] &  & C'\ar[d] \\
	A'' \ar[r] & B'' \ar[r] & C''}
\]
where the two complete rows and columns are short exact sequences. The functor $M$ assigns to such a
frame the pair $(B,B'')$ while the functor $N$ assigns the pair $(A',C')$. To compare the groupoids $\cross$
and $\frame$, we introduce another groupoid: the groupoid $\square$ of {\em exact $3$-by-$3$ squares} given by
commutative diagrams in $\C$ of the form
\[
	\xymatrix{A \ar[r]\ar[d] & B \ar[r] \ar[d] & C\ar[d] \\
		A' \ar[r]\ar[d] & B' \ar[d]\ar[r] & C'\ar[d]\\
	A'' \ar[r] & B''\ar[r] & C'' }
\]
where all rows and columns are required to be short exact sequences. We obtain a commutative diagram
\[
	\xymatrix{
		& \ar[dl]\cross \ar[dr] & \\
		\S_1 \times \S_1 &\ar[l]\ar[u] \square\ar[d] \ar[r]& \S_1 \times \S_1\\
	& \ar[ul]\ar[ur] \frame & }
\]
of groupoids. In what follows, we will assume that the category $\C$ is {\em abelian}. 

\begin{lem} \label{lem:eq1} The forgetful functor $F: \square \to \cross$ is an equivalence of groupoids.
\end{lem}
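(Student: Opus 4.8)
The plan is to exhibit an explicit quasi-inverse to the forgetful functor $F:\square \to \cross$, using the fact that $\C$ is abelian so that both kernels and cokernels exist and behave well. Recall that an object of $\cross$ is an exact cross: two short exact sequences $A' \hookrightarrow B' \twoheadrightarrow C'$ (horizontal) and $B \hookrightarrow B' \twoheadrightarrow B''$ (vertical) sharing the middle term $B'$. An object of $\square$ is a full $3$-by-$3$ diagram with all rows and columns short exact. The functor $F$ forgets the four corner objects $A,C,A'',C''$ of the square, remembering only the central cross. My task is to reconstruct those corners canonically from the cross data.

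\textbf{Reconstructing the corners.} Given an exact cross, I would define the four corner objects as pullbacks and pushouts in the abelian category $\C$. Specifically, set $A = \ker(B \to B' \to C')$, equivalently the pullback $B \times_{B'} A'$; set $C = \coker(A' \to B' \to B'')$-style data, more precisely $C = \operatorname{im}$ of the composite $B \to B' \to C'$, and symmetrically for $A''$ and $C''$ using the cokernel $B''$ together with the maps to $C'$. The key point is that in an abelian category each such corner, together with its structure maps, is determined up to unique isomorphism by a universal property (kernel/cokernel or pullback/pushout), so this assignment is functorial and furnishes a genuine functor $H:\cross \to \square$. I would then verify that the rows and columns of the resulting $3$-by-$3$ diagram are exact: this is a standard diagram chase (or an application of the snake/nine lemma in the abelian setting), where exactness of the outer rows and columns follows formally from exactness of the central cross together with the universal properties defining the corners.

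\textbf{Checking $F$ and $H$ are mutually inverse.} The composite $F \circ H$ is canonically isomorphic to $\id_{\cross}$ essentially by construction, since $H$ does not alter the central cross. For the other composite $H \circ F \cong \id_{\square}$, I would argue that in any exact $3$-by-$3$ square the four corners \emph{already} satisfy the universal properties used to define $H$: for instance, in such a square the top-left corner $A$ is canonically the kernel of $B \to B'$ pulled back appropriately, so the comparison map from the reconstructed corner is forced to be an isomorphism by uniqueness of kernels and cokernels. Assembling these four corner isomorphisms into a natural isomorphism of diagrams completes the argument.

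\textbf{The main obstacle} I expect is not the formal bookkeeping but pinning down the correct universal characterization of each corner so that the reconstruction is manifestly functorial and the recovered diagram is exact. It is tempting to define the corners by ad hoc formulas, but to get a clean quasi-inverse one wants each corner expressed as a limit or colimit that the nine-lemma guarantees is exact; choosing pullbacks for the ``incoming'' corners $A,A''$ and pushouts (cokernels) for the ``outgoing'' corners $C,C''$, and then checking compatibility on the remaining two corners, is where care is needed. I emphasize that abelianness is genuinely used here: in a merely proto-abelian category one lacks the canonical kernels and cokernels needed to reconstruct the corners uniquely, which is precisely why the hypothesis that $\C$ is abelian is imposed just before the statement.
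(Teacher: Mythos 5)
Your proposal is correct and follows essentially the same route as the paper: the paper's proof also reconstructs the missing corners by forming the top-left corner as a pullback, the bottom-right as a pushout, the remaining two via kernels/cokernels, and then invokes the snake lemma to check exactness of the resulting $3$-by-$3$ square, with full faithfulness following from the universal properties of these corners. Your packaging of this as an explicit quasi-inverse rather than as full faithfulness plus essential surjectivity is an immaterial difference.
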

\begin{proof} Fully faithfulness is clear since the missing objects needed to complete a cross to
	square are characterized by universal properties.
	To show essential surjectivity, one shows that one may always complete a cross to a square by
	first forming the top-left corner as a pullback, then the bottom-right corner as a pushout.
	The remaining objects are obtained by forming kernels/cokernels. A few applications of the
	snake lemma are needed to verify that the resulting $3$-by-$3$ square is exact.
\end{proof}

Using Lemma \ref{lem:eq1}, we may replace \eqref{eq:lhs} by the span
\[
		\xymatrix{ & \ar[dl] \square \ar[dr] &\\ \S_1 \times
		\S_1 & & \S_1 \times \S_1}
\]
which represents the same morphism in $\Span(\Grpd)$.
We obtain a diagram 
\[
	\xymatrix{
		& \ar[dl]\square \ar[dd]\ar[dr] & \\
		\S_1 \times \S_1 && \S_1 \times \S_1\\
	& \ar[ul]\ar[ur] \frame & }
\]
of groupoids. If the forgetful functor $\pi: \square \to \frame$ were an equivalence, this would imply the
compatibility of the multiplication and comultiplication for the abstract Hall algebra. As it turns
out, this is {\em not} the case -- the situation is more subtle. We analyze the discrepancy of
the functor $\pi$ from being an equivalence by calculating its $2$-fibers. Note that, $\pi$ being an
isofibration, we may calculate the $2$-fibers as ordinary fibers of $\pi$. Suppose the frame $f$
is given by the diagram
\[
	\xymatrix{A \ar[r]\ar[d] & B \ar[r] & C\ar[d] \\
		A' \ar[d] &  & C'\ar[d] \\
		A'' \ar[r] & B'' \ar[r] & C''.}
\]
The fiber $\square_f$ is the groupoid of diagrams of the form
\[
	\xymatrix{A \ar[r]\ar[d] & B\ar[d] \ar[r] & C\ar[d] \\
		A' \ar[d]\ar[r] & Y \ar[r]\ar[d]& C'\ar[d] \\
		A'' \ar[r] & B'' \ar[r] & C''.}
\]
with morphisms inducing the identity on the fixed outer frame $f$. Note that associated to the frame
$f$, there is a long exact sequence
\begin{equation}\label{eq:yonedasum}
		\xi_f: 0 \lra A \lra A' \amalg_{A} B \lra B'' \times_{C''} C' \lra C'' \lra 0
\end{equation}
given as the Baer sum of the two outer long exact sequences of the frame. 

\begin{defi}\label{defi:triv} Let 
\[
	\xi: 0 \lra Q \lra R  \lra S \lra T \lra 0
\]
be an exact sequence in an abelian category $\A$. We introduce a corresponding groupoid $\Triv(\xi)$
of commutative diagrams of the form
\begin{equation}\label{eq:triv}
		\xymatrix{ Q \ar[r]  & R \ar[rr]\ar@{^{(}->}[dr] &  & S \ar[r] & T \\
		& & Y \ar@{->>}[ur]& & }
\end{equation}
such that the natural maps $Y/Q \to S$ and $Y/R \to T$ are isomorphisms. The morphisms in
$\Triv(\xi)$ are given by isomorphisms of diagrams which induce the identity on $\xi$.
\end{defi}

\begin{lem}\label{lem:2fib} There is an equivalence of groupoids
	\[
		\square_f \lra \Triv(\xi_f).
	\]
\end{lem}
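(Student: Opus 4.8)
The plan is to define a comparison functor $\Phi\colon \square_f \to \Triv(\xi_f)$ using the universal properties of the pushout $R = A'\amalg_A B$ and the pullback $S = B''\times_{C''} C'$, and to prove it is an equivalence by checking full faithfulness and essential surjectivity. Given an object of $\square_f$ with middle term $Y$, the maps $A'\to Y$ and $B\to Y$ coincide on $A$ because the top-left square commutes, so they induce a single map $R\to Y$; dually the maps $Y\to B''$ and $Y\to C'$ agree over $C''$ and induce $Y\to S$. Together these produce a diagram of the shape \eqref{eq:triv} associated to $\xi_f$, and this assignment, sending an isomorphism of fillings $Y\to Y'$ to itself, defines $\Phi$.

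First I would check that $\Phi$ actually lands in $\Triv(\xi_f)$, that is, that $R\to Y$ is a monomorphism with $Y/R\cong T$ and $Y\to S$ is an epimorphism with $Y/Q\cong S$. Both follow from the snake lemma. Comparing the exact sequence $0\to A'\to R\to C\to 0$ (the cokernel of $A'\to R$ being the cokernel $C$ of $A\to B$) with the exact middle row $0\to A'\to Y\to C'\to 0$ along the right-column monomorphism $C\to C'$ identifies $\mathrm{coker}(R\to Y)$ with $C''=T$ and shows $R\to Y$ is mono; comparing the middle row with $0\to A''\to S\to C'\to 0$ along the left-column epimorphism $A'\to A''$ identifies $\ker(Y\to S)$ with $A=Q$ and shows $Y\to S$ is epi. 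For full faithfulness I would observe that every inner structure map of the $3$-by-$3$ square factors through $R\to Y$ or through $Y\to S$ (the maps out of $A'$ and $B$ factor through $R$, the maps into $B''$ and $C'$ through the projections of $S$); hence an isomorphism $Y\to Y'$ is compatible with the whole square if and only if it is compatible with $R\to Y$ and $Y\to S$, which is exactly the datum of a morphism in $\Triv(\xi_f)$.

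The essential surjectivity is where the real work lies. Starting from a $\Triv(\xi_f)$-diagram $R\to Y\to S$, I would reconstruct a candidate square by composing with the structure maps $A'\to R$, $B\to R$ and with the projections $S\to B''$, $S\to C'$, thereby obtaining all four inner arrows of the $3$-by-$3$ diagram; restricting to the outer frame recovers $f$ by construction. The main obstacle is to verify that the middle row $A'\to Y\to C'$ and the middle column $B\to Y\to B''$ are short exact. This amounts to running the two snake-lemma computations above in reverse, deducing exactness of the reconstructed row and column from the defining isomorphisms $Y/R\cong T$ and $Y/Q\cong S$ together with the exactness of the frame $f$. The subtle point is the identification of the induced maps on subquotients (for instance the map $R/A'\to Y/A'\to C'$) with the prescribed frame maps; this compatibility is precisely what is encoded by $\xi_f$ being the Baer sum \eqref{eq:yonedasum} of the two outer extensions, so I would extract it from that description. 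Once these exactness statements are established, the reconstructed square is an object of $\square_f$ mapping under $\Phi$ to the given $\Triv(\xi_f)$-diagram, which completes the proof that $\Phi$ is an equivalence.
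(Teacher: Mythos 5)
Your proposal is correct and takes essentially the same route as the paper, whose proof simply observes that an exact $3$-by-$3$ square canonically yields a diagram of the form \eqref{eq:triv} and that several applications of the snake lemma show the resulting functor is a well-defined equivalence. Your write-up supplies the details the paper leaves implicit: the construction of the comparison functor via the universal properties of $A'\amalg_A B$ and $B''\times_{C''}C'$, the snake-lemma verifications, and the factorization argument for full faithfulness.
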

\begin{proof} It is clear that, given an exact $3$-by-$3$ square, we obtain a canonical diagram of
	the form \eqref{eq:triv}. Various applications of the snake lemma show that this 
	association yields a well-defined functor which is an equivalence.
\end{proof}

It turns out that the groupoid $\Triv(\xi)$ has a beautiful interpretation in the context of
Yoneda's theory of extensions. We review some aspects of this theory.

\subsubsection{Yoneda's theory of extensions}
\label{sec:yoneda}

Let $\C$ be an abelian category and let $A,B$ be objects in $\C$. An $n$-extension of $B$ by $A$ is
an exact sequence
\[
	\xi: 0 \lra A \lra X_{n-1} \lra X_{n-2} \lra \dots \lra X_0 \lra B \lra 0
\]
in $\C$. Given another extension 
\[
	\xi': 0 \lra A \lra X_{n-1}' \lra X_{n-2}' \lra \dots \lra X_0' \lra B \lra 0
\]
we say that $\xi$ and $\xi'$ are {\em Yoneda equivalent} if there exists a commutative diagram
\begin{equation}\label{eq:yonedaequiv}
		\xymatrix{& & X_{n-1} \ar[r] & X_{n-2} \ar[r] & \dots \ar[r] & X_0 \ar[dr] & & \\
		0 \ar[r] & A \ar[r] \ar[ur] \ar[dr] & Y_{n-1} \ar[r]\ar[u] \ar[d] & Y_{n-2} \ar[u] \ar[d]\ar[r] & \dots
		\ar[r] & Y_0\ar[u] \ar[d] \ar[r] & B \ar[r] & 0\\
		& & X_{n-1}' \ar[r] & X_{n-2}' \ar[r] & \dots \ar[r] & X_0' \ar[ur] & &}
	\end{equation}
with exact rows. Yoneda equivalence in fact defines an equivalence relation and 
we denote by $\Ext^n(B,A)$ the set of equivalence classes of $n$-extensions of $B$ by $A$. The
association 
\[
	(B,A) \mapsto \Ext^n(B,A)
\]
is functorial in both arguments: Given a morphism $f: A \to A'$, and an extension $\xi$ of $B$ by $A$ as
above, we obtain an $n$-extension
\[
	f_*(\xi): 0 \lra A' \lra X_{n-1} \amalg_{A} A' \lra X_{n-2} \lra \dots \lra X_0 \lra B \lra 0
\]
of $B$ by $A'$ called the {\em Yoneda pushout} of $\xi$ along $f$.
Dually, given a morphism $g: B' \to B$, we obtain an $n$-extension
\[
	g^*(\xi): 0 \lra A \lra X_{n-1} \lra X_{n-2} \lra \dots \lra X_0 \times_{B} B' \lra B' \lra 0
\]
of $B'$ by $A$ called the {\em Yoneda pullback} of $\xi$ along $g$.
Further, the set $\Ext^n(B,A)$ is equipped with an addition law: Given two extensions
$\xi$ and $\xi'$ as above, we first define
\[
	\xi \oplus \xi': 0 \lra A \oplus A \lra X_{n-1} \oplus X_{n-1}'  \lra \dots \lra
	\lra X_0 \oplus X_0' \lra B \oplus B \lra 0
\]
and then the {\em Baer sum}
\[
	\xi + \xi' = (\Delta_B)^*((\nabla_A)_*(\xi \oplus \xi')) 
\]
where $\Delta_B: B \to B \oplus B$ and $\nabla_A: A \oplus A \to A$ denote diagonal and codiagonal,
respectively. The Baer sum defines an abelian group structure on the set $\Ext^n(B,A)$.  

\begin{exa} In the case $n=2$, we obtain 
	\[
		\xi + \xi':  0 \lra A \lra X_{1} \amalg_A X_{1}'  \lra X_0 \times_B X_0' \lra B \lra
		0
	\]
	which is the operation used to produce the exact sequence \eqref{eq:yonedasum}.
\end{exa}

Due to the complicated nature of the equivalence relation \eqref{eq:yonedaequiv}, it is hard to
decide whether two given extensions $\xi$ and $\xi'$ are equivalent, let alone to compute the group
$\Ext^n(B,A)$. The situation simplifies greatly if the abelian category $\A$ has enough projectives
which we assume from now on (what follows can alternatively be done via dual arguments assuming that $\A$ has enough
injectives).

Given an extension 
\[
	\xi: 0 \lra A \lra X_{n-1} \lra X_{n-2} \lra \dots \lra X_0 \lra B \lra 0
\]
we choose a projective resolution
\[
	\dots \lra P_1 \lra P_0 \lra B
\]
of $B$. Using the projectivity of the objects $P_i$ we can construct a lift of the identity 
map $B \to B$ to a morphism of complexes
\begin{equation}\label{eq:lift}
		\xymatrix{\dots \ar[r] & P_{n+1} \ar[r]^d\ar[d] & P_n \ar[r]^d\ar[d]^{f_n} & \dots \ar[r] &
		P_1 \ar[r]^d\ar[d]^{f_1} & P_0 \ar[r]\ar[d]^{f_0} & B \ar[d]^{\id} \\
	\dots \ar[r] &	0 \ar[r] & A \ar[r] & \dots \ar[r] & X_1 \ar[r] & X_0 \ar[r] & B }
\end{equation}
so that we obtain an element $f_n \in \Hom(P_n, A)$ satisfying $f_n \circ d = 0$. This element
therefore defines a $n$-cocycle in the complex $\Hom(P_{\bullet}, A)$. We have: 

\begin{lem} The association $\xi \mapsto f_n$ defines an isomorphism of abelian groups
	\[
		\Ext^n(B,A) \overset{\cong}{\lra} H^n(\Hom(P_{\bullet}, A)).
	\]
\end{lem}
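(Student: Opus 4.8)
The plan is to produce a two-sided inverse via the syzygies of $P_\bullet$, after checking that $\Phi\colon \xi \mapsto [f_n]$ is a well-defined homomorphism. Well-definedness splits into two points. The diagram \eqref{eq:lift} is just a comparison lift of $\id_B$ from the projective complex $P_\bullet$ to the acyclic complex $C(\xi) = [A \lra X_{n-1} \lra \cdots \lra X_0]$, which is exact since $\xi$ is an $n$-extension; the comparison theorem guarantees such a lift exists and is unique up to chain homotopy. If $f_\bullet, f'_\bullet$ are two lifts and $h$ a homotopy, then in degree $n$ the target complex vanishes above $A$, so $f_n - f'_n = h_{n-1}\circ d$ is a coboundary and $[f_n]$ is independent of the lift. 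For invariance under a Yoneda equivalence \eqref{eq:yonedaequiv}, I would lift $\id_B$ to a single chain map into $[A \lra Y_{n-1}\lra\cdots\lra Y_0]$ and postcompose with the two comparison maps to $X_\bullet$ and $X_\bullet'$; since these are the identity on $A$, the two resulting top cocycles coincide, giving $\Phi(\xi)=\Phi(\xi')$.

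Next I would prove additivity by recording the behaviour of $\Phi$ under the two functorialities. For $f\colon A \to A'$, a lift of the Yoneda pushout $f_*\xi$ is obtained from a lift of $\xi$ by postcomposing with $f$ in top degree, so $\Phi(f_*\xi)=\Hom(P_n,f)(f_n)$. For $g\colon B'\to B$ with resolution $Q_\bullet$, lifting $g$ to a chain map $\tilde g\colon Q_\bullet \to P_\bullet$ and using the map $C(g^*\xi)\to C(\xi)$, which is $\id_A$ on top, identifies the cocycle of $g^*\xi$ with $f_n\circ \tilde g_n$. Applying these to $\xi + \xi' = \Delta_B^*\,(\nabla_A)_*(\xi\oplus\xi')$, with the diagonal chain map lifting $\Delta_B$ and the codiagonal acting in top degree, the two contributions combine to $\nabla_A\circ(f_n\oplus f_n')\circ\Delta = f_n + f_n'$, so $\Phi$ is a group homomorphism.

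Finally, bijectivity, which I expect to be the crux. Let $K = \im(d\colon P_n \to P_{n-1})$ be the $(n-1)$st syzygy; truncating the resolution gives a canonical $n$-extension $\rho\colon 0 \lra K \lra P_{n-1}\lra\cdots\lra P_0 \lra B \lra 0$. Since the surjection $P_n \twoheadrightarrow K$ kills $\im(d\colon P_{n+1}\to P_n)$, every cocycle $g\in\Hom(P_n,A)$ factors uniquely through a map $\bar g\colon K \to A$, and I would set $\Psi(g)=\bar g_*\rho$. This descends to cohomology because a coboundary $g=s\circ d$ has $\bar g = s|_K$ extending over $P_{n-1}$, so the pushout $\bar g_*\rho$ acquires a retraction and is Yoneda-trivial. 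The relation $\Phi\circ\Psi=\id$ is immediate: the tautological lift of $\rho$ has top component $P_n \twoheadrightarrow K$, and pushing out along $\bar g$ turns this into $g$. The main obstacle is the reverse identity $\Psi\circ\Phi=\id$: given $\xi$ with factored cocycle $\bar f\colon K \to A$, I must exhibit a Yoneda equivalence $\xi \simeq \bar f_*\rho$. Here I would feed the comparison lift $f_\bullet$, which maps $\rho$ to $\xi$ over $\id_B$ and is the identity on $A$, into the pushout square defining $\bar f_*\rho$; the delicate part is checking — via the universal property of the pushout and a few applications of the snake lemma — that $f_\bullet$ assembles into the connecting two-row diagram \eqref{eq:yonedaequiv} between $\bar f_*\rho$ and $\xi$. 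Everything else reduces to the comparison theorem for projective resolutions.
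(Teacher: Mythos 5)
The paper states this lemma without proof (it is quoted as a standard fact from Yoneda's theory), so there is no argument of the paper to compare against; your proposal is the classical comparison between Yoneda $\Ext$ and derived-functor $\Ext$, and its architecture is sound. Well-definedness and additivity of $\Phi:\xi\mapsto[f_n]$ via the comparison theorem are correct, as are the functoriality computations $\Phi(f_*\xi)=f\circ f_n$ and $\Phi(g^*\xi)=f_n\circ\tilde g_n$ and the resulting additivity under the Baer sum, and $\Phi\circ\Psi=\id$ is indeed immediate. One remark on emphasis: the step you single out as the main obstacle, $\Psi\circ\Phi=\id$, is actually the easy one. Since $f_n\circ d=0$, the maps $(\bar f, f_{n-1},\dots,f_0,\id_B)$ form a morphism of extensions $\rho\to\xi$ over $(\bar f,\id_B)$, and the universal property of the pushout factors it through a morphism $\bar f_*\rho\to\xi$ which is the identity on $A$ and on $B$; taking $Y=\bar f_*\rho$ in \eqref{eq:yonedaequiv}, with the identity upward and this morphism downward, gives the required equivalence. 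No snake lemma is needed beyond exactness of the pushed-out row.

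The one genuine gloss is the well-definedness of $\Psi$ on cohomology classes. You argue that for a coboundary $g=s\circ d$ the extension $\bar g_*\rho$ ``acquires a retraction and is Yoneda-trivial.'' For $n\ge 2$ a left-split $n$-extension is indeed trivial, but this is not immediate from the definition of the neutral element of the Baer sum; and, more seriously, passing from ``coboundaries go to the trivial class'' to ``cohomologous cocycles have congruent images'' uses additivity of $\Psi$, which you have not established at that point. Both issues are avoided by a direct congruence: if $\bar g-\bar g'=s|_K$ for some $s\in\Hom(P_{n-1},A)$, then $(p,a)\mapsto(p,a+s(p))$ is a well-defined isomorphism between the two pushouts $P_{n-1}\amalg_K A$ (formed along $\bar g$ and along $\bar g'$), compatible with the inclusions of $A$ and with the maps to $P_{n-2}$, hence yields a Yoneda equivalence $\bar g_*\rho\simeq\bar g'_*\rho$. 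This also gives injectivity of $\Phi$ directly --- $\Phi(\xi)=\Phi(\xi')$ implies $\xi\simeq\bar f_*\rho\simeq\bar f'_*\rho\simeq\xi'$ --- without ever identifying the zero class or proving $\Psi$ additive. With that substitution your proof is complete.
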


Since $\Ext^n(B,A)$ forms an abelian group, there exists a distinguished equivalence class of
$n$-extensions which are {\em trivial} in the sense that they represent the neutral element. We will
now provide a detailed study of trivial extensions in the cases $n=1$ and $n=2$. 

Let 
\[
	\xi: 0 \lra A \overset{i}{\lra} X_0 \lra B \lra 0
\]
be an extension of $B$ by $A$. A {\em splitting} of $\xi$ is a morphism $s: X_0 \to A$ such that $si =
\id_A$. We denote by $\Split(\xi)$ the set of splittings of $\xi$ which we will now analyze
explicitly: Fix a projective resolution $P_{\bullet}$ of $B$, and a lift of $\id:
B \to B$ as in \eqref{eq:lift}. In particular, we obtain a corresponding cocycle $f_1 \in
\Hom(P_1,A)$. Consider the differential 
\[
	d: \Hom(P_0, A) \to \Hom(P_1, A).
\]

\begin{prop} There is a canonical bijection of sets
	\[
		d^{-1}(f_1) \overset{\cong}{\lra} \Split(\xi).
	\]
	In particular,
	\begin{enumerate}
		\item A splitting exists if and only if the class of $\xi$ in $\Ext^1(B,A)$ is trivial.
		\item If the class of $\xi$ is trivial, then the set of different splittings admits
			a simply transitive action of the abelian group $\Hom(B,A)$.
	\end{enumerate}
\end{prop}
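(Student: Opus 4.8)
The plan is to exhibit an explicit comparison map $\Split(\xi) \to d^{-1}(f_1)$ and prove it is a bijection by organizing both sides as torsors under $\Hom(B,A)$. Write the resolution as $\cdots \to P_1 \xrightarrow{d_1} P_0 \xrightarrow{\epsilon} B \to 0$ and the extension as $0 \to A \xrightarrow{i} X_0 \xrightarrow{p} B \to 0$, so that the chosen lift of $\id_B$ consists of $f_0 \colon P_0 \to X_0$ with $p f_0 = \epsilon$ and $f_1 \colon P_1 \to A$ with $i f_1 = f_0 d_1$. First I would define the candidate map by $s \mapsto s f_0$; it lands in $d^{-1}(f_1)$ because $(s f_0) d_1 = s (f_0 d_1) = s i f_1 = f_1$, using $s i = \id_A$.

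Next I would set up the two torsor structures. The group $\Hom(B,A)$ acts on $\Split(\xi)$ by $s \mapsto s + h p$ (again a splitting, since $(hp)i = 0$), and any two splittings differ by such a term: if $si = s'i$ then $s - s'$ kills $\im(i) = \ker(p)$, hence factors uniquely as $h p$ through $p = \coker(i)$. On the other side, $\Hom(B,A)$ acts on $d^{-1}(f_1)$ by $\phi \mapsto \phi + h \epsilon$, where I identify $\Hom(B,A)$ with $\ker\!\big(d \colon \Hom(P_0,A) \to \Hom(P_1,A)\big)$ via $h \mapsto h \epsilon$ (legitimate because $\epsilon = \coker(d_1)$, so $\psi d_1 = 0$ forces $\psi = h \epsilon$). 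The comparison map is equivariant since $h p f_0 = h \epsilon$. As an equivariant map between torsors under the same group is automatically bijective, it then remains only to show that $\Split(\xi)$ and $d^{-1}(f_1)$ are nonempty simultaneously; the forward implication is immediate from the map just defined.

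The main obstacle is the reverse nonemptiness: producing a genuine retraction from a mere cochain $\phi$ with $\phi d_1 = f_1$. The key step I would use is to compare $f_0$ with $i\phi$. From $\phi d_1 = f_1$ and $i f_1 = f_0 d_1$ one gets $(f_0 - i\phi) d_1 = 0$, so $f_0 - i\phi$ kills $\im(d_1) = \ker(\epsilon)$ and therefore factors as $g \epsilon$ for a unique $g \colon B \to X_0$. Applying $p$ and using $p i = 0$ gives $p g \epsilon = p f_0 = \epsilon$, whence $p g = \id_B$ as $\epsilon$ is epic; thus $g$ is a section of $p$. A section of $p$ yields a retraction of $i$ in the standard way: since $p(\id_{X_0} - g p) = 0$, the map $\id_{X_0} - g p$ factors uniquely as $i s$ for some $s \colon X_0 \to A$, and then $i s i = i - g p i = i$ forces $s i = \id_A$, so $s \in \Split(\xi)$.

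Finally I would read off the two numbered claims. Statement (1) is exactly the assertion that $\Split(\xi)$ is nonempty if and only if $d^{-1}(f_1)$ is, combined with the preceding Lemma identifying $\Ext^1(B,A)$ with $H^1(\Hom(P_\bullet,A))$: the class of $\xi$ is trivial iff the cocycle $f_1$ is a coboundary, i.e.\ iff $f_1$ lies in the image of $d$, i.e.\ iff $d^{-1}(f_1) \neq \emptyset$. Statement (2) is precisely the torsor structure established above: once $\Split(\xi)$ is nonempty it carries a free transitive action of $\Hom(B,A)$, which is the asserted simply transitive action.
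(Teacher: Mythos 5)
Your proof is correct, but it is organized differently from the paper's. The paper's argument pivots on replacing $X_0$ by the pushout $A \amalg_{P_1} P_0$: starting from the lift \eqref{eq:lift} it forms this pushout, invokes the snake lemma to show the induced map $A \amalg_{P_1} P_0 \to X_0$ is an isomorphism, and then writes the bijection down completely explicitly --- a cochain $\varphi$ with $\varphi \circ d = f_1$ gives the splitting $(a,p) \mapsto a + \varphi(p)$, and conversely a splitting restricts along $P_0 \to A \amalg_{P_1} P_0$ to such a $\varphi$. You instead work directly with $X_0$: you define the comparison map $s \mapsto s f_0$, observe that both $\Split(\xi)$ and $d^{-1}(f_1)$ are (when nonempty) torsors under $\Hom(B,A)$ via the cokernel properties of $p$ and $\epsilon$, note the map is equivariant, and reduce bijectivity to simultaneous nonemptiness, which you settle by explicitly manufacturing a section $g$ of $p$ from a cochain $\phi$ and then a retraction from $g$. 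What the paper's route buys is a single concrete formula for the bijection and a cleaner segue into the $n=2$ case (Proposition \ref{prop:triv} reuses exactly the same pushout $Y_t = X_1 \amalg_{P_1} P_0$); what your route buys is that it avoids the pushout identification and the snake lemma entirely, makes the simply transitive $\Hom(B,A)$-action of part (2) the engine of the proof rather than an afterthought, and isolates the only genuinely nontrivial content as the nonemptiness implication. Both correctly deduce (1) from the Lemma identifying $\Ext^1(B,A)$ with $H^1(\Hom(P_\bullet,A))$.
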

\begin{proof}
	The diagram \eqref{eq:lift} induces a commutative diagram
	\[
		\xymatrix{0 \ar[r] & P_1/\im P_2 \ar[r]\ar[d]^{\overline{f_1}} & P_0 \ar[r]\ar[d]^{f_0} & B \ar[d]^{\id}\ar[r] & 0 \\
		0 \ar[r] & A \ar[r] &  X_0 \ar[r] & B \ar[r] & 0}
	\]
	with exact rows. Forming the pushout of the top-left square, we obtain a commutative diagram
	\[
		\xymatrix{0 \ar[r] & A \ar[r]\ar[d]^{\id} & A \amalg_{P_1} P_0 \ar[r] \ar[d]^g & B \ar[d]^{\id}\ar[r] & 0 \\
		0 \ar[r] & A \ar[r] &  X_0 \ar[r] & B \ar[r] & 0}
	\]
	with exact rows so that, by the snake lemma, the morphism $g$ is an isomorphism. Therefore,
	splittings of $\xi$ are canonically identified with splittings of the short exact sequence
	\begin{equation}\label{eq:suffices}
			\xymatrix{0 \ar[r] & A \ar[r] & A \amalg_{P_1} P_0 \ar[r]  & B
		\ar[r] & 0. }
	\end{equation}
	We now provide the claimed bijection. Let $\varphi \in \Hom(P_0,A)$ such that $\varphi \circ
	d = f_1$. Then we obtain a morphism
	\[
		A \amalg_{P_1} P_0 \lra A, (a,p) \mapsto a + \varphi(p)
	\]
	which defines a splitting of \eqref{eq:suffices}. Vice versa, given a splitting $s$, we pull
	back via the canonical morphism $P_0 \to A \amalg_{P_1} P_0$ to obtain a morphism $\varphi: P_0 \to
	A$ which, by the relations defining the pushout, satisfies $\varphi \circ d = f_1$. It is
	immediate to verify that these two assignments define inverse maps.
\end{proof}

We will now describe an analogous point of view on trivial $2$-extensions. Let 
\[
	\xi: 0 \lra A \lra X_1 \lra X_0 \lra B \lra 0
\]
be a $2$-extension of $B$ by $A$. A {\em trivialization} of $\xi$ is a commutative diagram
\begin{equation}\label{eq:trivy}
		\xymatrix{ A \ar[r]  & X_1 \ar[rr]\ar@{^{(}->}[dr] &  & X_0 \ar[r] & B \\
		& & Y \ar@{->>}[ur]& & }
\end{equation}
such that the natural maps $Y/A \to S$ and $Y/X_1 \to B$ are isomorphisms. Note that, in contrast to
the case $n=1$, where the collection of splittings forms a {\em set}, the collection of trivializations
naturally organizes into a {\em groupoid}: the groupoid $\Triv(\xi)$ introduced in Definition
\ref{defi:triv}. We will now argue that the groupoid $\Triv(\xi)$ is the $n=2$ analog of the set
$\Split(\xi)$: Fix a projective resolution $P_{\bullet}$ of $B$, and a lift of $\id:
B \to B$ as in \eqref{eq:lift}. We obtain a corresponding cocycle $f_2 \in
\Hom(P_2,A)$ and consider the complex
\[
	\Hom(P_0, A) \overset{d}{\to} \Hom(P_1, A) \overset{d}{\to} \Hom(P_2, A).
\]

\begin{prop}\label{prop:triv} There is a canonical equivalence of groupoids
	\[
		T: d^{-1}(f_2)//\Hom(P_0,A) \overset{\simeq}{\lra} \Triv(\xi)
	\]
	where the left-hand side denotes the action groupoid corresponding to the action of the
	abelian group $\Hom(P_0,A)$ on $d^{-1}(f_2)$ via $(t,g) \mapsto t + g \circ d$.
	In particular,
	\begin{enumerate}
		\item A trivialization exists, i.e., $\Triv(\xi) \ne \emptyset$, if and only if the class of $\xi$ in $\Ext^2(B,A)$ is
			trivial.
		\item Assume that the class of $\xi$ is trivial. Then
			\begin{enumerate}[label=(\roman{*})]
					\item The set of isomorphism classes $\pi_0(\Triv(\xi))$ is acted
					upon simply transitively by the group $\Ext^1(B,A)$. 
				\item The automorphism group of any object of $\Triv(\xi)$ is isomorphic to the
					abelian group $\Hom(B,A)$.
			\end{enumerate}
	\end{enumerate}
\end{prop}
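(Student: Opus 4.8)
The plan is to construct the functor $T$ by hand and then check full faithfulness and essential surjectivity, in close analogy with the preceding analysis of $\Split(\xi)$ but now recording the groupoid structure. Write $\iota\colon A\to X_1$, $p\colon X_1\to X_0$ for the maps of $\xi$ and recall from the lift \eqref{eq:lift} the morphisms $f_1\colon P_1\to X_1$, $f_0\colon P_0\to X_0$ and the cocycle $f_2\colon P_2\to A$ with $\iota\circ f_2=f_1\circ d$. The key observation is that an element $\varphi\in d^{-1}(f_2)$, i.e.\ $\varphi\colon P_1\to A$ with $\varphi\circ d=f_2$, is exactly what makes the map $f_1-\iota\circ\varphi\colon P_1\to X_1$ annihilate $\im(d\colon P_2\to P_1)=\ker(d\colon P_1\to P_0)$, since $(f_1-\iota\varphi)\circ d=\iota f_2-\iota f_2=0$. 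Thus $f_1-\iota\varphi$ factors through $Z_1:=\im(d\colon P_1\to P_0)=\ker(P_0\to B)$.

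On objects I would set $T(\varphi)=Y_\varphi$ to be the pushout
\[
Y_\varphi=X_1\amalg_{P_1}P_0
\]
of $X_1\xleftarrow{\,f_1-\iota\varphi\,}P_1\xrightarrow{\,d\,}P_0$. By the previous remark this is the Yoneda pushout of the short exact sequence $0\to Z_1\to P_0\to B\to 0$ along the induced map $Z_1\to X_1$, so it carries a canonical short exact sequence $0\to X_1\to Y_\varphi\to B\to 0$; in particular $X_1\to Y_\varphi$ is a monomorphism and $Y_\varphi/X_1\cong B$. Moreover $p$ and $f_0$ agree on $P_1$ (both composites equal $f_0\circ d$), so they induce an epimorphism $Y_\varphi\to X_0$ through which $X_1\to Y_\varphi$ recovers $p$. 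Together with the inclusion this is the candidate trivialization; a short snake-lemma chase shows the induced map $Y_\varphi/A\to X_0$ is an isomorphism, being a morphism of extensions of $B$ by $\im(p)$ inducing the identity on $\im(p)$ and on $B$.

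For morphisms, an element $g\in\Hom(P_0,A)$ — a morphism $\varphi\to\varphi+g\circ d$ in the action groupoid — alters $f_1-\iota\varphi$ by $-\iota g\circ d$, and I would send it to the shear isomorphism $Y_\varphi\to Y_{\varphi+g\circ d}$ which is the identity on $X_1$ and differs from the canonical map $P_0\to Y_{\varphi+g\circ d}$ by $\iota\circ g$. One checks directly that this respects the structure maps to $\xi$ and composition, so $T$ is a functor. Faithfulness and fullness reduce to comparing morphism sets: a morphism $Y_\varphi\to Y_{\varphi'}$ in $\Triv(\xi)$ is an isomorphism $\theta$ which is the identity on the subobject $X_1$ and lies over $X_0$, so $\theta-\id$ factors through $Y_\varphi/X_1\cong B$ and lands in $\ker(Y_\varphi\to X_0)=A$, hence is given by an element of $\Hom(B,A)$. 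This identifies $\Aut_{\Triv(\xi)}(Y_\varphi)$ with $\Hom(B,A)$, matching the stabilizer $\{g\in\Hom(P_0,A)\mid g\circ d=0\}\cong\Hom(B,A)$ in the action groupoid, and an analogous comparison handles the Hom-sets between distinct objects.

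The main obstacle is essential surjectivity. Given an arbitrary trivialization $Y$, the composite $Y\to X_0\to B$ coincides with the quotient $Y\to Y/X_1\cong B$ and is in particular an epimorphism; using the projectivity of $P_0$ I would lift $P_0\to B$ along it to a map $\widetilde f_0\colon P_0\to Y$, and adjust $\widetilde f_0$ by a map $P_0\to X_1\hookrightarrow Y$ (possible since $p$ is epi and $P_0$ projective) so that $P_0\to Y\to X_0$ equals $f_0$. Then $P_1\xrightarrow{d}P_0\xrightarrow{\widetilde f_0}Y$ lands in $\ker(Y\to B)=X_1$, and its difference with $f_1$ is killed by $p$, hence factors through $\ker(p)=A$, yielding $\varphi\colon P_1\to A$; one verifies $\varphi\circ d=f_2$ from $d\circ d=0$ and $f_1\circ d=\iota f_2$, so $\varphi\in d^{-1}(f_2)$. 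Since $f_1-\iota\varphi=\widetilde f_0\circ d$ by construction, the pushout yields a morphism $Y_\varphi\to Y$ which is an isomorphism of trivializations by the five lemma. Organizing these lifts coherently and the attendant snake-lemma verifications is the technical heart. Once $T$ is an equivalence, (1) and (2) follow from the action groupoid: $d^{-1}(f_2)\ne\emptyset$ exactly when $[\xi]=0$ in $\Ext^2(B,A)$, the set $\pi_0$ is the torsor $d^{-1}(f_2)/\im(d)$ under $H^1=\Ext^1(B,A)$, and the stabilizers are $\Hom(B,A)$.
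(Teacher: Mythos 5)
Your construction is exactly the paper's: the same pushout $Y_\varphi = X_1 \amalg_{P_1} P_0$ along $f_1 - \iota\varphi$ on objects, the same shear isomorphism on morphisms, and the consequences (1), (2) read off from the action groupoid. The only difference is that you spell out full faithfulness and essential surjectivity, which the paper declares ``straightforward to verify,'' and your arguments for these are correct.
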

\begin{proof}
	Let $t$ be an object of the action groupoid, i.e., an element $t \in \Hom(P_1, A)$ such that
	$t \circ d = f_2$. From the chosen diagram 
	\[
		\xymatrix{ P_3 \ar[r] \ar[d]& P_2 \ar[r]\ar[d]^{f_2} & P_1 \ar[r]\ar[d]^{f_1} & P_0 \ar[r]\ar[d]^{f_0} & B \ar[d]^{\id} \\
	0 \ar[r] & A \ar[r]^i & X_1 \ar[r] & X_0 \ar[r] & B }
	\]
	we obtain a commutative diagram
	\begin{equation}\label{eq:new}
			\xymatrix{ 0\ar[r] \ar[d] & P_1/\im P_2 \ar[r]\ar[d]^{f_1 - i \circ t} & P_0 \ar[r]\ar[d]^{f_0} & B \ar[d]^{\id}\ar[r] & 0 \\
		A  \ar[r]^i & X_1 \ar[r] &  X_0 \ar[r] & B \ar[r] & 0}
	\end{equation}
	with exact rows. We form the pushout
	\[
		Y_t := X_1 \amalg_{P_1 / \im P_2} P_0 \cong X_1 \amalg_{P_1} P_0
	\] 
	to obtain a commutative diagram 
	\begin{equation}\label{eq:triv2}
			\xymatrix{ & & Y_t \ar[dr] & \\
			A  \ar[r]^i & X_1 \ar[rr]\ar[ur] & &  X_0 \ar[r] & B. }
	\end{equation}
	The sequence
	\[
		0 \lra X_1 \lra Y_t \lra B \lra 0
	\]
	is a Yoneda pushout of the top exact sequence in \eqref{eq:new} and therefore exact. We
	further obtain from \eqref{eq:triv2} a commutative diagram
	\[
		\xymatrix{ 0\ar[r] &  X_1/A \ar[r]\ar[d]^{\id} & Y_t/A \ar[r]\ar[d] &  B \ar[r] \ar[d]^{\id}& 0\\
	0\ar[r] & X_1/A \ar[r] &  X_0 \ar[r] & B \ar[r] & 0}
	\]
	where the top row is exact by the third isomorphism theorem, and the bottom row is trivially
	exact. The snake lemma implies that $Y_t/A \to X_0$ is an isomorphism so that the diagram
	\eqref{eq:triv2} defines an object of $\Triv(\xi)$. This defines the functor $T$ 
	on objects. Given a morphism between objects $t$ and $t'$, i.e., an element $g \in
	\Hom(P_0,A)$ such that $t' = t + g \circ d$, we obtain an induced morphism $Y_t \to Y_t'$
	via the formula
	\[
		X_1 \amalg_{P_1}^t P_0 \lra X_1 \amalg_{P_1}^{t'} P_0,\; (x,p) \mapsto (x + i(g(p)), p)
	\]
	This association defines $T$ on morphisms. Fully faithfulness and essential surjectivity are
	straightforward to verify.
\end{proof}

\subsubsection{Proof of Green's theorem}

In the previous sections we have seen that the compatibility
\begin{equation}\label{eq:compat2}
		\Delta(ab) = \Delta(a) \Delta(b)
\end{equation}
of multiplication and comultiplication fails for the abstract Hall algebra since the forgetful functor
\[
	\pi: \square \to \frame
\]
from exact $3$-by-$3$ squares to exact frames is not an equivalence. However, the language of
groupoids gives us a precise measure for the failure of \eqref{eq:compat2}: the
$2$-fibers of the functor $\pi$. As we have seen in Lemma \ref{lem:2fib}, the $2$-fiber $\square_f$
over a fixed frame $f$ is given by the groupoid $\Triv(\xi_f)$ of trivializations of the
$2$-extension $\xi_f$ obtained as the Baer sum of the two $2$-extensions of $C''$ by $A$ which form
the frame $f$. 

We will now make sufficient assumptions on the category $\C$ so that we can work around the 
fact that $\pi$ is not an equivalence after passing from groupoids to functions by means of the 
monoidal functor
\[
	\F: \Span^f(\Grpd) \lra \Vect_{\QQ}.
\]
Namely, we will assume that the abelian category $\C$ is
\begin{enumerate}
	\item {\em finitary} in the sense of Definition \ref{defi:finitary},
	\item {\em cofinitary}: every object of $\C$ has only finitely many subobjects,
	\item {\em hereditary}: for every pair of objects $A,B$ of $\C$, we have $\Ext^i(A,B) \cong
		0$ for $i>1$.
\end{enumerate}
We have seen that the condition on $\C$ to be finitary implies that the abstract Hall algebra
defines an algebra object in $\Span^f(\Grpd) \subset \Span(\Grpd)$. 

\begin{prop}\label{prop:assumptions} Let $\C$ be a finitary abelian category. 
	\begin{enumerate}
		\item The condition on $\C$ to be cofinitary implies that the object $(\S_1,\Delta, c)$ 
			defines a coalgebra object in $\Span^f(\Grpd)$. 
		\item The condition on $\C$ to be hereditary implies that all $2$-fibers of $\pi$
			are nonempty.
	\end{enumerate}
\end{prop}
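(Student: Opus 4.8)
The plan is to treat the two parts separately, each leveraging results already in place. For part (1), the coalgebra axioms themselves require no fresh work: the comultiplication $\Delta$ and counit $c$ are obtained by reversing the spans $\mu$ and $e$, and reading the entire proof of Theorem \ref{theo:abshall} with every span traversed in the opposite direction converts coassociativity and counitality into exactly the statements established there (the $2$-Segal squares \eqref{eq:segal} and \eqref{eq:unital} used in that argument are symmetric under this reversal). Thus the only genuine content of (1) is to verify that $\Delta$ and $c$ actually lie in the subcategory $\Spanf(\Grpd)$, i.e. that their legs satisfy the required finiteness conditions, and that $\Spanf(\Grpd)$ is closed under the composites appearing in the coalgebra diagrams --- the latter being automatic from Proposition \ref{prop:basefun}.

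For the finiteness check I would examine the two legs of $\Delta$, namely $F\colon \S_2 \to \S_1$ (the leg pointing to the source) and $G\colon \S_2 \to \S_1 \times \S_1$ (the leg pointing to the target). Membership in $\Spanf(\Grpd)$ demands that $F$ be $\pi_0$-finite and that $G$ be locally finite. The leg $G$ has $2$-fiber over a pair $(A,B)$ equivalent to the groupoid of extensions of $B$ by $A$, whose isomorphism classes are counted by $\Ext^1(B,A)$ and whose automorphism groups are isomorphic to $\Hom(B,A)$; hence $G$ is even finite, using only that $\C$ is finitary. The leg $F$ is where cofinitariness enters: the fiber of $\pi_0(\S_2) \to \pi_0(\S_1)$ over an isomorphism class $[M]$ is the set of $\Aut(M)$-orbits of subobjects of $M$, which is finite precisely because every object of a cofinitary category has finitely many subobjects. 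For the counit $c$, the relevant leg $\S_0 \to \S_1$ has a single point as source, so its $\pi_0$-finiteness is trivial; hence $c$ lies in $\Spanf(\Grpd)$ without further hypotheses.

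For part (2) the argument is nearly immediate once the preceding structure theory is invoked. By Lemma \ref{lem:2fib} the $2$-fiber $\square_f$ over a frame $f$ is equivalent to the groupoid $\Triv(\xi_f)$, where $\xi_f$ is the Baer-sum $2$-extension \eqref{eq:yonedasum}, representing a class in $\Ext^2(C'',A)$. Proposition \ref{prop:triv}(1) states that $\Triv(\xi_f)$ is nonempty if and only if this class vanishes. If $\C$ is hereditary, then $\Ext^2(C'',A) \cong 0$ for all objects, so $[\xi_f]$ is automatically trivial and every $2$-fiber $\square_f$ is nonempty, as claimed.

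I expect the only real friction to be in part (1): not in any single computation, but in carefully matching each finiteness condition to the correct leg of the reversed span and confirming that cofinitariness is exactly the hypothesis needed to make the source-pointing leg $F$ be $\pi_0$-finite, while finitariness already handles the target-pointing leg $G$. Part (2) presents essentially no obstacle beyond correctly identifying the obstruction class as the Baer sum living in $\Ext^2(C'',A)$, which is supplied by the discussion preceding the statement.
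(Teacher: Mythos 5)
Your argument is correct and is exactly the intended one: the paper states this proposition without proof, and the surrounding text supplies precisely the ingredients you use (reversal of the spans from Theorem \ref{theo:abshall} for the coalgebra axioms, identification of the $2$-fibers of $G$ with groupoids of extensions governed by $\Ext^1$ and $\Hom$, identification of the $\pi_0$-fibers of $F$ with $\Aut(M)$-orbits of subobjects, and Lemma \ref{lem:2fib} together with Proposition \ref{prop:triv}(1) for part (2)). The only point worth flagging is that Proposition \ref{prop:triv} was established under the standing assumption that the abelian category has enough projectives (or, dually, enough injectives), so part (2) implicitly carries that hypothesis.
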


Consider the commutative diagram
\[
	\xymatrix{ \square \ar[dr]^{\pi} \ar@/_/[ddr]_{L'} \ar@/^/[drr]^{R'}& \\
	& \frame \ar[r]^{R}\ar[d]_{L} & \S_2 \times \S_2 \ar[r]^{F \times F} \ar[d]_{P} & \S_1 \times \S_1\\
		& \S_2 \times \S_2 \ar[r]^-{G \times G}\ar[d]_{F \times F} & \S_1 \times \S_1 \times \S_1 \times \S_1 & \\
		& \S_1 \times \S_1 & & }
\]
The failure of the equality \eqref{eq:compat2} after passing to functions is given by 
\[
	(R')_!(L')^* \overset{?}{\ne} (R)_!(L)^*.
\]
We compute the left-hand side explicity: letting 
\[
	\varphi = \one_{(A \to B \to C,  A'' \to B'' \to C'')} \in \F(\S_2 \times \S_2),
\]
we have
\begin{equation}\label{eq:scale}
	(R')_!(L')^*(\varphi)  = R_!\pi_!\pi^*L^*(\varphi)
	= \frac{|\Ext^1(C'',A)|}{|\Hom(C'',A)|} R_!L^*(\varphi).
\end{equation}
The last equality follows from Lemma \ref{lem:transfer} below since, using Proposition
\ref{prop:triv} together with the assumption on $\C$ to be hereditary, we have
\[
	|\square_f| = |\Triv(\xi_f)| = \frac{|\Ext^1(C'',A)|}{|\Hom(C'',A)|}.
\]

\begin{lem} \label{lem:transfer} Let $F: \A \to \B$ be a functor which is both $\pi_0$-finite and locally finite. Let $b$
	be an object of $\B$. Then, for every $\varphi \in \F(\B)$, we have
	\[
		(F_!F^*\varphi)(b) = |\A_b| \varphi(b)
	\]
	so that the effect of $F_!F^*$ on the function $\varphi$ is given by rescaling with the
	groupoid cardinalities of the $2$-fibers of $F$.
\end{lem}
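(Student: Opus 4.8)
The plan is to unwind the three operations involved---the pullback $F^*$, the pushforward $F_!$, and the groupoid integral---and then to isolate a single constancy observation that collapses the whole computation. By the definitions of $F_!$ and $F^*$ we have
\[
	(F_!F^*\varphi)(b) = \int_{\A_b} (F^*\varphi)_{|\A_b},
\]
where $\A_b$ is the $2$-fiber of $F$ over $b$: its objects are pairs $(a,\psi)$ with $a \in \A$ and $\psi\colon F(a) \overset{\cong}{\to} b$ an isomorphism in $\B$, and a morphism $(a,\psi) \to (a',\psi')$ is a morphism $f\colon a \to a'$ in $\A$ with $\psi' \circ F(f) = \psi$. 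So the task reduces to evaluating this integral.

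The key observation I would make is that the integrand $(F^*\varphi)_{|\A_b}$ is the constant function with value $\varphi(b)$. Indeed, for an object $(a,\psi)$ of $\A_b$ we have $(F^*\varphi)(a) = \varphi(F(a))$, and since $\psi$ exhibits an isomorphism $F(a) \cong b$ and $\varphi$ is by definition constant on isomorphism classes, this equals $\varphi(b)$. Pulling the constant out of the groupoid integral then gives
\[
	(F_!F^*\varphi)(b) = \varphi(b) \int_{\A_b} \one = \varphi(b)\, |\A_b|,
\]
using the identity $\int_{\A_b}\one = |\A_b|$ valid for a finite groupoid.

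The one point that genuinely requires both hypotheses---and which I expect to be the only real obstacle---is justifying that $|\A_b|$ makes sense, i.e.\ that $\A_b$ is a \emph{finite} groupoid, so that its cardinality and the equality $\int_{\A_b}\one=|\A_b|$ are legitimate. Here I would decompose $\A_b$ along the connected components of $\A$, writing it as the disjoint union of the fibers $(\A(a))_b$ over representatives $[a] \in \pi_0(\A)$. A component contributes a nonempty groupoid only when $F(a) \cong b$, i.e.\ when $[a]$ lies in the fiber of $\pi_0(\A) \to \pi_0(\B)$ over $[b]$, and this fiber is finite precisely by $\pi_0$-finiteness of $F$; meanwhile each contributing piece $(\A(a))_b$ is finite precisely by local finiteness of $F$. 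Thus $\A_b$ is a finite union of finite groupoids, hence finite, and the computation above is complete. The remainder is a routine bookkeeping verification that this decomposition is compatible with the groupoid integral, which I would leave implicit.
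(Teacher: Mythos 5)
Your proof is correct and is exactly the unwinding of definitions that the paper intends, since its own proof of this lemma reads only ``This follows immediately from the definitions.'' The constancy of $(F^*\varphi)_{|\A_b}$ via invariance of $\varphi$ under isomorphism, together with the finiteness of $\A_b$ extracted from the two hypotheses, is precisely the content being left implicit there.
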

\begin{proof} This follows immediately from the definitions.
\end{proof}

To compensate for the rescaling factor 
\[
\frac{|\Ext^1(C'',A)|}{|\Hom(C'',A)|}
\]
appearing in \eqref{eq:scale}, we use
the following modifications:
\begin{enumerate}
	\item Instead of the comultiplication $\Delta$, we use the comultiplication
		represented by the span
		\[
			\Delta': \vcenter{\xymatrix{ & \ar[dl]_F \S_2 \ar[dr]^{G'} &\\ \S_1  & & \S_1 \times \S_1}}
		\]
		where $G'$ assigns to a short exact sequence $A \to B \to C$ the pair of objects
		$(C,A)$ (instead of $(A,C)$).
	\item Letting $H = \F(\S_1,\mu,e)$ we define on $H \otimes H$ the {\em twisted}
		algebra structure
		\[
			\mu^t: (H \otimes H) \otimes (H \otimes H) \lra H \otimes H 
		\]
		given by setting
		\[
			(\one_A \otimes \one_B) (\one_{A'} \otimes \one_{B'}) :=  \frac{|\Ext^1(A',B)|}{|\Hom(A',B)|}(\one_A \one_{A'})
			\otimes (\one_B \one_{B'}).
		\]
\end{enumerate}

We arrive at the main result of this section.

\begin{theo}[Green] Let $\C$ be a finitary, cofinitary, and hereditary abelian category and consider
	the datum $H  = \F(\S_1, \mu, e, \Delta', c)$. Then the diagram
	\[
		\xymatrix{ 
			H \otimes H \ar[r]^\mu \ar[d]^{\Delta' \otimes \Delta'} &  H
			\ar[d]^{\Delta'}\\
			(H \otimes H) \otimes (H \otimes H) \ar[r]_-{\mu^t}  &   H \otimes H }
	\]
	commutes. 
\end{theo}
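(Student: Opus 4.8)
The plan is to apply the monoidal functor $\F \colon \Spanf(\Grpd) \to \Vect_\QQ$ to the whole square and verify the resulting identity of $\QQ$-linear maps on the basis functions $\one_B \otimes \one_{B''}$. Both composites are images under $\F$ of morphisms in $\Span(\Grpd)$ that have already been identified in the preceding discussion: the top-right path $\Delta' \circ \mu$ is represented, after invoking Lemma \ref{lem:eq1} to trade the cross groupoid $\cross$ for the square groupoid $\square$, by the span $\S_1 \times \S_1 \xleftarrow{L'} \square \xrightarrow{R'} \S_1 \times \S_1$, while the bottom-left composite $\mu^t \circ (\Delta' \otimes \Delta')$, stripped of its twist, is the image of the span $\S_1 \times \S_1 \xleftarrow{L} \frame \xrightarrow{R} \S_1 \times \S_1$ through the frame groupoid. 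The forgetful functor $\pi \colon \square \to \frame$, whose legs satisfy $L' = L \circ \pi$ and $R' = R \circ \pi$, is the only thing separating the two, so the entire argument reduces to controlling $\pi$ after passage to functions.

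First I would exploit the factorizations $L' = L \circ \pi$ and $R' = R \circ \pi$ together with the functoriality clause of Proposition \ref{prop:basefun} to split the operator attached to the square-span as $(R')_! (L')^* = R_! \, \pi_! \pi^* \, L^*$. Lemma \ref{lem:transfer} then tells me that the inner factor $\pi_! \pi^*$ acts on a function supported at a frame $f$ simply by multiplying by the groupoid cardinality $|\square_f|$ of the $2$-fiber; this is exactly the computation already recorded in \eqref{eq:scale}. Thus $\F(\Delta' \circ \mu)$ equals the frame operator $R_! L^*$ weighted pointwise, frame by frame, by $|\square_f|$.

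Next I would pin down this weight. By Lemma \ref{lem:2fib} the fiber $\square_f$ is $\Triv(\xi_f)$, where $\xi_f$ is the Baer-sum $2$-extension of $C''$ by $A$ from \eqref{eq:yonedasum} attached to the corners of the frame. The hereditary hypothesis forces $\Ext^i(C'',A) = 0$ for $i > 1$, so $\xi_f$ is trivial and Proposition \ref{prop:assumptions}(2) guarantees the fiber is nonempty; Proposition \ref{prop:triv} then yields $|\square_f| = |\Triv(\xi_f)| = |\Ext^1(C'',A)| / |\Hom(C'',A)|$, the isomorphism classes forming an $\Ext^1(C'',A)$-torsor and each automorphism group being $\Hom(C'',A)$. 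On the other side, evaluating $\mu^t \circ (\Delta' \otimes \Delta')$ on $\one_B \otimes \one_{B''}$ produces the same sum over frames: the component $\Delta'(\one_B)$ supplies the top row $A \to B \to C$ and the component $\Delta'(\one_{B''})$ the bottom row $A'' \to B'' \to C''$, and the definition of the twisted product $\mu^t$ inserts precisely the scalar $|\Ext^1(C'',A)| / |\Hom(C'',A)|$ into each term before forming the Hall products along the two columns. Since this scalar coincides frame-by-frame with the fiber weight $|\square_f|$, the two composites agree and the square commutes.

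The hard part is bookkeeping rather than any fresh conceptual ingredient. One must check that under the relabelling forced by $\Delta'$ --- which outputs the corner pair $(C,A)$ rather than $(A,C)$ --- the generic indices $A',B$ in the defining formula for $\mu^t$ are identified with the outer sub/quotient $C''$ and $A$ of the frame, so that the twist factor $|\Ext^1(A',B)| / |\Hom(A',B)|$ is literally the fiber factor $|\Ext^1(C'',A)| / |\Hom(C'',A)|$ in every summand; getting this identification backwards would produce the wrong twist and break commutativity. It is also worth confirming at the outset, via Proposition \ref{prop:assumptions}(1), that cofinitarity makes $(\S_1, \Delta', c)$ a genuine coalgebra object in $\Spanf(\Grpd)$, so that every arrow in the diagram is a well-defined morphism of $\QQ$-vector spaces before the comparison is even attempted.
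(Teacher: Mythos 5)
Your proposal is correct and follows essentially the same route as the paper: factoring the square-span operator as $R_!\,\pi_!\pi^*\,L^*$, computing the fibers of $\pi$ via Lemma \ref{lem:2fib} and Proposition \ref{prop:triv} as $|\Ext^1(C'',A)|/|\Hom(C'',A)|$ under the hereditary hypothesis, and matching this frame-by-frame against the twist built into $\mu^t$, exactly as in \eqref{eq:scale}. The closing remark about the index bookkeeping forced by $\Delta'$ returning $(C,A)$ rather than $(A,C)$ is precisely the point the paper's reorientation of the comultiplication is designed to handle.
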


\subsubsection{Example}

Note, that we have investigated the compatibility of product and coproduct without determining an
explicit formula for the coproduct. We now provide a formula and analyze the compatibility of multiplication
and comultiplication of product and coproduct for the category $\Vect_{\FF_q}'$ of finite
dimensional $\FF_q$-vector spaces. 

Let $\C$ be a finitary and cofinitary abelian category. 
For objects $A,A'$ in $\C$, we introduce the groupoid $\GExt(A',A)$ with objects given by short exact
sequences
\[
	0 \lra A \lra X \lra A' \lra 0
\]
in $\C$ and morphisms given by diagrams
\[
	\xymatrix{
		A \ar[r]\ar[d]^{\id} & X \ar[r] \ar[d]^{\cong}& A' \ar[d]^{\id} \\
	A \ar[r] & X' \ar[r] & A' }
\]
We denote by $\GExt(A',A)^B$ the full subgroupoid of $\GExt(A',A)$ consisting of those short exact
sequences where $X \cong B$.
With this notation, we have
\[
	\Delta'(\one_{[B]}) = (G')_!F^*(\one_{[B]}) = \sum_{[A'],[A]} |\GExt(A',A)^B| \one_{[A']}
	\otimes \one_{[A]}. 
\]

\begin{exa} For the category of finite dimensional $\FF_q$-vector spaces we have
	\[
		\F(\S_1) \cong \bigoplus_{n \ge 0} \QQ \one_n
	\]
	where we set $\one_n :=
	\one_{[\FF_q^n]}$. Further, we have seen
	\[
		\one_n \one_m = \left[ \begin{array}{c} n+m\\ m \end{array} \right]_q \one_{m + n}.
	\]
	We compute
	\begin{align*}
		\Delta'(\one_n) & = \sum_{k+l = n} |\GExt(\FF_q^k,\FF_q^l)^{\FF_q^n}| \one_k \otimes
		\one_l \\
		& = \sum_{k+l = n} q^{-kl} \one_k \otimes
		\one_l. 
	\end{align*}
	Thus, we have 
	\begin{align*}
	\Delta'(\one_m \one_n) & = \left[ \begin{array}{c} n+m\\ n \end{array} \right]_q \sum_{x+y = n+m} q^{-xy} \one_x \otimes \one_y
	\end{align*}
	and
	\begin{align*}
		\Delta'(\one_m) \Delta'(\one_n)&= (\sum_{k+l = m} q^{-kl} \one_k \otimes
		\one_l)(\sum_{r+s = n} q^{-rs} \one_r \otimes \one_s)\\
		& = \sum_{n = r+s, m = k+l} q^{-kl-rs-rl} \one_{k+r} \otimes \one _{l+s}\\
		& = \sum_{n+m = x+ y} q^{-xy} \sum_{k \le n} q^{k(n-l)} \left[ \begin{array}{c} x\\ k \end{array} \right]_q
		\left[ \begin{array}{c} y\\ n-k \end{array} \right]_q.
	\end{align*}
	The compatibility of product and coproduct up to twist therefore amounts to the formula
	\[
		\left[ \begin{array}{c} n+m\\ n \end{array} \right]_q = \sum_{k \le n} q^{k(n-k)} 
		\left[ \begin{array}{c} x\\ k \end{array} \right]_q
		\left[ \begin{array}{c} y\\ n-k \end{array} \right]_q
	\]
	where $x+y = n+m$.
\end{exa}

\newpage

\subsection{Hall monoidal categories}
\label{sec:hallmonoidal}

The construction of the abstract Hall algebra in Section \ref{subsec:spans} only makes use of the groupoids
$\S_{\le 3}$ and the $2$-Segal conditions involving them. In this section, we 
use {\em functors} instead of functions to define the {\em Hall monoidal category} of a finitary
proto-abelian category $\C$. This construction utilizes the $2$-Segal conditions involving $\S_{\le
4}$. 

Given a groupoid $\A$, we denote by $\Fun(\A)$ the category of functors from $\A$ to the
category $\Vect_{\CC}$ of finite dimensional complex vector spaces which are nonzero on only finitely 
many isomorphism classes of $\A$. Let $F:\A \to \B$ be a functor of groupoids. We have:
\begin{itemize}
	\item if $F$ is $\pi_0$-finite, then we have a corresponding pullback functor
		\[
			F^*: \Fun(\B) \lra \Fun(\A), \varphi \mapsto \varphi \circ F.
		\]
	\item if $F$ is locally finite, then we have a pushforward functor 
		\[
			F_!: \Fun(\A) \lra \Fun(\B)
		\]
		which is defined as a left Kan extension functor. By the pointwise formula for Kan
		extensions, we have
		\[
			F_!(\varphi)(b) = \colim_{\A_b} \varphi_{|\A_b}
		\]
		where $\A_b$ denotes the $2$-fiber of $F$ over $b$.
\end{itemize}

These operations satisfy the following compatibility conditions (in analogy to Proposition
\ref{prop:basefun}).

\begin{prop}\label{prop:basefunkan}
	\begin{enumerate}
		\item {\em Functoriality.} 
			\begin{enumerate}
				\item Let $F: \A \to \B$ and $G: \B \to \C$ be $\pi_0$-finite functors
			of groupoids. Then we have 
			\[
				(G \circ F)^* = F^* \circ G^*.
			\]
				\item Let $F: \A \to \B$ and $G: \B \to \C$ be locally finite functors
			of groupoids. Then we have a
			canonical isomorphism
			\[
				(G \circ F)_! \cong G_! \circ F_!.
			\]
			\end{enumerate}
		\item {\em Base change.} Let 
			\begin{equation}\label{eq:2pul}
					\xymatrix{ \X \ar[d]_{G'}\ar[r]^{F'} & \B\ar[d]^G \\
					\A \ar[r]^F & \C \ultwocell<\omit>}
			\end{equation}
			be a $2$-pullback square with $F$ locally finite and $G$ $\pi_0$-finite.
			Then we have a canonical isomorphism
			\[
				(F')_! \circ (G')^* \cong G^* \circ F_!.
			\]
	\end{enumerate}
\end{prop}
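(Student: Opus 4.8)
Parts (1)(a) and (1)(b) are formal. For (1)(a) the pullback is precomposition, $F^*(\varphi) = \varphi \circ F$, whence
\[
(G \circ F)^*(\varphi) = \varphi \circ G \circ F = F^*\bigl(G^*(\varphi)\bigr)
\]
as a strict equality. For (1)(b), $F_!$ is by definition the pointwise left Kan extension along $F$, and the local finiteness of $F$ and $G$ makes every relevant $2$-fiber a finite groupoid, so all the defining colimits exist in $\Vect_{\CC}$. The canonical isomorphism $(G \circ F)_! \cong G_! \circ F_!$ is then an instance of the compositionality of pointwise left Kan extensions; concretely, evaluating at $c \in \C$, it is the Fubini-type identification of the iterated colimit computing $(G_! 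F_! \varphi)(c)$ with the single colimit over the $2$-fiber of $G \circ F$ at $c$, the latter fibering over the $2$-fiber of $G$ at $c$ with fibers the $2$-fibers of $F$.

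For the base change (2) I would argue pointwise using the colimit formula for $F_!$. Fix $b \in \B$. The right-hand side evaluates to
\[
(G^* F_! \varphi)(b) = (F_! \varphi)(G(b)) = \colim_{\A_{G(b)}} \varphi_{|\A_{G(b)}},
\]
the colimit over the $2$-fiber of $F$ at $G(b)$, while the left-hand side evaluates to
\[
\bigl((F')_! (G')^* \varphi\bigr)(b) = \colim_{\X_b} \bigl((G')^*\varphi\bigr)_{|\X_b},
\]
the colimit over the $2$-fiber $\X_b$ of $F'$ at $b$. Since a colimit depends only on the equivalence class of its diagram, it suffices to produce an equivalence $\X_b \simeq \A_{G(b)}$ carrying $\bigl((G')^*\varphi\bigr)_{|\X_b}$ to $\varphi_{|\A_{G(b)}}$, and then to check that the resulting isomorphism is natural in $b$ and in $\varphi$.

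This comparison of fibers is the crux, and it is exactly where the $2$-pullback hypothesis enters. Using the description $\X \simeq \A \times^{(2)}_{\C} \B$, an object of $\X_b$ is a datum $(x, \psi \colon F'(x) \xrightarrow{\cong} b)$; applying $G'$ together with the structure isomorphism $\eta \colon F G' \Rightarrow G F'$ of the square produces $\bigl(G'(x),\, G(\psi) \circ \eta(x) \colon F(G'(x)) \xrightarrow{\cong} G(b)\bigr)$, i.e.\ an object of the $2$-fiber $\A_{G(b)}$. This assignment is an equivalence of groupoids, and under it the canonical functor $(G')_{|\X_b}\colon \X_b \to \A$ corresponds to the canonical functor $\A_{G(b)} \to \A$; consequently $\bigl((G')^*\varphi\bigr)_{|\X_b} = \varphi \circ (G')_{|\X_b}$ is carried to $\varphi_{|\A_{G(b)}}$, as required.

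The main obstacle is not any hard computation but the $2$-categorical bookkeeping: confirming that the fiber comparison is the equivalence induced by $G'$ and $\eta$, that it intertwines the two restricted functors (up to the canonical natural isomorphism the colimit absorbs), and that the comparison maps assembled pointwise are natural in $b$ and $\varphi$, so that they glue to a natural isomorphism $(F')_! (G')^* \cong G^* F_!$. I would organize this precisely as in the function-level statement, Proposition \ref{prop:basefun}, replacing the groupoid integral by the colimit and invoking invariance of colimits under equivalence of the indexing groupoid in place of invariance of groupoid cardinality; the skeleton of the two arguments is identical.
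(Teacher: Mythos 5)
The paper states this proposition without proof (just as it does its function-level counterpart, Proposition \ref{prop:basefun}), so there is nothing to compare against; your argument is the standard one and is correct. The identification of the $2$-fiber $\X_b$ with $\A_{G(b)}$ via $G'$ and the structure isomorphism $\eta$ is indeed the crux of base change, and the remaining points you flag (naturality in $b$ and $\varphi$, and the finiteness bookkeeping guaranteeing that the colimits exist in $\Vect_{\CC}$ and that the pushforwards remain finitely supported) are routine, so your proposal correctly fills in what the paper leaves implicit.
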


Let $\C$ be a finitary proto-abelian category. Given a span of groupoids
\[
	\vcenter{\xymatrix{ & \ar[dl]_L \X \ar[dr]^{R} &\\ \A  & & \B}}
\]
with $L$ $\pi_0$-finite and $R$ locally finite, we obtain a corresponding functor
\[
	R_! \circ L^*: \Fun(\A) \lra \Fun(\B).
\]
Applying this to the span
\[
	\vcenter{\xymatrix{ & \ar[dl]_G \S_2 \ar[dr]^F &\\ \S_1\times \S_1  & & \S_1}}
\]
yields a functor $\Fun(\S_1 \times \S_1) \to \Fun(\S_1)$ which we precompose with the pointwise
tensor product $\Fun(\S_1) \times \Fun(\S_1) \to \Fun(\S_1 \times \S_1)$ to obtain 
\[
	\otimes: \Fun(\S_1) \times \Fun(\S_1) \lra \Fun(\S_1).
\]
Further, from the span
\[
	\vcenter{\xymatrix{ & \ar[dl]_{\id} \S_0 \ar[dr]^{\sigma_0} &\\ \S_0  & & \S_1}}
\]
we obtain a functor $\Fun(\S_0) \lra \Fun(\S_1)$ which we evaluate on $k$ to obtain 
\[
	I \in \Fun(\S_1).
\]

\begin{theo} Let $\C$ be a finitary proto-abelian category. The datum $(\Fun(\S_1), \otimes, I)$ naturally extends to a
	monoidal structure on the category $\Fun(\S_1)$.
\end{theo}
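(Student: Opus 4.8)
The plan is to follow the proof of Theorem~\ref{theo:abshall} almost verbatim, replacing the groupoid-function functor $\F$ by the functor-valued assignment $\Fun$ and replacing the strict identities of Proposition~\ref{prop:basefun} by the canonical natural isomorphisms of Proposition~\ref{prop:basefunkan}. First I would construct the associativity constraint. As in the proof of Theorem~\ref{theo:abshall}, both iterated products $(X \otimes Y) \otimes Z$ and $X \otimes (Y \otimes Z)$ are computed by composing the multiplication span with itself, and the two triangulations of the square with vertices $\{0,1,2,3\}$ identify each composite with the functor $T_! R^*$ attached to the single span
\[
	\S_1 \times \S_1 \times \S_1 \overset{R}{\longleftarrow} \S_3 \overset{T}{\lra} \S_1.
\]
In the present setting these identifications are no longer equalities but canonical isomorphisms, assembled from the functoriality isomorphisms for $(-)_!$ and $(-)^*$ together with the base-change isomorphism attached to the $2$-pullback square~\eqref{eq:segal} of the relevant diagonal. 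Composing the isomorphism produced by one triangulation with the inverse of the one produced by the other yields a natural isomorphism
\[
	\alpha_{X,Y,Z}: (X \otimes Y) \otimes Z \overset{\cong}{\lra} X \otimes (Y \otimes Z).
\]
The left and right unit constraints are obtained in the same way from the unitality $2$-Segal squares~\eqref{eq:unital} and the unit span $\S_0 \overset{\sigma_0}{\lra} \S_1$.

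The genuinely new content, beyond Theorem~\ref{theo:abshall}, is the verification of the coherence axioms, and this is exactly where the groupoids $\S_{\le 4}$ enter. For the pentagon, the five bracketings of $X \otimes Y \otimes Z \otimes W$ correspond to the five triangulations of the pentagon with vertices $\{0,1,2,3,4\}$; by the $2$-Segal conditions~\eqref{eq:segal} for these triangulations, each bracketing is canonically identified with the functor attached to the single span $\S_1 \times \S_1 \times \S_1 \times \S_1 \leftarrow \S_4 \to \S_1$, and the five edges of the pentagon are the associators realizing the elementary diagonal flips between adjacent triangulations. The pentagon therefore reduces to the statement that the two composites of base-change and functoriality isomorphisms connecting a fixed pair of triangulations along the two admissible chains of flips agree; both are the unique comparison isomorphism determined by the universal property of the iterated $2$-pullbacks computing $\S_4$. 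The triangle axiom is handled identically, comparing the two subdivisions of the relevant polygon that involve the degenerate edge of~\eqref{eq:unital}.

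The main obstacle is not the existence of $\alpha$ and the unitors but the bookkeeping required to see that the isomorphisms of Proposition~\ref{prop:basefunkan} are mutually coherent, so that the pentagon commutes on the nose rather than up to a further correction. The conceptual way to discharge this obligation -- and the route indicated by the reference to Day's theory~\cite{day} -- is to recognize the whole construction as an instance of monoidal convolution: the assignment $\A \mapsto \Fun(\A)$ together with $(\A \overset{L}{\leftarrow} \X \overset{R}{\to} \B) \mapsto R_! L^*$ upgrades to a monoidal pseudofunctor out of $\Spanf(\Grpd)$, and the $2$-Segal groupoid $\S_{\bullet}$ exhibits $(\S_1, \mu, e)$ as a pseudomonoid therein, with associator and pentagon coherence supplied precisely by the $2$-pullback squares coming from the triangulations of the square and the pentagon. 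Transporting this pseudomonoid along $\Fun$ then yields the monoidal structure on $\Fun(\S_1)$ with no further coherence to check, since each $R_!$ is a left Kan extension and inherits the required coherences from the universal property of the defining colimits.
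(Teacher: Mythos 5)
Your proposal follows essentially the same route as the paper: the associator is built from the two triangulations of the square via the middle object $\varphi\otimes\psi\otimes\xi$ defined by the span $\S_1^{\times 3}\leftarrow\S_3\to\S_1$, and the pentagon is extracted from the commutative diagram of equivalences relating $\S_4$ to the five subdivisions and five triangulations of the pentagon. The only cosmetic difference is how the residual coherence is discharged --- the paper points to Grothendieck fibrations over $\Delta^{\op}$ while you invoke the Day-convolution/pseudomonoid picture, which the paper itself endorses in the remark immediately following the theorem.
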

\begin{proof} 
	We sketch the basic idea of the proof, the main point being the derivation of MacLane's pentagon. 
	There are five $2$-Segal conditions involving $\S_4$, 
	corresponding to the five possible subdivisions of a planar pentagon. We denote by $\P_{ij}$
	the subdivision $0 \le i < j \le 4$ of the pentagon. We further introduce the
	notation
	\begin{equation}\label{eq:subdiv}
		\S(\P_{ij}) = \S_{\{0,\dots,i,j,\dots,4\}} \times_{\S_{\{i,j\}}} \S_{\{i,\dots,j\}}
	\end{equation}
	for the corresponding pullback so that, for every subdivision, we have, by the $2$-Segal
	property, an equivalence
	\[
		\S_4 \lra \S(\P_{ij}).
	\]
	For example, we have
	\[
		\S(\P_{13}) = \S_{\{0,1,3,4\}} \times_{\S_{\{1,3\}}} \S_{\{1,2,3\}} \cong \S_3
		\times_{\S_1} \S_2
	\]
	Similarly, we label the five different triangulations $\T_{ij,kl}$ of the pentagon
	via their internal edges $i \to j$ and $k \to l$. We use the notation
	$\S(\T_{ij,kl})$ analogous to \eqref{eq:subdiv} so that we have, for example,
	\[
		\S(\T_{13,14}) = \S_{\{0,1,4\}} \times_{\S_{\{1,4\}}} \S_{\{1,3,4\}}
		\times_{X_{\{1,3\}}} \S_{\{1,2,3\}} \cong \S_2
		\times_{\S_1} \S_2\times_{\S_1} \S_2.
	\]
	We obtain a commutative diagram of groupoids 
	\begin{equation}
		\label{eq:huge-pentagon}
		{ 
			\begin{tikzpicture}

				\node (origin) at (0,0) (origin) {$\S_4$}; 
				\node (P0) at (0:4cm)  { $\S(\T_{02,03}) $}; 
				\node (P2) at (1*72:4cm) {$\S(\T_{02,24})$};
				\node (P4) at (2*72:4cm) {$\S(\T_{14,24})$}; 
				\node (P1) at (3*72:4cm) {$\S(\T_{13,14})$}; 
				\node (P3) at (4*72:4cm) {$\S(\T_{03,13})$}; 
				\path (P0) -- (P2)
				node [midway] (P02) {$\S(\P_{02})$}; 
				\path (P2) -- (P4) node [midway] (P24) {$\S(\P_{24})$}; 
				\path (P4) -- (P1) node [midway] (P14) {$\S(\P_{14})$ };
				\path (P1) -- (P3) node [midway] (P13) {$\S(\P_{13})$};
				\path (P0) -- (P3) node [midway] (P03) {$\S(\P_{03})$}; 
				\draw [-latex,  thick] (P02) -- (P0);
				\draw   [-latex,  thick] (P02) -- (P2);
				\draw  [-latex, thick] (origin) -- (P0);
				\draw  [-latex,  thick] (origin) -- (P1);
				\draw  [-latex,  thick] (origin) -- (P2);
				\draw  [-latex,  thick] (origin) -- (P3);
				\draw  [-latex,  thick] (origin) -- (P4);
				\draw  [-latex,  thick] (origin) -- (P02);
				\draw  [-latex,  thick] (origin) -- (P24);
				\draw  [-latex,  thick] (origin) -- (P14);
				\draw  [-latex,  thick] (origin) -- (P13);
				\draw  [-latex,  thick] (origin) -- (P03);
				\draw [-latex,  thick] (P24) -- (P2); 
				\draw [-latex,  thick] (P24) -- (P4); 
				\draw [-latex,  thick] (P14) -- (P4); 
				\draw [-latex,  thick] (P14) -- (P1); 
				\draw [-latex,  thick] (P13) -- (P3); 
				\draw [-latex,  thick] (P13) -- (P1); 
				\draw [-latex,  thick] (P03) -- (P0); 
				\draw [-latex,  thick] (P03) -- (P3); 

			\end{tikzpicture}
		}
	\end{equation}
	in which, by the various $2$-Segal conditions, all functors are equivalences.

	The basic idea of the argument is now as follows: Let $\varphi,\psi,\xi$ be objects in
	$\Fun(\S_1)$. We have a diagram of groupoids
	\[
		\xymatrix{
			\S_{\{0,1,2\}} \times_{\S_{\{0,2\}}} \S_{\{0,2,3\}} & \ar[l]
			\S_{\{0,1,2,3\}} \ar[r] & \S_{\{0,1,3\}}\times_{\S_{\{1,3\}}}
			\S_{\{1,2,3\}}} 
	\]
	where, by the lowest $2$-Segal conditions corresponding to the two triangulations of a
	square, both functors are equivalences. They are responsible for isomorphisms
	\[
		\xymatrix{
			(\varphi \otimes \psi) \otimes \xi &\ar[l]_{\cong}^{\alpha_1} \varphi \otimes \psi \otimes \xi
			\ar[r]^{\cong}_{\alpha_2}  & \varphi \otimes (\psi \otimes \xi) }
	\]
	which we use to define the associator of the monoidal structure as $\alpha = \alpha_2 \circ
	\alpha_1^{-1}$. Here, the middle term $\varphi \otimes \psi \otimes \xi$ is defined as a
	pull-push along the span
	\[
		\xymatrix{
			\S_{\{0,1\}} \times \S_{\{1,2\}} \times \S_{\{2,3\}} & \ar[l]
			\S_{\{0,1,2,3\}} \ar[r] & \S_{\{0,3\}}}.
	\]
	Given four objects $\varphi,\psi,\xi,\varepsilon$, the commutative diagram
	\eqref{eq:huge-pentagon} is responsible for a commutative diagram of isomorphisms
	\begin{equation}
		\label{eq:pentagon2}
		{ 
			\begin{tikzpicture}
				{\scriptsize
					\node (origin) at (0,0) (origin) {$\varphi \otimes \psi \otimes \xi \otimes \varepsilon$}; 
					\node (P0) at (0:4cm)  { $((\varphi \otimes \psi) \otimes \xi) \otimes \varepsilon$}; 
					\node (P2) at (1*72:4cm) {$(\varphi \otimes \psi) \otimes (\xi
					\otimes \varepsilon)$};
					\node (P4) at (2*72:4cm) {$\varphi \otimes (\psi \otimes (\xi \otimes
					\varepsilon))$}; 
					\node (P1) at (3*72:4cm) {$\varphi \otimes ((\psi \otimes \xi)
					\otimes \varepsilon)$}; 
					\node (P3) at (4*72:4cm) {$(\varphi \otimes (\psi \otimes \xi)) \otimes \varepsilon$}; 
					\path (P0) -- (P2)
					node [midway] (P02) {$(\varphi \otimes \psi) \otimes \xi \otimes \varepsilon$}; 
					\path (P2) -- (P4) node [midway] (P24) {$\varphi \otimes \psi
					\otimes (\xi \otimes \varepsilon)$}; 
					\path (P4) -- (P1) node [midway] (P14) {$\varphi \otimes (\psi
					\otimes \xi \otimes \varepsilon)$ };
					\path (P1) -- (P3) node [midway] (P13) {$\varphi \otimes (\psi
					\otimes \xi) \otimes \varepsilon$};
					\path (P0) -- (P3) node [midway] (P03) {$(\varphi \otimes \psi
					\otimes \xi) \otimes \varepsilon$}; 
					\draw [-latex,  thick] (P02) -- (P0);
					\draw   [-latex,  thick] (P02) -- (P2);
					\draw  [-latex, thick] (origin) -- (P0);
					\draw  [-latex,  thick] (origin) -- (P1);
					\draw  [-latex,  thick] (origin) -- (P2);
					\draw  [-latex,  thick] (origin) -- (P3);
					\draw  [-latex,  thick] (origin) -- (P4);
					\draw  [-latex,  thick] (origin) -- (P02);
					\draw  [-latex,  thick] (origin) -- (P24);
					\draw  [-latex,  thick] (origin) -- (P14);
					\draw  [-latex,  thick] (origin) -- (P13);
					\draw  [-latex,  thick] (origin) -- (P03);
					\draw [-latex,  thick] (P24) -- (P2); 
					\draw [-latex,  thick] (P24) -- (P4); 
					\draw [-latex,  thick] (P14) -- (P4); 
					\draw [-latex,  thick] (P14) -- (P1); 
					\draw [-latex,  thick] (P13) -- (P3); 
					\draw [-latex,  thick] (P13) -- (P1); 
					\draw [-latex,  thick] (P03) -- (P0); 
					\draw [-latex,  thick] (P03) -- (P3); 
				}
			\end{tikzpicture}
		}
	\end{equation}
	Along the boundary of \eqref{eq:pentagon2}, we extract the commutative MacLane pentagon for
	the tensor product on $\Fun(\S_1)$. To make this argument formally precise is somewhat
	tedious: it is best done by interpreting monoidal structures in terms of Grothendieck fibrations over the category $\Delta^{\op}$. 
\end{proof}

We call the monoidal category $(\Fun(\S_1), \otimes, I)$ the {\em Hall monoidal category}
$\Hall^{\otimes}(\C)$ of $\C$.

\begin{rem} The construction of the Hall monoidal category can be understood as an
	instance of {\em Day convolution} \cite{day}: From the simplicial groupoid of flags we can construct a
	promonoidal structure on the groupoid $\S_1$ which is then turned into a monoidal one by
	passing to functors.
\end{rem}

We discuss two examples:

\subsubsection{$\Hall^{\otimes}(\Vectone)$} 

We restrict attention to the skeleton $\C \subset \Vectone$ consisting of the standard pointed
sets $\{\ast, 1,\dots,n\}$, $n \ge 0$. Thus, we have 
\[
	\S_1(\C) \simeq \coprod_{n\ge 0} B S_n
\]
so that an object of $\Hall^{\otimes}(\C)$ is given by a sequence $(\rho_n)_{n
\ge 0}$ of representations of $S_n$ in $\Vect_{\CC}$ where only finitely many
representations are nonzero. We have
\[
	(\rho_n)_{n \ge 0} \cong \bigoplus_{n \ge 0} \rho_n
\]
where $\rho_n$ is interpreted as an object of $\Hall^{\otimes}(\C)$ which is zero on all groupoids
$B S_m$, $m \ne n$. The tensor product of $\Hall^{\otimes}(\C)$ is additive so that it suffices to
describe $\rho_n \otimes \rho_m$. This is obtained by pull-push along the span of groupoids
\[
	\S_1 \times \S_1 \longleftarrow \S_2 \longrightarrow \S_1
\]
which factors through the pull-push along the span
\[
	B S_n \times B S_m \overset{\cong}{\longleftarrow} B (S_n \times S_m) \longrightarrow B S_{n+m}.
\]
This is obtained by restricting to the subgroupoid of $\S_2$ spanned by a fixed chosen short exact sequence
\begin{equation}\label{eq:sesp}
		\{\ast, 1, \dots, n\} \hookrightarrow \{\ast, 1, \dots, n+m\} \twoheadrightarrow \{\ast, 1, \dots, m\}.
\end{equation}
and noting that this choice determines an embedding of the automorphism group $S_n \times S_m$ of
$\eqref{eq:sesp}$ into the automorphism group $S_{n+m}$ of $\{\ast, 1, \dots, n+m\}$. The tensor
product $\rho_n \otimes \rho_m$ in $\Hall^{\otimes}(\C)$ is therefore given by the induced represention 
of the external tensor product $\rho_n \boxtimes \rho_m$ along the embedding $S_n \times S_m \subset
S_{n+m}$.

The resulting monoidal category plays an important role in classical representation theory. It is
canonically monoidally equivalent to the category of {\em polynomial functors}: functors $F:
\Vect_{\CC} \to
\Vect_{\CC}$ satisfying the following condition
\begin{itemize}
	\item for every collection of morphisms $f_i: V \to W$, $1 \le i \le n$, between fixed
		vector spaces, the expression $F(\lambda_1 f_1 + \dots + \lambda_r f_r)$, $\lambda_i
		\in \CC$, is a function polynomial with coefficients in $\Hom(F(V),F(W))$.
\end{itemize}
The polynomial functor corresponding to the representation $\rho_n$ is given by 
\[
	F: \Vect_{\CC} \to \Vect_{\CC}, V \mapsto (X_n \otimes V^{\otimes n})^{S_n}.
\]
The Hall monoidal structure induces the structure of an associative algebra on
the Grothendieck group $K_0(\Hall^{\otimes}(\C))$. Using the interpretation via polynomial functors
we can canonically identify $K_0(\Hall^{\otimes}(\C))$ with the algebra $\Lambda$ of symmetric functions. 
Under this identification, the basis given by isomorphism classes of irreducible representations
gets identified with the basis of $\Lambda$ given by the Schur functions. Therefore, we obtain yet
another Hall algebraic construction of the algebra of symmetric functions which naturally exhibits an
interesting basis. For a detailed exposition of this theory (due to Schur) we refer the reader to
\cite{macdonald}.

\subsubsection{$\Hall^{\otimes}(\Vectq)$}

We consider the skeleton $\C \subset \Vectq$ consisting of the standard objects
$\Fq^n$ so that we have
\[
	\S_1(\C) \simeq \coprod_{n\ge 0} B \GL_n(\Fq).
\]
An object of $\Hall^{\otimes}(\C)$ is therefore given by a sequence $(\rho_n)_{n
\ge 0}$ of representations of $\GL_n(\Fq)$ in $\Vect_{\CC}$ with only finitely many
nonzero components. 
The tensor product $\rho_n \otimes \rho_m$ is obtained by pull-push along the span of groupoids
\[
	\S_1 \times \S_1 \longleftarrow \S_2 \longrightarrow \S_1
\]
which factors through the pull-push along the span
\[
	B \GL_n(\Fq) \times B \GL_m(\Fq) \overset{\cong}{\longleftarrow} B P_{n,m}(\Fq) \longrightarrow B
	\GL_{n+m}(\Fq)
\]
where $P_{n,m}$ is the parabolic subgroup of $\GL_{n+m}$ given by the automorphism group of a fixed
short exact sequence
\[
	\Fq^n \hookrightarrow \Fq^{n+m} \twoheadrightarrow \Fq^m.
\]
Therefore, the tensor product $\rho_n \otimes \rho_m$ in $\Hall^{\otimes}(\C)$ is given by first
pulling back the representation of $\GL_n(\Fq) \times \GL_m(\Fq)$ along $P_{n,m} \to \GL_n(\Fq)
\times \GL_m(\Fq)$ and then forming the induced representation along $P_{n,m} \subset
\GL_{n+m}(\Fq)$.

Green \cite{green-gln, macdonald} has developed a $q$-analog of Schur's theory which uses the associative 
algebra given by the Grothendieck group of $\Hall^{\otimes}(\Vectq)$ to construct all irreducible 
characters of the groups $\GL_n(\FF_q)$. 
The monoidal category $\Hall^{\otimes}(\Vectq)$ itself features in the work of Joyal-Street
\cite{joyal-street} who explain that the commutativity of Green's algebra comes from a 
(partial) braided structure.

\newpage
\section{Derived Hall algebras via $\infty$-groupoids}
\label{section:derived}

The idea of constructing the Hall algebra via the simplicial groupoid of flags $\S_{\bullet}$ is a
very flexible one. We explain how it can be adopted to construct Hall algebras of derived
categories or, more generally, stable $\infty$-categories. This section is a translation of
\cite{toen} (also cf. \cite{bergner}) into the language of $\infty$-categories which makes the
analogy to proto-abelian categories immediate. We use \cite{lurie.htt} as a standard reference.

\subsection{Coherent diagrams in differential graded categories}

An {\em $\infty$-category} $\C$ is a simplicial set such
that, for every $0 < i < n$ and every $\Lambda^n_i \to \C$, there exists a commutative diagram
\[
	\xymatrix{
		\Lambda^n_i \ar[r]\ar[d] & \C\\
		\Delta^n \ar@{-->}[ur] & . }
\]
The arrow $\Lambda^n_i \to \C$ represents the boundary of an
$n$-simplex with $i$th face removed, called an {\em inner horn} in $\C$, 
and the condition asks that it can be filled to a full
$n$-simplex in $\C$. 

\begin{exa} The nerve of a small category provides an example of an
$\infty$-category where every inner horn has a {\em unique} filling. This corresponds to the fact
that every $n$-tupel of composable morphisms has a unique composite. In a general $\infty$-category
the composite is not required to be unique. However, the totality of all horn filling conditions
encodes that it is unique up to a coherent system of homotopies.
\end{exa}

\begin{exa} Given $\infty$-categories $\C$, $\D$, we define the simplicial set $\Fun(\C, \D)$
	of {\em functors from $\C$ to $\D$} given by the internal hom in the category of simplicial
	sets. Then $\Fun(\C,\D)$ is an $\infty$-category.
\end{exa}

\begin{exa} A {\em differential graded (dg) category} $T$ is a category enriched over the monoidal category of
	complexes of abelian groups. The collection of dg categories organizes into a category
	$\dgcat$ with morphisms given by enriched functors. Following \cite{lurie.algebra}, we associate to $T$ an
	$\infty$-category called the dg nerve of $T$.
	
	We associate to the $n$-simplex $\Delta^n$ a dg category $\dg(\Delta^n)$ with objects given by
	the set $\{0,1,\dots,n\}$. The graded $\ZZ$-linear category underlying $\dg(\Delta^n)$ is
	freely generated by the morphisms
	\[
		f_I \in \dg(\Delta^n)(i_-, i_+)^{-m}
	\]
	where $I$ runs over the subsets $\{i_- < i_m < i_{m-1} < \dots < i_1 < i_+ \} \subset
	\{0,1,\dots,n\}$, $m \ge 0$. On these generators, the differential is given by the formula
	\[
		d f_I = \sum_{1 \le j \le m} (-1)^j 
		(f_{I \setminus \{i_j\}}  - 
		f_{\{i_j < \dots < i_m < i_+\}} \circ f_{\{i_- < i_1 < \dots < i_j\}})
	\]
	and extended to all morphisms by the $\ZZ$-linear Leibniz rule.
	We have $d^2 = 0$ on generators and therefore on all morphisms. 
	The dg categories $\dg(\Delta^n)$, $n \ge 0$, assemble to form a cosimplicial object in
	$\dgcat$ which allows us to define the dg nerve of $T$ 
	\[
		\Ndg(T) = \Hom_{\dgcat}(\dg(\Delta^{\bullet}), T).
	\]
	It is shown in \cite[1.3.1.10]{lurie.algebra} that $\Ndg(T)$ is in fact an $\infty$-category.
\end{exa}

It is instructive to analyze the low-dimensional simplices of the dg nerve $\Ndg(T)$:
\begin{itemize}
	\item The $0$-simplices are the objects of $T$.
	\item A $1$-simplex in $\Ndg(T)$ is a morphism $f_{\{0,1\}}: a_0 \to a_1$ of degree $0$ which is closed,
		i.e., $df = 0$.
	\item A $2$-simplex in $\Ndg(T)$ is given by objects $a_0,a_1,a_2$, closed morphisms
		$f_{\{0,1\}}: a_0 \to
		a_1$, $f_{\{1,2\}}: a_1 \to a_2$, $f_{\{0,2\}}: a_0 \to a_2$, and a morphism
		$f_{\{0,1,2\}}: a_0 \to a_2$ of degree $-1$
		which satisfies
		\[
			d f_{\{0,1,2\}}= f_{\{0,2\}} - f_{\{1,2\}} \circ f_{\{0,1\}}
		\]
		so that we obtain a triangle in $T$ which commutes up to the chosen homotopy
		$f_{\{0,1,2\}}$. A key point here is that we do not simply require the triangle to commute up to
		homotopy, but the homotopy is part of the data forming the triangle.
	\item A $3$-simplex in $\Ndg(T)$ involves the data of the four
		boundary $2$-simplices as above and, in addition, a morphism $f_{\{0,1,2,3\}}: a_0
		\to a_3$ of degree $-2$ such that
		\[
			d f_{\{0,1,2,3\}}= f_{\{0,1,3\}} - f_{\{2,3\}} \circ f_{\{0,1,2\}}  -
			f_{\{0,2,3\}}  + f_{\{1,2,3\}} \circ f_{\{0,1\}} .
		\]
		We can interpret this data as follows: The boundary of a $3$-simplex in $\Ndg(T)$
		encodes two homotopies between $f_{\{0,3\}}$ and the composite $f_{\{2,3\}} \circ
		f_{\{1,2\}} \circ f_{\{0,1\}}$ given by $f_{\{0,1,3\}} + f_{\{1,2,3\}} \circ f_{\{0,1\}}$ 
		and $f_{\{0,2,3\}} + f_{\{2,3\}} \circ f_{\{0,1,2\}}$, respectively. To
		obtain a full $3$-simplex in $\Ndg(T)$ we have to provide the homotopy
		$f_{\{0,1,2,3\}}$ between these homotopies.
	\item \dots
\end{itemize}

The passage from a dg category $T$ to the $\infty$-category $\Ndg(T)$ allows us (and forces us) to
systematically consider diagrams in $T$ which commute up to specified coherent homotopy: Let $I$ be a
category and $\N(I)$ its nerve. We define a {\em coherent $I$-diagram in $T$} to be a functor, i.e.,
a map of simplicial sets 
\[
	\N(I) \to \Ndg(T).
\]

\begin{exa} Consider the category $I$ given by the universal commutative square: $I$ has four
	objects $1,2,3,4$, morphisms $f_1: 1 \to 2$, $f_2: 2 \to 4$, $f_3: 1 \to 3$, $f_4: 3 \to 4$
	subject to the relation $f_2 \circ f_1 = f_4 \circ f_3$. An $I$-coherent diagram in
	$T$ consists of
	\[
		\xymatrix{
			a_1 \ar[r]^{f_1}\ar[d]_{f_3}\ar[dr]|-g_{h_2}^{h_1} & a_2\ar[d]^{f_2}\\
			a_3 \ar[r]_{f_4} & a_4
		}
	\]
	where the morphisms $f_1,f_2,f_3,f_4$ and $g$ are closed of degree $0$, and we have $d h_1 =
	g - f_2 \circ f_1$, $d h_2 = g - f_4 \circ f_3$.
\end{exa}

One of the main advantage of homotopy coherent diagrams over homotopy commutative ones is the
existence of a good theory of limits. We give an example.

\begin{exa} Let $\A$ be an abelian category with enough projectives and consider the dg category
	$\Ch^-(\A_{\proj})$ of bounded-above cochain complexes of projective objects in $\A$. We
	define the {\em bounded-above derived $\infty$-category of $\A$} as the dg nerve $\D^-(\A) :=
	\Ndg(\Ch^-(\A_{\proj})$. The ordinary bounded-above derived category is obtained by passing
	to the homotopy category $\h(\D^-(\A))$ which is defined as the ordinary category obtained
	by identifying homotopic morphisms. Consider an edge $f: X \to Y$ in $\D^-(\A)$, i.e., a morphism
	between bounded-above complexes of projectives $X$ and $Y$. Conside the cone of $f$, i.e.,
	the complex with
	\[
		\cone(f)_n = X^{n+1} \oplus Y^n
	\]
	and differential given by 
	\[
		d = \left( \begin{array}{rr} -d_X & 0 \\ f & d_Y \end{array} \right).
	\]
	We obtain a coherent square 
	\begin{equation}\label{eq:cone}
		\xymatrix{
				X \ar[r]^{f}\ar[d] \ar[dr]|0^{h}_{0} & Y\ar[d]^{i}\\
				0 \ar[r] & \cone(f)
			}
	\end{equation}
	in $\D^-(\A)$ where 
	\[
		i: Y \to \cone(f), y \mapsto (0,y)
	\]
	and
	\[
		h: X \to \cone(f), x \mapsto (-x,0)
	\]
	so that we have $dh = 0 - i \circ f$. The key fact is that the diagram \eqref{eq:cone} is a
	pushout diagram in the $\infty$-category $\D^-(\A)$ so that the cone is characterized by a
	universal property. This statement becomes wrong if we pass from $\D^-(\A)$ to the {\em homotopy
	category} $\h(\D^-{\A})$: the image of the square \eqref{eq:cone} commutes up to unspecified
	homotopy, but this data is, in general, insufficient to characterize $\cone(f)$ by a universal 
	property. As an extreme case, consider the cone of the zero morphism $X \to 0$ which is the
	translation $X[1]$. The coherent square 
	\[
		\xymatrix{
				X \ar[r]^{0}\ar[d] \ar[dr]|0^{h}_{0} & 0\ar[d]^{i}\\
				0 \ar[r] & X[1]
			}
	\]
	involves the map
	\[
		h: X \to X[1], x \mapsto x
	\]
	considered as a self-homotopy of $0$. For precise definitions and proofs, 
	we refer the reader to \cite[1.3.2]{lurie.algebra}.
\end{exa}

\subsection{Stable $\infty$-categories}

We take for granted the existence of a theory of limits for $\infty$-categories (cf.
\cite{lurie.htt}). 

\begin{defi}\label{defi:stable} An $\infty$-category $\C$ is called {\em stable} if the following conditions hold:
	\begin{enumerate}
		\item The $\infty$-category $\C$ is pointed. 
		\item 
	\begin{enumerate}
		\item Every diagram in $\C$ of the form
		\[
			\xymatrix{
				A \ar[r] \ar[d] & B\\
				C  & 
			}
		\]
		can be completed to a pushout square of the form
		\[
			\xymatrix{
				A \ar[r] \ar[d] & B\ar[d] \\
				C \ar[r] &  D.
			}
		\]
		\item Every diagram in $\C$ of the form
		\[
			\xymatrix{
					& B \ar[d]\\
				C \ar[r]  & D
			}
		\]
		can be completed to a pullback square of the form
		\[
			\xymatrix{
				A \ar[r] \ar[d] & B \ar[d] \\
				C \ar[r] &  D.
			}
		\]
		\end{enumerate}
		\item A square in $\C$ of the form
		\[
			\xymatrix{
				A \ar[r] \ar[d] & B\ar[d] \\
				C \ar[r] &  D
			}
		\]
		is a pushout square if and only if it is a pullback square.
\end{enumerate}
\end{defi}

\begin{rem} Note that these axioms correspond precisely to the axioms defining a proto-abelian category, except
that we drop the conditions on horizontal (resp. vertical) morphisms to be monic (resp. epic). 
\end{rem}

\begin{exa} The bounded-above derived $\infty$-category $\D^-(\A)$ of an abelian category with
	enough projectives is stable.
\end{exa}

\begin{rem} It can be shown (cf. \cite{lurie.algebra}) that the homotopy category of a stable
	$\infty$-category $\C$ has a canonical triangulated structure. The stable $\infty$-category
	$\C$ can be regarded as an {\em enhancement} of $\h \C$ with better properties such as the
	existence of functorial cones.
\end{rem}

To generalize the simplicial groupoid of flags $\S_{\bullet}$ to stable $\infty$-categories we need
to understand what the $\infty$-categorical analog of a groupoid is:
An {\em $\infty$-groupoid} is an $\infty$-category $\C$ whose homotopy category is a groupoid.  

\begin{exa}\label{exa:infty-groupoids}
	\begin{enumerate}
		\item The nerve of a groupoid is an $\infty$-groupoid.
		\item Given an $\infty$-category $\C$, let $\C^{\simeq}$ denote the simplicial
			subset of $\C$ consisting of only those simplices whose edges become
			isomorphisms in the homotopy category $\h \C$. Then $\C^{\simeq}$ is an
			$\infty$-groupoid called the {\em maximal $\infty$-groupoid in $\C$}.
		\item Let $X$ be a topological space. We define the singular simplicial set
			$\Sing(X)$ with $n$-simplices
			\[
				\Sing(X)_n := \Hom_{\Top}(|\Delta^n|, X)
			\]
			given by continuous morphisms from a geometric $n$-simplex to $X$. Then
			$\Sing(X)$ is an $\infty$-groupoid. The functor $\Sing$ from topological
			spaces to simplicial sets has a left adjoint given by the geometric
			realization $|K|$ of a simplicial set. The pair of functors $(|-|,\Sing)$
			defines a Quillen equivalence between suitably defined model categories of
			topological spaces and $\infty$-groupoids so that, from the point of view of
			homotopy theory, the two concepts are equivalent (see, e.g., \cite{lurie.htt}).
	\end{enumerate}
\end{exa}

\subsection{The $S$-construction and the derived Hall algebra}
With these concepts at hand, we can adapt the theory of Section \ref{section:groupoids} from
proto-abelian categories to stable $\infty$-categories. We define the analog of the simplicial groupoid of
flags $\S_{\bullet}$ for a stable $\infty$-category $\C$ as follows: Let $n \ge 0$. We introduce the
category $T^n = \Fun([1],[n])$ where we interpret the linearly ordered sets $[1]$ and $[n]$ as
categories. A functor
\[
	\N(T^n) \to \C
\]
is simply a coherent version of a triangular diagram of shape \eqref{eq:sdiag}.
We define 
\[
	\S_n \subset \Fun(\N(T^n), \C)^{\simeq}
\]
to be the $\infty$-groupoid of coherent diagrams so that
\begin{enumerate}
	\item the diagonal objects are $0$,
	\item all squares are pushout squares and hence, by stability, also pullback squares.
\end{enumerate}
Using Example \ref{exa:infty-groupoids}(3), we typically interpret the resulting simplicial
$\infty$-groupoid $\S_{\bullet}$ as a simplicial space.

\begin{rem} In other words, besides the idea to use coherent diagrams, the only substantial
	modification in comparison to the case when $\C$ is proto-abelian is to
	allow arbitrary chains of morphisms as opposed to flags given by chains of
	monomorphisms. 
\end{rem}

\begin{theo}[\cite{dk-segal}]\label{theo:2seg} Let $\C$ be a stable $\infty$-category. 
	The simplicial space $\S_{\bullet}(\C)$ is $2$-Segal.
\end{theo}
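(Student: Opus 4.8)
The plan is to verify the two families of conditions from Definition \ref{defi:2segal} --- now interpreted for simplicial spaces, so that each occurrence of ``$2$-pullback square'' means \emph{homotopy} pullback --- by recognizing every comparison map as a restriction functor between $\infty$-groupoids of coherent diagrams and showing it is an equivalence. Concretely, for a polygonal subdivision along a diagonal $(i,j)$ I must show that
\[
	\S_{\{0,1,\dots,n\}} \lra \S_{\{0,1,\dots,i,j,\dots,n\}} \times_{\S_{\{i,j\}}} \S_{\{i,i+1,\dots,j\}}
\]
is an equivalence of spaces; the unital squares \eqref{eq:unital} are handled by a parallel, and easier, argument.

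First I would fix the description of $\S_n$ as the full sub-$\infty$-groupoid of $\Fun(\N(T^n),\C)^{\simeq}$ carved out by the two conditions that all diagonal objects $A_{k,k}$ vanish and that every elementary square is a pushout (equivalently, by stability, a pullback). Under this description the displayed map is literally restriction of a coherent $T^n$-diagram to the subposet spanned by the two pieces of the subdivision: it forgets exactly those objects $A_{x,y}$ whose index $(x,y)$ crosses the diagonal $(i,j)$. This mirrors the proto-abelian proof given earlier, where the analogous map was identified as a forgetful functor and the missing objects were ``filled back in'' by forming pushouts and pullbacks.

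The key step is to upgrade that filling-in to an equivalence, which I would do through the $\infty$-categorical theory of Kan extensions. The idea is that a diagram on the two glued pieces extends, essentially uniquely, to a diagram in $\S_n$ by reconstructing the omitted crossing objects one at a time as iterated pushouts. By stability each such pushout is simultaneously a pullback, so every newly created square is again bicartesian; checking that this property propagates is an application of the pasting law for pushout squares. Formally one shows that objects of $\S_n$ are precisely the left Kan extensions of their restrictions along the relevant poset inclusion, and that the space of such extensions over a fixed restriction is contractible. This yields the equivalence and hence the homotopy-pullback property of \eqref{eq:segal}; the condition \eqref{eq:unital} follows from the same mechanism applied to the insertion of a pair of identity rows and columns encoded by the degeneracy $\sigma_i$.

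The main obstacle lies in the coherence hidden in ``essentially uniquely'': in the $\infty$-categorical setting the classical diagram chases of the groupoid case --- the snake-lemma verifications that completing a cross to a $3$-by-$3$ square preserves exactness --- must be replaced by the statement that the iterated pushout construction is functorial up to coherent homotopy. I expect the cleanest route is to invoke Lurie's general existence criteria for Kan extensions together with the pasting lemma for pushouts, which jointly guarantee both that the pointwise extension exists (each crossing object being a genuine pushout) and that the restriction functor is a trivial fibration onto the exactness-constrained subgroupoid. Once the order in which the crossing objects are filled is chosen to respect the pasting law, the remaining work is purely the combinatorics of the polygonal subdivision.
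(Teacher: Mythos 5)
Your sketch is correct and follows the standard route: the paper itself defers the proof of Theorem \ref{theo:2seg} to \cite{dk-segal}, but its proof of the proto-abelian analogue in Section \ref{subsec:flags} uses exactly your strategy (identify the $2$-Segal comparison map as a forgetful/restriction functor and fill the crossing objects back in by pushouts and pullbacks), and the $\infty$-categorical upgrade via Lurie's Kan-extension criteria, the trivial-fibration statement for restriction to the subdiagram, and the pasting law for pushouts is precisely how \cite{dk-segal} carries this out for stable $\infty$-categories.
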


\begin{rem} For the simplicial groupoids of flags in proto-abelian categories, the pullback
	conditions on the squares \eqref{eq:segal} and \eqref{eq:unital} have to be interpreted in
	the $2$-category of groupoids: the squares have to be $2$-pullback squares.
	In the context of Theorem \ref{theo:2seg}, the pullback conditions have to be interpreted in 
	the $\infty$-category of $\infty$-groupoids. Using the equivalence between
	$\infty$-groupoids and topological spaces this can be made quite explicit: the squares have
	to be homotopy pullback squares.
\end{rem}

The construction of the Hall algebra of a stable $\infty$-category from $\S_{\bullet}(\C)$ can also
be adapted to our new context: given a topological space $X$, we pass to the vector space $\F(X)$ of
functions $\varphi: X \to \QQ$ which are constant along connected components and only supported on
finitely many connected components. All definitions of Section \ref{subsec:groupfunc} admit natural
generalizations to this context. The central idea is to replace the groupoid cardinality
\[
	|\A| = \sum_{[a] \in \pi_0(\A)} \frac{1}{|\Aut(a)|}
\]
by the homotopy cardinality 
\[
	|X| = \sum_{[x] \in \pi_0(X)}
	\frac{1}{|\pi_1(X,x)|}\frac{|\pi_2(X,x)|}{1}\frac{1}{|\pi_3(X,x)|}\dots
\]
as introduced by Baez-Dolan \cite{baez-dolan}, and $2$-pullbacks by homotopy pullbacks. 
In particular, we obtain natural pushforward and pullback operations for maps $X \to Y$ of
topological spaces satisfying suitable finiteness conditions. 
Assume that the stable $\infty$-category $\C$ is {\em finitary}: for every pair of objects $X,Y$,
the groups $\Hom(X,Y[i])$ of morphisms in the homotopy category are finite and non-zero for only
finitely many $i$. Then we may apply the constructions of Section \ref{subsec:groupfunc}, adapted
to the current situation, to obtain the Hall algebra of $\C$.

\begin{exa} Let $\A$ be a finitary abelian category of finite global dimension with enough
	projective objects. Then the dg nerve $\D^b(\A)$ of the full dg subcategory of $\Ch^-(\A_{\proj})$
	consisting of those complexes with bounded cohomology objects is finitary. 
	In this example, we obtain the derived Hall algebra as defined in \cite{toen}.  
\end{exa}

In complete analogy to \cite{toen}, we obtain an explicit description of the structure constants of
the derived Hall algebra of a stable $\infty$-category $\C$. Given objects $X,Y,Z$, we have 
\[
	g_{X,Y}^Z = \frac{|\Hom(X,Z)_Y|\prod_{i>0} |\Hom(X[i],Z)|^{(-1)^i}}{|\Aut(X)| \prod_{i>0}
	|\Hom(X[i],X)|^{(-1)^i}}
\]
where $\Hom$ denotes the morphisms in the homotopy category of $\h\C$ of $\C$ and $\Hom(X,Z)_Y$
denotes the subset given by those morphisms whose cone is isomorphic to $Y$. Note that the structure
constants only depend on the triangulated category $\h\C$

\newpage

\section{Triangulated surfaces in triangulated categories}
\label{section:triangulated}

The abstract Hall algebra construction of Section \ref{subsec:spans} can be generalized to
simplicial $2$-Segal spaces and allows us to interpret the lowest $2$-Segal conditions involving
$\S_{\le 3}$ as associativity and unitality. There are several ways to express the relevance of the
remaining higher $2$-Segal conditions: One can generalize the construction of the Hall monoidal
category of Section \ref{section:groupoids} to define a monoidal $\infty$-category. Alternatively,
one construct a variant of the abstract Hall algebra as an algebra object in a monoidal
$\infty$-category of spans of spaces (cf. \cite{dk-segal}).

In this section, we provide another construction where the higher $2$-Segal constraints are crucial:
We exhibit a cyclic symmetry on the $\S_{\bullet}$-construction of a pretriangulated differential
$\Zt$-graded category and show how, in combination with the $2$-Segal conditions, this can be
utilized to construct invariants of marked oriented surfaces. More details are contained in
\cite{dk-triangulated}. 

\subsection{State sums in associative algebras} 
\label{subsec:algebras}

We sketch a combinatorial construction of surface invariants which form what is known as a
$2$-dimensional open oriented topological field theory (\cite{lazaroiu, moore}). These can be
abstractly defined as monoidal functors from a certain $2$-dimensional noncompact oriented bordism category
into the category of vector spaces. The central result of the theory classifies such functors: they
correspond to symmetric {\em Frobenius algebras}. While there are much more
elegant and intrinsic constructions of the topological field theory associated to such an algebra,
the one we describe is the closest analog to the generalization given in Section
\ref{subsec:triangulated} below. 

Let ${\mathbf k}$ be a field, and let $A$ be an associative finite dimensional ${\mathbf k}$-algebra with chosen basis
$E = \{e_1, e_2, \dots, e_r\}$. The multiplication law of $A$ is numerically encoded in the
structure constants $\lambda_{ij}^{k} \in {\mathbf k}$ defined via $e_i e_j = \sum_k \lambda_{ij}^{k} e_k$. 
Associativity is then expressed by the equations
\begin{equation}\label{dy-eq:assoc}
	\sum\nolimits_{t} \lambda_{ij}^{t} \lambda_{tk}^l = \lambda_{ijk}^{l} = \sum\nolimits_{t} \lambda_{it}^{l}
	\lambda_{jk}^{t}
\end{equation}
where the generalized structure constants $\{\lambda_{ijk}^l\}$ are given by $e_i e_j e_k = \sum_l \lambda_{ijk}^l e_l$. 
We can think of the numbers $\{\lambda_{ij}^k\}$ and $\{\lambda_{ijk}^l\}$ as numerical invariants
attached to triangles and squares, respectively, where the set of vertices is ordered and the edges
are labeled by $E$ as illustrated in 

{ \centering 
\begin{tikzpicture}[>=latex,scale=1.0, baseline=(current  bounding  box.south)]
\begin{scope}[scale=1.4]
\coordinate (A0) at (0,0);
\coordinate (A1) at (0.5,0.8);
\coordinate (A2) at (1,0);

\path[fill opacity=0.4, fill=blue!50] (A0) -- (A2) -- (A1) -- cycle;

\begin{scope}[decoration={
    markings,
    mark=at position 0.55 with {\arrow{>}}}
    ] 
\draw[postaction={decorate}] (A0) -- (A1);
\draw[postaction={decorate}] (A1) -- (A2);
\draw[postaction={decorate}] (A0) -- (A2);
\end{scope}

{\scriptsize
\draw (0.25,0.5) node[anchor=east] {$i$};
\draw (0.75,0.5) node[anchor=west] {$j$};
\draw (0.5,0) node[anchor=north] {$k$};
\draw (A0) node[anchor=east] {$0$};
\draw (A1) node[anchor=south] {$1$};
\draw (A2) node[anchor=west] {$2$};
}
\draw (1.8,0.5) node {$\mapsto \quad \lambda_{ij}^k$,};
\end{scope}

\begin{scope}[scale=1.4, xshift=4cm]
\coordinate (A0) at (0,0);
\coordinate (A1) at (0,1);
\coordinate (A2) at (1,1);
\coordinate (A3) at (1,0);

\path[fill opacity=0.4, fill=blue!50] (A0) -- (A3) -- (A2) -- (A1) -- cycle;

\begin{scope}[decoration={
    markings,
    mark=at position 0.55 with {\arrow{>}}}
    ] 
\draw[postaction={decorate}] (A0) -- (A1);
\draw[postaction={decorate}] (A1) -- (A2);
\draw[postaction={decorate}] (A2) -- (A3);
\draw[postaction={decorate}] (A0) -- (A3);
\end{scope}

{\scriptsize
\draw (0,0.5) node[anchor=east] {$i$};
\draw (0.5,1) node[anchor=south] {$j$};
\draw (1,0.5) node[anchor=west] {$k$};
\draw (0.5,0) node[anchor=north] {$l$};
\draw (A0) node[anchor=east] {$0$};
\draw (A1) node[anchor=east] {$1$};
\draw (A2) node[anchor=west] {$2$};
\draw (A3) node[anchor=west] {$3$};
}
\draw (2.0,0.5) node {$\mapsto \quad \lambda_{ijk}^l.$};
\end{scope}
\end{tikzpicture}\\
}
\noindent
Equation \eqref{dy-eq:assoc} is then geometrically reflected by the fact that 
$\{\lambda_{ijk}^l\}$ can be computed in terms of $\{\lambda_{ij}^k\}$ via two different formulas
corresponding to the two possible triangulations of the square.
Similarly, this observation extends to yield numerical invariants of planar convex polygons with ordered
vertices and $E$-labeled edges which can be computed in terms of $\{\lambda_{ij}^k\}$ via any chosen triangulation.

Assume now that $A$ carries a Frobenius structure: a $\k$-linear map ${\operatorname{tr}}: A
\to {\mathbf k}$, called {\em trace}, such that
\begin{enumerate}
	\item $\tr$ is {\em non-degenerate}: the association $a \mapsto \tr(a - )$ defines an
		isomorphism $A \to A^*$,
	\item $\tr$ is {\em symmetric}: for every $a,b \in A$, we have ${\operatorname{tr}}(ab) = {\operatorname{tr}}(ba)$. 
\end{enumerate}
Then we can introduce a dual basis $E^* = \{e_1^*, e_2^*, \ldots, e_r^*\}$ of $A$ which is defined
by the requirement
\[
	\tr(e_i e_j^*) = \begin{cases} 1 & \text{if $i = j$,}\\
		0 & \text{else.} \end{cases}
\]
This allows us to enlarge the range of definition of the above system of invariants 
to include planar polygons with oriented $E$-labeled edges such as
\begin{equation*}
\begin{tikzpicture}[>=latex,scale=1.0, baseline=(current  bounding  box.center)]
\begin{scope}[scale=0.2]
\coordinate (A0) at (1,4);
\coordinate (A1) at (5,0);
\coordinate (A2) at (3,-4);
\coordinate (A3) at (-1,-4);
\coordinate (A4) at (-3,0);

\path[fill opacity=0.4, fill=blue!50] (A0) -- (A1) -- (A2) -- (A3) -- (A4) -- cycle;

\begin{scope}[decoration={
    markings,
    mark=at position 0.55 with {\arrow{>}}}
    ] 
\draw[postaction={decorate}] (A1) -- (A0);
\draw[postaction={decorate}] (A1) -- (A2);
\draw[postaction={decorate}] (A3) -- (A2);
\draw[postaction={decorate}] (A3) -- (A4);
\draw[postaction={decorate}] (A4) -- (A0);
\end{scope}

{\scriptsize
\draw (3.5,2) node[anchor=south] {$i$};
\draw (4.5,-2.5) node[anchor=west] {$j$};
\draw (1,-4) node[anchor=north] {$k$};
\draw (-2.5,-2.5) node[anchor=east] {$l$};
\draw (-2,2) node[anchor=south] {$m$};
}
\draw (1,0) node {$\circlearrowright$};
\draw (15.0,0.5) node {$\mapsto \quad {\operatorname{tr}}(e_me_i^*e_je_k^*e_l)$};
\end{scope}

\end{tikzpicture}
\end{equation*}
where, due to the cyclic invariance of the trace expression, no linear ordering of the vertices is
needed as long as we remember the orientation of the polygon. Again, these invariants can be
computed by choosing any triangulation involving the vertices of the polygon. 

With this new flexibility at hand, we can enlarge the range of definition of our invariants quite
drastically: Let $S$ be a compact oriented surface, possibly with boundary, equipped with a finite
nonempty set $M$ of marked points where we assume that there is at least one marked point on every
boundary component. Unless the pair $(S,M)$ is one of the {\em unstable} cases
\begin{enumerate}
	\item $S$ is a sphere and $|M| < 3$,
	\item $S$ is a boundary marked disk with $|M| < 3$ 
\end{enumerate}
it is always possible to find a triangulation of the surface $S$ with $M$ as vertices. Further, it
is known that any two triangulations of $(S,M)$ are related by a sequence of {\em Pachner moves}
given by replacing the diagonal of a chosen local square by the opposite diagonal. 

Let $(S,M)$ be a stable compact oriented marked surface. Fix an orientation and an $E$-labelling of all
boundary arcs between the boundary marked points. We associate a number to this datum as follows:
\begin{enumerate}
	\item Choose a triangulation $\Delta(S,M)$ of $(S,M)$ and an orientation of all internal edges.  
	\item Given an $E$-labelling $L$ of all internal edges, we define $\lambda_l$ to be
		the product of the numerical invariants of all $E$-labelled triangles
		involved in the triangulation.
	\item The invariant is given by the {\em state sum}
		\begin{equation}\label{eq:statesum}
				\sum_l \lambda_l
		\end{equation}
		where $l$ ranges over all possible $E$-labellings of the internal edges of
		$\Delta(S,M)$.
\end{enumerate}
From the above discussion it is straightforward to show that the resulting number is independent of
the auxiliary data chosen to compute it -- it only depends on $(S,M)$ as well as the orientation and
$E$-labelling of the boundary arcs. Varying over all possible $E$-labels of the boundary arcs, these
numbers form the matrix entries of a $\k$-linear map 
\[
	A^{\otimes^i} \lra A^{\otimes^o}
\]
where $i$ denotes the number of boundary arcs whose orientation is compatible with the surface
orientation, and $o$ denotes the number of remaining boundary arcs. The resulting $\k$-linear maps
assemble to provide a functor from a suitably defined $2$-dimensional bordism category into the
category of vector spaces -- the topological field theory corresponding to $A$.

\subsection{State sums in stable $\infty$-categories} 
\label{subsec:triangulated}

The goal of this section is to show that certain symmetries present in $2$-periodic pretriangulated
differential graded categories can be exploited to define invariants of oriented surfaces via a 
formalism similar to the one explained in Section \ref{subsec:algebras}. The conceptual explanation for
why this is possible is that, in a certain sense, the abstract Hall algebra associated to a
$2$-periodic pretriangulated differential graded category is a Frobenius algebra.

We begin with a heuristic version of the construction in the context of triangulated categories
which is completely parallel to the discussion in Section \ref{subsec:algebras}:
Let ${\mathcal T}$ be a triangulated category with set of
objects $E = \{A, B, \dots, A', B', \dots \}$. We associate to an $E$-labeled triangle the collection 
\begin{equation*}
\begin{tikzpicture}[>=latex, scale=1.0, baseline=(current  bounding  box.center)]
\begin{scope}[scale=1.4]
\coordinate (A0) at (0,0);
\coordinate (A1) at (0.5,0.8);
\coordinate (A2) at (1,0);

\path[fill opacity=0.4, fill=blue!50] (A0) -- (A2) -- (A1) -- cycle;

\begin{scope}[decoration={
    markings,
    mark=at position 0.55 with {\arrow{>}}}
    ] 
\draw[postaction={decorate}] (A0) -- (A1);
\draw[postaction={decorate}] (A1) -- (A2);
\draw[postaction={decorate}] (A0) -- (A2);
\end{scope}

{\scriptsize
\draw (0.25,0.5) node[anchor=east] {$A$};
\draw (0.75,0.5) node[anchor=west] {$A'$};
\draw (0.5,0) node[anchor=north] {$B$};
\draw (A0) node[anchor=east] {$0$};
\draw (A1) node[anchor=south] {$1$};
\draw (A2) node[anchor=west] {$2$};
}
\draw (2,0.5) node {$\mapsto \quad$};
\end{scope}
\begin{scope}[scale=1.5, xshift=2.7cm, yshift=0.2cm]
	\small
	\node (LB) at (-0.3,0.25){$\Big\{$};
	\node (A0) at (0,0.5){$A$};
	\node (A1) at (1,0.5){$A'$};
	\node (B) at (0.5,-0.2){$B$};
	\node (RB) at (1.3,0.25){$\Big\}$};

\draw[->] (A0) -- (B);
\draw[->] (B) -- (A1);
\draw[->] (A1) -- (A0);

{\tiny
	\draw (0.5,0.5) node[anchor=south] {$+1$};
}
\end{scope}
\end{tikzpicture}
\end{equation*}
of all distinguished triangles in ${\mathcal T}$ involving the objects determined by the edge labels.
To a triangulated square with $E$-labeled edges, we attach the following collections of diagrams
\begin{equation}\label{eq:tft2seg}
\begin{tikzpicture}[>=latex,scale=1.0, baseline=(current  bounding  box.center)]
	\begin{scope}[xshift=-3.2cm, scale=0.95]
	\scriptsize
\begin{scope}[scale=1.4, xshift=4cm]
\coordinate (A0) at (0,0);
\coordinate (A1) at (0,1);
\coordinate (A2) at (1,1);
\coordinate (A3) at (1,0);

\path[fill opacity=0.4, fill=blue!50] (A0) -- (A3) -- (A2) -- (A1) -- cycle;

\begin{scope}[decoration={
    markings,
    mark=at position 0.55 with {\arrow{>}}}
    ] 
\draw[postaction={decorate}] (A0) -- (A1);
\draw[postaction={decorate}] (A1) -- (A2);
\draw[postaction={decorate}] (A2) -- (A3);
\draw[postaction={decorate}] (A0) -- (A2);
\draw[postaction={decorate}] (A0) -- (A3);
\end{scope}

{\scriptsize
\draw (0,0.5) node[anchor=east] {$A$};
\draw (0.5,1) node[anchor=south] {$A'$};
\draw (1,0.5) node[anchor=west] {$A''$};
\draw (0.5,0) node[anchor=north] {$C$};
\draw (A0) node[anchor=east] {$0$};
\draw (A1) node[anchor=east] {$1$};
\draw (A2) node[anchor=west] {$2$};
\draw (A3) node[anchor=west] {$3$};
}
\draw (1.8,0.5) node {\normalsize $\mapsto \quad$};
\end{scope}
\begin{scope}[scale=2.0, xshift=4.4cm, yshift=-0.15cm]
	\small
	\node (LB) at (-0.2,0.5){$\Big\{$};
	\node (A0) at (0,0.5){$A$};
	\node (A1) at (0.5,1){$A'$};
	\node (A2) at (1,0.5){$A''$};
	\node (A3) at (0.5,0){$C$};
	\node (Am) at (0.5,0.5){$B$};
	\node (st1) at (0.35,0.65){$\ast$};
	\node (st2) at (0.65,0.35){$\ast$};
	\node (RB) at (1.2,0.5){$\Big\}$};
	\node (comma) at (1.3,0.4){\normalsize ,};

\draw[->] (A0) -- (Am);
\draw[->] (Am) -- (A1);
\draw[->] (A1) -- (A0);
\draw[->] (A2) -- (Am);
\draw[->] (A2) -- (A1);
\draw[->] (A3) -- (A2);
\draw[->] (A0) -- (A3);
\draw[->] (Am) -- (A3);

{\tiny
	\draw (0.15,0.85) node {$+1$};
	\draw (0.85,0.85) node {$+1$};
\draw (0.7,0.5) node[anchor=south] {$+1$};
}
\end{scope}
\end{scope}
\begin{scope}[xshift=3.2cm, scale=0.95]
	\scriptsize
\begin{scope}[scale=1.4, xshift=4cm]
\coordinate (A0) at (0,0);
\coordinate (A1) at (0,1);
\coordinate (A2) at (1,1);
\coordinate (A3) at (1,0);

\path[fill opacity=0.4, fill=blue!50] (A0) -- (A3) -- (A2) -- (A1) -- cycle;

\begin{scope}[decoration={
    markings,
    mark=at position 0.55 with {\arrow{>}}}
    ] 
\draw[postaction={decorate}] (A0) -- (A1);
\draw[postaction={decorate}] (A1) -- (A2);
\draw[postaction={decorate}] (A2) -- (A3);
\draw[postaction={decorate}] (A1) -- (A3);
\draw[postaction={decorate}] (A0) -- (A3);
\end{scope}

\draw (0,0.5) node[anchor=east] {$A$};
\draw (0.5,1) node[anchor=south] {$A'$};
\draw (1,0.5) node[anchor=west] {$A''$};
\draw (0.5,0) node[anchor=north] {$C$};
\draw (A0) node[anchor=east] {$0$};
\draw (A1) node[anchor=east] {$1$};
\draw (A2) node[anchor=west] {$2$};
\draw (A3) node[anchor=west] {$3$};

\draw (1.8,0.5) node {\normalsize $\mapsto \quad$};
\end{scope}
\begin{scope}[scale=2.0, xshift=4.4cm, yshift=-0.15cm]
	\small
	\node (LB) at (-0.2,0.5){$\Big\{$};
	\node (A0) at (0,0.5){$A$};
	\node (A1) at (0.5,1){$A'$};
	\node (A2) at (1,0.5){$A''$};
	\node (A3) at (0.5,0){$C$};
	\node (Am) at (0.5,0.5){$B'$};
	\node (st1) at (0.65,0.65){$\ast$};
	\node (st2) at (0.35,0.35){$\ast$};
	\node (RB) at (1.2,0.5){$\Big\}$};

\draw[->] (A1) -- (Am);
\draw[->] (Am) -- (A2);
\draw[->] (A2) -- (A1);
\draw[->] (A0) -- (A3);
\draw[->] (A3) -- (Am);
\draw[->] (Am) -- (A0);
\draw[->] (A1) -- (A0);
\draw[->] (A3) -- (A2);

{ \tiny
	\draw (0.15,0.85) node {$+1$};
	\draw (0.85,0.85) node {$+1$};
	\draw (0.3,0.5) node[anchor=south] {$+1$};
}

\end{scope}
\end{scope}
\end{tikzpicture}
\end{equation}
where the $\ast$-marked triangles are distinguished, the unmarked triangles commute, and the objects
$B$ and $B'$ are allowed to vary.
The two types of diagrams correspond to the upper and lower cap of an octahedron. On a
heuristic level, the role of the associativity in Equation \eqref{dy-eq:assoc} will now be played by the {\em octahedral axiom}
which allows us to pass from one triangulation of the square to the other.
More generally, to an $E$-labeled polygon with a chosen triangulation we associate the
collection of certain {\em Postnikov systems} \cite{gelfand-manin} such as
\begin{equation}\label{eq:postnikov}
\begin{tikzpicture}[>=latex,scale=1.0, baseline=(current  bounding  box.center)]
\begin{scope}[scale=0.2]
\coordinate (A2) at (1,4);
\coordinate (A3) at (5,0);
\coordinate (A4) at (3,-4);
\coordinate (A0) at (-1,-4);
\coordinate (A1) at (-3,0);

\path[fill opacity=0.4, fill=blue!50] (A0) -- (A1) -- (A2) -- (A3) -- (A4) -- cycle;

\begin{scope}[decoration={
    markings,
    mark=at position 0.55 with {\arrow{>}}}
    ] 
\draw[postaction={decorate}] (A0) -- (A1);
\draw[postaction={decorate}] (A0) -- (A2);
\draw[postaction={decorate}] (A0) -- (A3);
\draw[postaction={decorate}] (A1) -- (A2);
\draw[postaction={decorate}] (A2) -- (A3);
\draw[postaction={decorate}] (A3) -- (A4);
\draw[postaction={decorate}] (A0) -- (A4);
\end{scope}

{\scriptsize
\draw (4,2) node[anchor=south] {$A''$};
\draw (4,-2.5) node[anchor=west] {$A'''$};
\draw (1,-4) node[anchor=north] {$D$};
\draw (-2.5,-2.5) node[anchor=east] {$A$};
\draw (-2,2) node[anchor=south] {$A'$};
\draw (A0) node[anchor=east] {$0$};
\draw (A1) node[anchor=east] {$1$};
\draw (A2) node[anchor=south] {$2$};
\draw (A3) node[anchor=west] {$3$};
\draw (A4) node[anchor=west] {$4$};
}
\draw (12.0,-0.5) node {$\mapsto \quad$};
\end{scope}
\begin{scope}[scale=1.8, xshift=2cm, yshift=0.2cm,xscale=0.8]
	\small
	\node (LB) at (-0.3,-0.25){$\Big\{$};
	\node (A) at (0,0){$A$};
	\node (A1) at (0.5,-0.5){$A'$};
	\node (B) at (1,0){$B$};
	\node (A2) at (1.5,-0.5){$A''$};
	\node (C) at (2,0){$C$};
	\node (A3) at (2.5,-0.5){$A'''$};
	\node (D) at (3,0){$D$};
	\node (st1) at (0.5,-0.2){$\ast$};
	\node (st2) at (1.5,-0.2){$\ast$};
	\node (st3) at (2.5,-0.2){$\ast$};
	\node (RB) at (3.3,-0.25){$\Big\}.$};

\draw[->] (A) -- (B);
\draw[->] (B) -- (A1);
\draw[->] (A1) -- (A);
\draw[->] (B) -- (C);
\draw[->] (C) -- (A2);
\draw[->] (A2) -- (B);
\draw[->] (C) -- (D);
\draw[->] (D) -- (A3);
\draw[->] (A3) -- (C);
\draw[->] (A2) -- (A1);
\draw[->] (A3) -- (A2);

{\tiny
	\draw (0.1,-0.2) node[anchor=north] {$+1$};
	\draw (1.1,-0.2) node[anchor=north] {$+1$};
	\draw (2.1,-0.2) node[anchor=north] {$+1$};
	\draw (1.0,-0.5) node[anchor=north] {$+1$};
	\draw (2.0,-0.5) node[anchor=north] {$+1$};
}
\end{scope}
\end{tikzpicture}
\end{equation}
The analog of the Frobenius structure in Section \ref{subsec:algebras} turns out to be a $2$-periodic
structure on ${\mathcal T}$: an isomorphism of functors $\Sigma^2 \simeq \operatorname{id}$. This structure allows us to rewrite any distinguished triangle as
\begin{equation*}
\begin{tikzpicture}[>=latex, scale=1.0, baseline=(current  bounding  box.center)]
\begin{scope}[scale=1.5, xshift=2.7cm, yshift=0.2cm]
	\scriptsize
	\node (A0) at (0,0.5){$A$};
	\node (A1) at (1,0.5){$A'$};
	\node (B) at (0.5,-0.2){$B$};

\draw[->] (A0) -- (B);
\draw[->] (B) -- (A1);
\draw[->] (A1) -- (A0);

{\tiny
	\draw (0.5,0.5) node[anchor=south] {$+1$};
}
\draw (2,0.2) node {\normalsize $\sim$};

\end{scope}
\begin{scope}[scale=1.5, xshift=5.7cm, yshift=0.2cm]
	\scriptsize
	\node (A0) at (0,0.5){$A$};
	\node (A1) at (1,0.5){$A'$};
	\node (B) at (0.5,-0.2){$\Sigma B$};

\draw[->] (A0) -- (B);
\draw[->] (B) -- (A1);
\draw[->] (A1) -- (A0);

{\tiny
	\draw (0.5,0.5) node[anchor=south] {$+1$};
	\draw (0.9,0.1) node {$+1$};
	\draw (0.1,0.1) node {$+1$};
}
\end{scope}
\end{tikzpicture}
\end{equation*}
where the right-hand form exhibits a cyclic symmetry analogous to the symmetry of the trace
expression ${\operatorname{tr}}(e_i e_j e_k^*)$ from Section \ref{subsec:algebras}. 

These heuristics suggest the existence of invariants of marked oriented surfaces associated with any
$2$-periodic triangulated category ${\mathcal T}$. Further, state sum formulas should lead to a
description of these invariants in terms of {\em surface Postnikov systems}: collections of
distinguished triangles in ${\mathcal T}$ parametrized by a chosen triangulation of the surface.

We have already defined the concepts needed to provide a rigorous version of the above heuristics:
\begin{enumerate}
		\item to address the issues with the octahedral axiom, we should assume that the
			triangulated category comes with an enhancement: it is the homotopy category
			of a differential graded category $T$,
		\item the invariant associated to a polygon $P_n$ is the space $\S_n$
			of $n$-simplices in the $S_{\bullet}$-construction of the dg nerve of $T$,
		\item the $2$-Segal property allows us to identify the space $\S_n$ with the
			space of Postnikov systems corresponding to a chosen triangulation of $P_n$,
\end{enumerate}
The only missing ingredient is an interpretation of $2$-periodicity. One of the main points of this
section is, that it can be captured in terms of Connes' \cite{connes} {\em cyclic category $\Lambda$}. The category $\Lambda$
has an object $\cn$ for every $n \ge 0$. Let $S^1$ denote the unit circle in $\CC$ and $\mu_n
\subset S^1$ the subset of $n$th roots of unity. The morphisms from $\cm$ to $\cn$ are then given by
homotopy classes of monotone degree $1$ maps $\varphi: S^1 \to S^1$ such that $\varphi(\mu_{n+1})
\subset \mu_{m+1}$.  There is a natural embedding $\Delta \subset \Lambda$ so that every morphism in
$\Lambda$ can be uniquely expressed as the composite of a cyclic rotation and a morphism in
$\Delta$.  A {\em cyclic structure} on a simplicial object $X: \Delta \to \C$ is a lift 
\[
	\xymatrix{ \Delta \ar[d] \ar[r] & \C\\
		\Lambda \ar@{-->}[ur]. & }
\]

\begin{theo}[\cite{dk-triangulated}]\label{dy-ka-thm1}
	Let $T$ be a pretriangulated differential $\Zt$-graded category. Denote
	by $\S_{\bullet}(T)$ the simplicial space given by Waldhausen's $\S_{\bullet}$-construction of
	the dg nerve of $T$. Then $\S_{\bullet}(T)$ admits a canonical cyclic structure.
\end{theo}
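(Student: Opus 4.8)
The plan is to exhibit $\S_{\bullet}(T)$ as a lift of the simplicial space along the inclusion $\Delta \subset \Lambda$ by first producing a \emph{paracyclic} structure — a lift along $\Delta \subset \Lambda_{\infty}$, where $\Lambda_{\infty}$ denotes the paracyclic category, i.e. the universal cover of $\Lambda$ whose rotation generators $t_n$ satisfy $t_n^{n+1} = z_n$ for a central element $z_n$ rather than $t_n^{n+1} = \id$ — and then showing that the $\Zt$-grading trivializes the monodromy $z_n$, so that the paracyclic structure descends to a genuine cyclic one. I would use that every morphism in $\Lambda$ factors uniquely as a rotation followed by a morphism of $\Delta$, and that a paracyclic object is cyclic precisely when the central monodromy acts by coherent identities; this reduces the theorem to the two tasks of building the rotation operators and of identifying and killing their monodromy.

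First I would construct the rotation operators $t_n \colon \S_n \to \S_n$. Here I use the description $\S_n \subset \Fun(\N(\Fun([1],[n])), \C)^{\simeq}$ with $\C = \Ndg(T)$, viewing the indexing poset $\Fun([1],[n])$ as the set of intervals $[i,j]$ in $[n]$. Picturing $[n]$ as $n+1$ marked points arranged cyclically on a circle, the rotation $i \mapsto i+1$ carries intervals to intervals, except that the interval crossing the seam between $n$ and $0$ must be cut and reglued, and this reglueing introduces a suspension because passing a subobject across the seam exchanges it for the corresponding quotient shifted by $\Sigma$. Concretely, I would encode this as a co-paracyclic enhancement of the cosimplicial category $[n] \mapsto \Fun([1],[n])$ (equivalently, of the corepresenting co-paracyclic stable $\infty$-category of staircase diagrams), built so that the $\mathbb{Z}/(n+1)$-rotation of the underlying $(n+1)$-gon is realized up to this seam-suspension. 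Applying $\Fun(\N(-),\C)^{\simeq}$ then yields the operators $t_n$, and the verification that they satisfy the paracyclic relations with the faces $\partial_k$ and degeneracies $\sigma_k$ is exactly the statement that rotation permutes the triangulations of the $(n+1)$-gon compatibly with the $2$-Segal reconstruction of Theorem \ref{theo:2seg}; this is where the $2$-Segal property is used essentially, just as in the associativity argument of Theorem \ref{theo:abshall}.

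Next I would identify the central monodromy $t_n^{n+1}$. It is homotopic to the autoequivalence $(\Sigma^2)_{*}$ of $\S_n$ induced by post-composition with the double suspension $\Sigma^2 \colon \C \to \C$; the case $n=1$, where $\S_1 = \C^{\simeq}$ and the cyclic relation in $\Lambda$ forces $t_1^2 = \id$, already pins down that the relevant power is $\Sigma^2$, and centrality of $z_n$ makes the identification uniform in $n$. The hypothesis that $T$ is differential $\Zt$-graded supplies a canonical equivalence $\Sigma^2 \simeq \id$ on $\C = \Ndg(T)$, hence a trivialization $(\Sigma^2)_{*} \simeq \id$ of this monodromy. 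Feeding the trivialization into the paracyclic object makes the central $\mathbb{Z}$ act by the identity, so the paracyclic lift factors through the quotient $\Lambda_{\infty} \to \Lambda$, producing the asserted cyclic structure lifting $\S_{\bullet}(T)$.

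The hard part will be coherence, not any single one of the steps above. Producing $t_n$ for each $n$ and checking $t_n^{n+1} \simeq (\Sigma^2)_{*}$ up to homotopy is comparatively soft; the real work is to organize the $t_n$, the seam-suspensions, and the $\Zt$-periodicity into one functor out of $\Lambda$, i.e. a single coherent datum rather than a compatible family of homotopies. I expect the cleanest way to control this is to package the construction as a co-paracyclic object in stable $\infty$-categories equipped with a coherent $2$-periodicity, and to encode the resulting cyclic object as a (co)cartesian fibration over $\Lambda^{\op}$ — the same fibrational bookkeeping that was invoked to make MacLane's pentagon precise in the proof of the Hall monoidal structure. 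Verifying that the $2$-periodicity equivalence is compatible with all faces, degeneracies, and rotations simultaneously is the crux, and is what genuinely requires the strict differential $\Zt$-graded hypothesis on $T$, as opposed to a mere homotopy-level $2$-periodicity.
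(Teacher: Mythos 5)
Your proposal is correct in outline, but it takes a genuinely different route from the paper's proof — it is much closer to the strategy of \cite{lurie-cyclic} and \cite{nadler}, which the text cites as alternative treatments. You construct the rotation operators $t_n$ directly from the geometry of the $(n+1)$-gon, obtain a paracyclic structure whose monodromy you identify (correctly) with $(\Sigma^2)_*$, and then use the $\Zt$-grading to descend along $\Lambda_{\infty} \to \Lambda$; as you yourself observe, the real work in this approach is the coherence of the family $\{t_n\}$ and of the trivialization of the monodromy. The paper sidesteps that coherence problem entirely by working on the corepresenting side: it first establishes an equivalence $S_n(\Ndg(T)) \simeq \Map(\A^n, T)$ in the Morita localization of $\dgcat^{(2)}$, where $\A^n$ is the linearization of the ordered set $\{1,\dots,n\}$, and then replaces $\A^n$ by the Morita-equivalent full subcategory $\F^n \subset \on{MF}^{\ZZ/(n+1)}(\k[z], z^{n+1})$ of graded matrix factorizations spanned by the objects $[i,j]$. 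On $\F^n$ the cyclic rotation is realized by the strict dg functor $[i,j] \mapsto [i+1,j+1]$ shifting the $\ZZ/(n+1)$-grading, so $\F^{\bullet}$ is an honest cocyclic object in the ordinary category $\dgcat^{(2)}$ and all higher coherences are automatic; the cyclic structure on $\S_{\bullet}(T) \simeq \Map(\F^{\bullet}, T)$ is then inherited by functoriality. Note that in this approach the $2$-periodicity is not a final descent step but is built in from the outset — matrix factorization categories are intrinsically $2$-periodic, and your ``seam-suspension'' is visible there as the identification of $[j,i]$ with the shift of $[i,j]$. Two small corrections to your write-up: the paracyclic exchange relations with $\partial_k$ and $\sigma_k$ follow from functoriality of the (co)representing categories rather than from the $2$-Segal property, which instead governs the reconstruction of $\S_n$ from triangulations as used in Theorem \ref{dy-ka-thm2}; and the $n=1$ case pins down the monodromy because $t_1 \simeq \Sigma_*$ on $\S_1 = \C^{\simeq}$, not because the cyclic relation forces $t_1^2 = \id$ — asserting the latter before the cyclic structure is constructed would be circular.
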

\begin{proof}
	Let $\A^n$ denote the $k$-linear envelope of the linearly ordered set $\{1,2,\dots,n\}$. We
	may consider $\A^n$ as a differential $\Zt$-graded category which is concentrated in even
	degrees. Given a differential $\Zt$-graded category $T$, one can construct an equivalence of
	spaces
	\[
		S_{n}(\Ndg(T)) \overset{\simeq}{\lra} \Map(\A^n, T)
	\]
	where $\Map$ denotes the mapping space of the localization of the category $\dgcat^{(2)}$ of
	differential $\Zt$-graded categories along Morita equivalences.

	The key of the proof is to replace $\A^n$ by a Morita equivalent model in which the cyclic
	functoriality is more apparent: the differential $\Zt$-graded category
	\[
		\operatorname{MF}^{\ZZ/(n+1)}(k[z], z^{n+1})
	\] 
	of $\ZZ/(n+1)$-graded matrix factorizations of the polynomial $z^{n+1}$. More precisely, we focus on the
	full dg subcategory $F^n \subset \operatorname{MF}(k[z], z^{n+1})$ spanned by the objects
	\[
		[i,j] := \xymatrix{
			k[z](i) \ar@/_1ex/[r]_{z^{j-i}} & k[z](j)\ar@/_1ex/[l]_{z^{i-j}}
			 }.
	\]
	Here, $i$ and $j$ range over $\ZZ/(n+1)$ and, in the symbols $z^{i-j}$ and $z^{j-i}$, we
	replace the exponent by its representative in $\{0,1,\dots,n\}$. There is a dg functor
	\[
		\A^n \to \F^n, i \mapsto [0,i]
	\]
	which is a Morita equivalence. The dg categories $\F^{\bullet}$ naturally organize into a
	cocyclic object
	\[
		\F^{\bullet}: \Lambda \lra \dgcat^{(2)}
	\]
	whose underlying cosimplicial object is $2$-Segal (where the pullback conditions become
	pushout conditions) after localizing along Morita equivalences. 
	The cyclic structure on $S_{\bullet}(\Ndg(T))$ is now obtained via
	\[
		S_{\bullet}(\Ndg(T)) \simeq \Map(\F^{\bullet}, T).
	\]
\end{proof}

The following version of a result of \cite{dk-triangulated} shows that the expected surface invariants can
indeed be defined and computed in terms of a limit which should be regarded as the
analog of the state sum \eqref{eq:statesum}.

\begin{theo}\label{dy-ka-thm2}
	Let $\C$ be an $\infty$-category with limits and let $X$ be a $2$-Segal
	object in $\C$ equipped with a cyclic structure. 
	Let $(S,M)$ be a stable marked oriented surface. Then there exists an object
	$X_{(S,M)}$ in $\C$ which, for every triangulation $\Delta(S,M)$ of $(S,M)$,
	comes equipped with canonical isomorphism
	\[
		X_{(S,M)} \overset{\simeq}{\longrightarrow} \mathop{\operatorname{lim}}_{\Lambda^n \to
		\Delta(S,M)} X_n.
	\]
	Further, the mapping class group of $(S,M)$ acts coherently on $X_{(S,M)}$ via equivalences in $\C$.
\end{theo}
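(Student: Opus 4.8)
The plan is to attach an object to each \emph{choice} of triangulation, then show that these objects are canonically identified and assemble into a single coherent object carrying the mapping class group action. Fix a triangulation $\Delta(S,M)$. Its cells (vertices, edges, triangles) form a poset under the face relation, and from it I would build an index category $\I_\Delta$ whose objects record, for each triangle, the cyclic order of its three vertices induced by the orientation of $S$, and for each edge its pair of endpoints. Using the \emph{cyclic} structure on $X$ — the lift to $\Lambda$ supplied by the hypothesis, whose existence in the motivating case is Theorem \ref{dy-ka-thm1} — a triangle maps to the object $\langle 2 \rangle$ of $\Lambda$, hence to $X_2$, an edge to $X_1$, and a vertex to $X_0$, with the face inclusions inducing the structure maps of $X$. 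This produces a functor $F_\Delta\colon \I_\Delta \to \C$, and I set
\[
	X^{\Delta}_{(S,M)} := \lim_{\I_\Delta} F_\Delta = \lim_{\Lambda^n \to \Delta(S,M)} X_n,
\]
the limit existing since $\C$ has limits. Intuitively this is the object of coherent Postnikov systems adapted to $\Delta$: the $2$-Segal condition $X_3 \simeq X_2 \times_{X_1} X_2$ says that gluing two triangles along an edge reassembles to the square, so the limit genuinely integrates the local triangle data into a global invariant. The cyclic structure is exactly what makes the placement of $X_2$ on a triangle (whose vertices carry only a cyclic, not a linear, order) well defined, and what lets adjacent triangles be glued compatibly with the induced orientations on their shared edge.

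Next I would prove independence of the triangulation. Stability of $(S,M)$ guarantees a triangulation with vertex set $M$ exists, and any two are related by a finite sequence of Pachner flips. For a single flip — replacing one diagonal of an interior quadrilateral by the other, as depicted in \eqref{eq:tft2seg} — the diagrams $F_\Delta$ and $F_{\Delta'}$ differ only in the two triangles meeting along the flipped edge and agree everywhere else. The lowest $2$-Segal condition \eqref{eq:segal}, applied to the two triangulations of this local quadrilateral, furnishes a canonical equivalence of the two local limits, both identified with $X_3$ via $X_3 \simeq X_2 \times_{X_1} X_2$; since the limit is computed over the unchanged remainder of the diagram, this yields a canonical equivalence $X^{\Delta}_{(S,M)} \simeq X^{\Delta'}_{(S,M)}$.

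To upgrade these pairwise equivalences to a single object $X_{(S,M)}$ with a canonical map to every $X^{\Delta}_{(S,M)}$, I would organize all triangulations and flips into a category $\mathcal{T}(S,M)$ (the flip, or arc, complex) and assemble the $X^{\Delta}_{(S,M)}$ into one functor $\mathcal{T}(S,M) \to \C$. The coherence required to define this functor — that different flip-sequences joining the same pair of triangulations induce the same equivalence up to coherent homotopy — is governed precisely by the \emph{higher} $2$-Segal conditions: the relations among flips are generated by the pentagon and its higher analogues, which correspond to the polygon subdivisions encoded by the squares \eqref{eq:segal} for all $n$, the very combinatorics that produces MacLane's pentagon in Section \ref{sec:hallmonoidal}. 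For stable $(S,M)$ the nerve of $\mathcal{T}(S,M)$ is contractible (contractibility of the relevant arc complex), so the limit of this functor is well defined and canonically equivalent to each $X^{\Delta}_{(S,M)}$; I define $X_{(S,M)}$ to be this limit, which supplies the asserted canonical isomorphism for every triangulation.

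Finally, the mapping class group $\Gamma = \on{MCG}(S,M)$ acts on $\mathcal{T}(S,M)$ by permuting triangulations and flips. Passing to the homotopy quotient and using that the nerve of $\mathcal{T}(S,M)$ is $\Gamma$-equivariantly contractible, functoriality of the construction equips $X_{(S,M)}$ with a coherent $\Gamma$-action by equivalences in $\C$. I expect the coherence step to be the main obstacle: verifying that the flip-induced equivalences organize into one genuine $\infty$-functor on $\mathcal{T}(S,M)$, rather than a merely pairwise-compatible system, is where the full strength of the $2$-Segal hypothesis (all higher polygon subdivisions, not only the associativity-level ones) is indispensable, and where the contractibility of the arc complex for stable surfaces must be invoked both to rigidify the choices and to produce the $\Gamma$-equivariance. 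The cyclic structure is what keeps everything compatible with orientations around each interior vertex, whose link is a cycle rather than a linearly ordered chain.
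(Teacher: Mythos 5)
The paper does not actually prove Theorem \ref{dy-ka-thm2}: it is quoted from \cite{dk-triangulated}, and all that Section \ref{subsec:triangulated} supplies is the heuristic motivation. Your sketch reproduces the broad shape of the argument in that reference (per-triangulation limits, flip-invariance from the lowest $2$-Segal square, contractibility of the arc complex, mapping class group equivariance), but it defers precisely the two points where the real work happens.

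First, the step you yourself call ``the main obstacle'' is a genuine gap, and the cited proof is organized so as to avoid it rather than solve it. You propose to define $X^{\Delta}_{(S,M)}$ for each triangulation and then assemble these into a functor on a category $\mathcal{T}(S,M)$ of triangulations and flips. But to use contractibility of the nerve of $\mathcal{T}(S,M)$ you must already possess the functor, and building an $\infty$-functor out of a flip complex requires an infinite tower of coherence data among the flip equivalences; the pentagon relation is only the first layer, and neither simple connectivity nor contractibility of the flip complex manufactures the higher data for you. The argument in \cite{dk-triangulated} inverts the logic: $X_{(S,M)}$ is \emph{defined} from the outset as a limit over a triangulation-independent category of cyclic membranes $\Lambda^n \to (S,M)$ (isotopy classes of maps of cyclically ordered polygons into the surface), on which the mapping class group acts tautologically, and the $2$-Segal conditions are used to prove a cofinality statement identifying this limit with the finite limit subordinate to any chosen triangulation. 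That is also what the notation $\lim_{\Lambda^n \to \Delta(S,M)} X_n$ in the statement encodes: the limit runs over all $n$, not only over the cells of dimension $\le 2$ as in your index category $\I_\Delta$, and identifying the two limits is itself an application of the $2$-Segal property rather than a definition.

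Second, your local Pachner-flip argument implicitly assumes that the quadrilateral around an interior edge is embedded with four distinct vertices. For stable surfaces with few marked points (the once-marked torus, for instance) triangles are self-glued and the ``square'' is only the image of a map from a standard cyclic $3$-simplex; one needs the membrane formalism to make the identification $X_2 \times_{X_1} X_2 \simeq X_3 \simeq X_2 \times_{X_1} X_2$ apply in that generality. Neither objection says your picture is wrong---it is the intended picture---but as written the proposal postpones exactly the coherence and non-embeddedness issues that the actual proof is designed to handle.
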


An application of the theorem to the $S_{\bullet}$-construction of a pretriangulated differential
$\Zt$-graded category $T$ yields the surface invariants predicted heuristically in Section
\ref{subsec:triangulated}. As a remarkable feature of the proof, note that we can give a universal
variant of this construction by applying the theorem to the cocyclic object $\F^{\bullet}$. As a
result, we obtain, for every stable oriented marked surface $(S,M)$ a dg category
\[
	\F^{(S,M)}
\]
which, for every triangulation $\Delta(S,M)$, comes equipped with a universal surface Postnikov
system. We give some examples:

\subsection{Examples}

\subsubsection{Boundary-marked disk}
Consider the marked surface
  \[
  (S,M) = 
\begin{tikzpicture}[baseline=0,>=latex,scale=0.5, decoration={
    markings,
    mark=at position 0.55 with {\arrow{>}}}]
\begin{scope}

	\fill[fill opacity=0.4, fill=blue!50] (0:2cm) arc (0:360:2cm);
	\draw[thick] (0:2cm) arc (0:360:2cm);
	\fill (-30:2cm) circle [radius=0.2cm];
	\fill (0:2cm) circle [radius=0.2cm];
	\fill (30:2cm) circle [radius=0.2cm];
	\fill (60:2cm) circle [radius=0.2cm];

	\draw[postaction={decorate}] (0:2cm) arc (0:360:2cm);

	\node[anchor=south] at (90:2cm) {\scriptsize $\cdots$};
	\node[anchor=west] at (0:2cm) {\scriptsize $0$};
	\node[anchor=west] at (30:2cm) {\scriptsize $1$};
	\node[anchor=south west] at (60:2cm) {\scriptsize $2$};
	\node[anchor=west] at (-30:2cm) {\scriptsize $n$};

\end{scope}
\end{tikzpicture}
\]
given by a disk with $n$ marked points on its boundary so that, by construction, we have
\[
	\F^{(S,M)} \simeq \F^n \subset \on{MF}^{\ZZ/(n+1)}(\k[z], z^{n+1}).
\]
The universal Postnikov system in $\F^{(S,M)}$ corresponding to the triangulation with all edges starting in 
$0$ (cf. \eqref{eq:postnikov}) is given by 
\[
	\xymatrix@=1ex{[0,1] \ar[rr] & & [0,2]\ar[dl] \ar[rr] & & [0,3]\ar[dl] \ar[r]& \cdots\ar[r]&
		[0,n-1]\ar[rr] & & [0,n]\ar[dl]\\
		& [1,2]\ar[ul]^{+1} & & [2,3] \ar[ul]^{+1} & & &&[n-1,n] \ar[ul]^{+1} & }
\]
The action of the mapping class group $\on{Mod}(S,M) \cong \ZZ/(n+1)$ is given by shifting the
$\ZZ/(n+1)$-grading and is part of the cocyclic structure on $\F^{\bullet}$.

\subsubsection{Disk with two marked points}
  \[
  (S,M) = 
\begin{tikzpicture}[baseline=0,>=latex,scale=0.5, decoration={
    markings,
    mark=at position 0.55 with {\arrow{>}}}]
\begin{scope}

	\fill[fill opacity=0.4, fill=blue!50] (0:2cm) arc (0:360:2cm);
	\draw[thick] (0:2cm) arc (0:360:2cm);
	\fill (0:0cm) circle [radius=0.2cm];
	\fill (0:2cm) circle [radius=0.2cm];

	\draw[postaction={decorate}] (0:2cm) arc (0:360:2cm);
	\draw[postaction={decorate}] (0:0cm) -- (0:2cm);

	\node[anchor=east] at (0:0cm) {\scriptsize $0$};
	\node[anchor=south east] at (0:2cm) {\scriptsize $2$};
	\node[anchor=north east] at (0:2cm) {\scriptsize $1$};

	\node at (0:8cm) {$\leadsto\quad \F^{(S,M)} = \on{D}^b(\on{coh} \mathbb A^1)^{(2)}$};
\end{scope}
\end{tikzpicture}
\]
The universal Postnikov system in $\F^{(S,M)}$, corresponding to the indicated triangulation, is given by 
\[
\begin{tikzpicture}[>=latex, scale=1.0, baseline=(current  bounding  box.center)]
\begin{scope}[scale=1.4]
\coordinate (A0) at (0,0);
\coordinate (A1) at (0.5,0.8);
\coordinate (A2) at (1,0);

\path[fill opacity=0.4, fill=blue!50] (A0) -- (A2) -- (A1) -- cycle;

\begin{scope}[decoration={
    markings,
    mark=at position 0.55 with {\arrow{>}}}
    ] 
\draw[postaction={decorate}] (A0) -- (A1);
\draw[postaction={decorate}] (A1) -- (A2);
\draw[postaction={decorate}] (A0) -- (A2);
\end{scope}

{\scriptsize
\draw (0.25,0.5) node[anchor=east] {$X$};
\draw (0.75,0.5) node[anchor=west] {$Y$};
\draw (0.5,0) node[anchor=north] {$X$};
\draw (A0) node[anchor=east] {$0$};
\draw (A1) node[anchor=south] {$1$};
\draw (A2) node[anchor=west] {$2$};
}
\draw (2,0.5) node {$\leadsto \quad$};
\end{scope}
\begin{scope}[scale=1.5, xshift=2.7cm, yshift=0.2cm]
	\small
	\node (A0) at (0,0.5){$\k[x]$};
	\node (A1) at (1,0.5){$\k$};
	\node (B) at (0.5,-0.2){$\k[x]$};
	\node (st2) at (0.1,0.1){$x$};

	\draw[->] (A0) -- (B);
\draw[->] (B) -- (A1);
\draw[->] (A1) -- (A0);

{\tiny
	\draw (0.5,0.5) node[anchor=south] {$+1$};
}
\end{scope}
\end{tikzpicture}
\]
The mapping class group $\on{Mod}(S,M)$ is trivial.

\subsubsection{Annulus with two marked points}
  \[
  (S,M) = 
\begin{tikzpicture}[baseline=0,>=latex,scale=0.5, decoration={
    markings,
    mark=at position 0.55 with {\arrow{>}}}]
\begin{scope}

	\fill[fill opacity=0.4, fill=blue!50] (0:1cm) -- (0:3cm)
	arc (0:360:3cm) -- (0:1cm)
	arc (360:0:1cm) -- cycle;

	\draw[thick] (0:3cm) arc (0:360:3cm);
	\draw[thick] (0:1cm) arc (0:360:1cm);
	\fill (0:1cm) circle [radius=0.2cm];
	\fill (0:3cm) circle [radius=0.2cm];

	\draw[postaction={decorate}] (0:1cm) arc (0:-360:1cm);
	\draw[postaction={decorate}] (0:1cm) -- (0:3cm);
	\draw[domain=0:6.28,samples=200,smooth] plot (canvas polar
	cs:angle=\x r,radius={3cm - 2cm*sin(0.25*\x r )});
	\draw[postaction={decorate}] (0:3cm) arc (0:360:3cm);

	\node[anchor=south west] at (0:1cm) {\scriptsize $1$};
	\node[anchor=south east] at (0:3cm) {\scriptsize $2$};
	\node[anchor=north east] at (0:1cm) {\scriptsize $0$};
	\node[anchor=north west] at (0:1cm) {\scriptsize $0'$};
	\node[anchor=north east] at (0:3cm) {\scriptsize $2'$};
	\node[anchor=south west] at (0:3cm) {\scriptsize $1'$};

	\node at (0:10cm) {$\leadsto\quad \F^{(S,M)} = \on{D}^b(\on{coh} \mathbb P^1)^{(2)}$};
\end{scope}
\end{tikzpicture}
\]

The universal Postnikov system in $\F^{(S,M)}$, corresponding to the indicated triangulation, is given by 
\[
\begin{tikzpicture}[>=latex, scale=1.0, baseline=(current  bounding  box.center)]
\begin{scope}[scale=1.4]
\coordinate (A0) at (0,0);
\coordinate (A1) at (0.5,0.8);
\coordinate (A2) at (1,0);

\path[fill opacity=0.4, fill=blue!50] (A0) -- (A2) -- (A1) -- cycle;

\begin{scope}[decoration={
    markings,
    mark=at position 0.55 with {\arrow{>}}}
    ] 
\draw[postaction={decorate}] (A0) -- (A1);
\draw[postaction={decorate}] (A1) -- (A2);
\draw[postaction={decorate}] (A0) -- (A2);
\end{scope}

{\scriptsize
\draw (0.25,0.5) node[anchor=east] {$X$};
\draw (0.75,0.5) node[anchor=west] {$Z'$};
\draw (0.5,0) node[anchor=north] {$Y$};
\draw (A0) node[anchor=east] {$0'$};
\draw (A1) node[anchor=south] {$1'$};
\draw (A2) node[anchor=west] {$2'$};
}
\end{scope}
\begin{scope}[scale=1.4, xshift=2cm]
\coordinate (A0) at (0,0);
\coordinate (A1) at (0.5,0.8);
\coordinate (A2) at (1,0);

\path[fill opacity=0.4, fill=blue!50] (A0) -- (A2) -- (A1) -- cycle;

\begin{scope}[decoration={
    markings,
    mark=at position 0.55 with {\arrow{>}}}
    ] 
\draw[postaction={decorate}] (A0) -- (A1);
\draw[postaction={decorate}] (A1) -- (A2);
\draw[postaction={decorate}] (A0) -- (A2);
\end{scope}

{\scriptsize
\draw (0.25,0.5) node[anchor=east] {$Z$};
\draw (0.75,0.5) node[anchor=west] {$Y$};
\draw (0.5,0) node[anchor=north] {$X$};
\draw (A0) node[anchor=east] {$0$};
\draw (A1) node[anchor=south] {$1$};
\draw (A2) node[anchor=west] {$2$};
}
\draw (2,0.5) node {$\leadsto \quad$};
\end{scope}
\begin{scope}[scale=2, xshift=3.3cm, yshift=-.15cm]
	\small
	\node (B) at (0.5,1) {$\k_0$};
	\node (A0) at (0,0.5) {$\O$};
	\node (A1) at (1,0.5) {$\O(1)$};
	\node (C) at (0.5,0) {$\k_{\infty}$};
	\draw[transform canvas={yshift=0.5ex},->] (A0) -- (A1) node[above,midway] {\tiny $x$};
	\draw[transform canvas={yshift=-0.5ex},->] (A0) -- (A1)node[below,midway] {\tiny $y$};
      \draw[->] (A1) -- (B);
      \draw[->] (B) -- (A0) node[above,midway] {\tiny $+1$};
      \draw[->] (A1) -- (C);
      \draw[->] (C) -- (A0) node[below,midway] {\tiny $+1$};

\end{scope}
\end{tikzpicture}
\]
The generator of $\on{Mod}(S,M) \cong \ZZ$ acts via $- \otimes \O(1)$.

\subsubsection{Sphere with $3$ marked points} 
  \[
    (S,M) = 
  \begin{tikzpicture}[scale=1.3,baseline=0,>=latex, decoration={
    markings,
    mark=at position 0.55 with {\arrow{>}}}]
    \draw (-1,0) arc (180:360:1cm and 0.5cm);
    \draw[dashed] (-1,0) arc (180:0:1cm and 0.5cm);
    \draw (0,0) circle (1cm);
    \shade[ball color=blue!50,opacity=0.20] (0,0) circle (1cm);
    \node at (3.9,0) {$\leadsto \; \F^{(S,M)} = \on{D}^b(\on{coh} \underbrace{\k[x,y]/(xy)}_{S} )^{(2)}$};
	\fill (-1,0) circle [radius=0.04cm];
	\fill (0,-0.5) circle [radius=0.04cm];
	\fill (1,0) circle [radius=0.04cm];
	{\scriptsize
      \draw (-1,0) node[anchor=south east] {$2'$};
      \draw (0,-0.5) node[anchor=south] {$1'$};
      \draw (1,0) node[anchor=south west] {$0'$};
      \draw (-1,0) node[anchor=north east] {$0$};
      \draw (0,-0.5) node[anchor=north] {$1$};
      \draw (1,0) node[anchor=north west] {$2$};
    }
\end{tikzpicture}
\]

The universal Postnikov system is given by
\[
\begin{tikzpicture}[>=latex, scale=1.0, baseline=(current  bounding  box.center)]
\begin{scope}[scale=1.4]
\coordinate (A0) at (0,0);
\coordinate (A1) at (0.5,0.8);
\coordinate (A2) at (1,0);

\path[fill opacity=0.4, fill=blue!50] (A0) -- (A2) -- (A1) -- cycle;

\begin{scope}[decoration={
    markings,
    mark=at position 0.55 with {\arrow{>}}}
    ] 
\draw[postaction={decorate}] (A0) -- (A1);
\draw[postaction={decorate}] (A1) -- (A2);
\draw[postaction={decorate}] (A0) -- (A2);
\end{scope}

{\scriptsize
\draw (0.25,0.5) node[anchor=east] {$X$};
\draw (0.75,0.5) node[anchor=west] {$Z$};
\draw (0.5,0) node[anchor=north] {$Y$};
\draw (A0) node[anchor=east] {$0'$};
\draw (A1) node[anchor=south] {$1'$};
\draw (A2) node[anchor=west] {$2'$};
}
\end{scope}
\begin{scope}[scale=1.4, xshift=2cm]
\coordinate (A0) at (0,0);
\coordinate (A1) at (0.5,0.8);
\coordinate (A2) at (1,0);

\path[fill opacity=0.4, fill=blue!50] (A0) -- (A2) -- (A1) -- cycle;

\begin{scope}[decoration={
    markings,
    mark=at position 0.55 with {\arrow{>}}}
    ] 
\draw[postaction={decorate}] (A0) -- (A1);
\draw[postaction={decorate}] (A1) -- (A2);
\draw[postaction={decorate}] (A0) -- (A2);
\end{scope}

{\scriptsize
\draw (0.25,0.5) node[anchor=east] {$\Sigma Z$};
\draw (0.75,0.5) node[anchor=west] {$\Sigma X$};
\draw (0.5,0) node[anchor=north] {$\Sigma Y$};
\draw (A0) node[anchor=east] {$0$};
\draw (A1) node[anchor=south] {$1$};
\draw (A2) node[anchor=west] {$2$};
}
\draw (2,0.5) node {$\leadsto \quad$};
\end{scope}
\begin{scope}[scale=2, xshift=3.3cm, yshift=.1cm]
	\small
	\node (A0) at (0,0.5){$S/(x)$};
	\node (A1) at (1,0.5){$S/(y)$};
	\node (B) at (0.5,-0.2){$S$};

  \draw[transform canvas={xshift=0.5ex},->] (A0) -- (B) node[right,midway] {\tiny $y$};
  \draw[transform canvas={xshift=-0.5ex},->] (B) -- (A1);
  \draw[transform canvas={yshift=-0.5ex},->] (A1) -- (A0) node[below,midway] {\tiny $+1$};
  \draw[transform canvas={xshift=-0.5ex},->] (B) -- (A0); 
  \draw[transform canvas={xshift=+0.5ex},->] (A1) -- (B) node[right,midway] {\tiny $x$};
  \draw[transform canvas={yshift=+0.5ex},->] (A0) -- (A1)node[above,midway] {\tiny $+1$};

\end{scope}
\end{tikzpicture}
\]
There is an action of $\on{Mod}(S,M) \cong S_3$ on $\F^{(S,M)}$ which permutes the objects $\Sigma
S$, $S/(x)$ and $S/(y)$.

\begin{rem} As a final remark, we conclude by mentioning an interpretation of the dg categories
	$\F^{(S,M)}$ as purely topological Fukaya categories which is due to Kontsevich
	\cite{kontsevich-fukaya}. In this context, the state sum formula given by the limit in
	Theorem \ref{dy-ka-thm2} can then be regarded as implementing a $2$-dimensional instance of
	Kontsevich's proposal on localizing the Fukaya category along a singular Lagrangian spine
	(given in our context as the dual graph of the chosen triangulation). 
\end{rem}

\bibliographystyle{halpha} 
\bibliography{biblio}

\end{document}